
\documentclass[10pt,oneside,leqno]{amsart}
\usepackage{amsxtra}
\usepackage{amsopn}
\usepackage{color}
\usepackage{amsmath,amsthm,amssymb}
\usepackage{amscd}
\usepackage{amsfonts}
\usepackage{latexsym}
\usepackage{verbatim}

\theoremstyle{plain}
\newtheorem{theorem}{Theorem}[section]
\newtheorem{definition}[theorem]{Definition}
\newtheorem{lemma}[theorem]{Lemma}
\newtheorem{proposition}[theorem]{Proposition}
\newtheorem{corollary}[theorem]{Corollary}
\newtheorem{remark}[theorem]{Remark}

\newtheorem{example}[theorem]{Example}
\newtheorem{question}[theorem]{Question}
\newtheorem{remark-question}[section]{Remark-Question}
\newtheorem{conjecture}[section]{Conjecture}

\newcommand\C{{\mathbb C}}

\newcommand\R{{\mathbb R}}


\newcommand\frg{{\mathfrak g}}
\newcommand\frh{{\mathfrak h}}

\newcommand\gc{\frg_\mathbb{C}}
\newcommand\Real{{\mathfrak R}{\frak e}\,} 
\newcommand\Imag{{\mathfrak I}{\frak m}\,}
\newcommand\nilm{\Gamma\backslash G}

\newcommand\db{{\bar{\partial}}}
\newcommand\zzz{{\!\!\!}}

\textheight=20.6cm

\begin{document}
\title[]{Invariant complex structures on 6-nilmanifolds: classification, Fr\"olicher spectral sequence and special Hermitian metrics}


\author{M. Ceballos}
\address[M. Ceballos]{Departamento de Geometr\'{\i}a y Topolog\'{\i}a\\
 Universidad de Sevilla\\
Fa\-cul\-tad de Matem\'aticas. Apartado 1160\\
41080 Sevilla, Spain}
\email{mceballos@us.es}

\author{A. Otal}
\address[A. Otal and R. Villacampa]{Centro Universitario de la Defensa\,-\,I.U.M.A., Academia General
Mili\-tar, Crta. de Huesca s/n. 50090 Zaragoza, Spain}
\email{aotal@unizar.es}
\email{raquelvg@unizar.es}

\author{L. Ugarte}
\address[L. Ugarte]{Departamento de Matem\'aticas\,-\,I.U.M.A.\\
Universidad de Zaragoza\\
Campus Plaza San Francisco\\
50009 Zaragoza, Spain}
\email{ugarte@unizar.es}

\author{R. Villacampa}


\maketitle

\begin{abstract}
We classify invariant complex structures on 6-dimensional nilmanifolds up to equivalence.
As an application, the behaviour of the associated Fr\"olicher sequence is studied
as well as its relation to the
existence of strongly Gauduchon metrics.
We also show that the strongly Gauduchon property and the balanced property are not closed under
holomorphic deformation.
\end{abstract}


\section{Introduction}\label{intro}

\noindent
Let $\frg$ be a Lie algebra endowed with an endomorphism $J\colon \frg\longrightarrow \frg$
such that $J^2=-{\rm Id}$. The endomorphism $J$ is a \emph{complex structure}
if the integrability condition
$$
[JX,JY]=J[JX,Y]+J[X,JY]+[X,Y]
$$
is satisfied for any $X,Y\in \frg$;
equivalently, the $i$-eigenspace $\frg_{1,0}$ of $J$ in $\gc=\frg\otimes_{\mathbb{R}}\mathbb{C}$ is a complex subalgebra of $\gc$.
Nilpotent Lie algebras $\frg$ admitting a complex structure were classified by Salamon \cite{S} up to dimension 6.
More recently, Andrada, Barberis and Dotti classified in \cite{ABD} the 6-dimensional Lie algebras $\frg$ having
a complex structure $J$ of abelian type, that is, the complex subalgebra $\frg_{1,0}$ is abelian, or equivalently
$[JX,JY]=[X,Y]$ for any $X,Y\in\frg$.

A related question is to determine the complex structures on a given Lie algebra $\frg$ up
to isomorphism in the following sense. Two complex structures $J$
and $J'$ on $\frg$ are \emph{equivalent} if there exists an automorphism
$F\colon \frg\longrightarrow\frg$ of the Lie algebra such that
$J=F^{-1}\circ J'\circ F$. The latter condition is equivalent to say that~$F$,
extended to $\gc$, satisfies $F(\frg_{1,0}^J) \subset \frg_{1,0}^{J'}$.
If $\mathcal{C}(\frg)$ denotes the space of complex structures on $\frg$ then $\mathcal{C}(\frg)/{\rm Aut}(\frg)$ parametrizes the equivalence classes
of complex structures on $\frg$.

A classification of abelian complex structures in dimension 6 is given in \cite{ABD}.
Some partial results on nilpotent Lie algebras can be found in several papers \cite{CP,KS,U,UV1},
although to our knowledge there is no complete classification of complex structures on 6-dimensional
nilpotent Lie algebras. This is our first goal here.

The classification of complex structures on
nilpotent Lie algebras provides a classification of \emph{invariant} complex structures on
nilmanifolds. Let $M=\nilm$ be a nilmanifold, i.e. a compact quotient of a
simply-connected nilpotent Lie group $G$ by a lattice $\Gamma$ of
maximal rank. If $J$ is a complex structure on the Lie algebra $\frg$ of $G$,
then it gives rise to a left-invariant complex structure on $G$ which descends
to a complex structure on the quotient $M$ in a natural way.
Several interesting aspects of this complex geometry have been investigated, as for instance
the Dolbeault cohomology~\cite{CF,CFGU2,R1}, complex deformations~\cite{CP,CFP,MPPS,R2} or
the existence of special Hermitian metrics~\cite{FPS,U}. Recently, it is proved in~\cite{BDV} that the
canonical bundle of any complex nilmanifold is holomorphically trivial
and some applications to hypercomplex geometry are given.

As a first application of the classification of complex structures
we study the behaviour of the Fr\"olicher sequence~\cite{Fro}.
Recall that the Fr\"olicher sequence $E_r(M, J)$ of a complex manifold $(M, J)$ is the
spectral sequence associated to the double complex $(\Omega^{p,q}(M,J),\partial,\db)$, where
$\partial+\db=d$ is the decomposition, with respect to $J$, of the exterior differential $d$.
The first term $E_1(M,J)$ is precisely the Dolbeault cohomology of~$(M,J)$ and after
a finite number of steps the sequence converges to
the de Rham cohomology of $M$.
The first examples of compact complex manifolds for which $E_2\not\cong E_{\infty}$ were independently
found in~\cite{CFG1} and~\cite{Pittie}. The examples in~\cite{CFG1} are complex nilmanifolds of complex dimension~3,
which is the lowest possible dimension for which the Fr\"olicher
sequence can be non-degenerate at $E_2$.
More recently, Rollenske has constructed in~\cite{R0} complex nilmanifolds for which the sequence $\{E_r\}$
can be arbitrarily non-degenerate. The behaviour of the Fr\"olicher sequence has been studied for some other
complex manifolds~\cite{ES,Tanre}, but as far as we know its general behaviour
for complex nilmanifolds has not been studied, although some partial results can be found in~\cite{CFG2,CFGU0,CFGU1bis}.
Here we study the Fr\"olicher spectral sequence for general invariant complex structures on a 6-dimensional nilmanifold.
A remarkable consequence of this study is the existence of a compact complex manifold on which the $\partial\db$-lemma fails but
$E_1\cong E_{\infty}$ and the Hodge diamond is symmetric.

As a second application of the classification of complex structures
we consider strongly Gauduchon (sG for short) metrics in the sense of Popovici~\cite{Pop0,Pop1}.
Any balanced Hermitian metric is sG and any sG metric is a Gauduchon metric~\cite{Gau}.
In~\cite{Pop2} the relation between the degeneration of the Fr\"olicher sequence at $E_1$ and the existence
of sG metrics is studied, showing that these two notions are unrelated.
We study the existence of sG or balanced metrics on 6-nilmanifolds in relation to the general
behaviour of the Fr\"olicher sequence.
Moreover, Popovici proved in \cite{Pop1} that the sG property of compact complex manifolds is open under
holomorphic deformations, and conjectured in~\cite{Pop2} that the sG property and the balanced property
of compact complex manifolds are closed under holomorphic deformations.
We construct a counterexample to both closedness conjectures.

The paper is structured as follows. In Section~\ref{ComplexStructures} we first review some general facts about complex structures
on a 6-dimensional nilpotent Lie algebra $\frg$. By \cite{S} such $\frg$ must be isomorphic to $\frh_1,\ldots,\frh_{16}$,
$\frh_{19}^-$ or $\frh_{26}^+$ (see Theorem~\ref{clasif-complex} for a description of the Lie algebras).
Of special interest is $\frh_5$ because it corresponds to the real Lie algebra underlying the Iwasawa manifold,
whose complex geometry is studied in~\cite{KS}.
For the first sixteen classes the complex structure is necessarily of \emph{nilpotent} type
in the sense of \cite{CFGU2}.
We classify the non-abelian nilpotent complex structures on 2-step and 3-step nilpotent Lie algebras in Sections~\ref{2step}
and~\ref{3step}, respectively. Then, using the classification of non-nilpotent complex structures obtained in~\cite{UV1} as well as
the classification of abelian structures given in~\cite{ABD}, we present in Tables 1 and~2 of
Section~\ref{ClassifGeneral} the complete classification of complex structures
on 6-dimensional nilpotent Lie algebras up to equivalence.

Since $J$ equivalent to $J'$ implies that the terms in the associated Fr\"olicher sequences are isomorphic, as an application
we study the general behaviour of the Fr\"olicher sequence $E_r(\nilm,J)$ in
Section~\ref{Frolicher} (see Theorem~\ref{Fro-general} for details).
We find that $E_2\not\cong E_{\infty}$ if and only if the underlying Lie algebra $\frg\cong \frh_{13},\frh_{14}$ or $\frh_{15}$.
Moreover, $E_1\cong E_2\not\cong E_3 \cong E_{\infty}$ for any $J$ when $\frg\cong \frh_{13}$ or $\frh_{14}$.
In contrast, $\frh_{15}$ has a rich complex geometry with respect to Fr\"olicher sequence
because it admits complex structures for which
$E_1\not\cong E_2\cong E_{\infty}$,
$E_1\cong E_2\not\cong E_3 \cong E_{\infty}$
or even $E_1\not\cong E_2\not\cong E_3 \cong E_{\infty}$.
In Example~\ref{example-h15} we give a
family $J_t$ of non-equivalent complex structures on $\frh_{15}$ along which the Fr\"olicher sequence
has these three behaviours.
We also show that a nilmanifold with underlying Lie algebra $\frh_6$ has a complex structure with degenerate Fr\"olicher sequence and
satisfying $h_{\db}^{p,q}=h_{\db}^{q,p}$ for every $p,q\in \mathbb{N}$,
which provides an answer to a question recently posed in \cite{AT}
(see Proposition~\ref{h6-question}).

In section~\ref{sG} we study the existence of sG metrics on 6-dimensional nilmanifolds endowed with an invariant complex structure
and show that the underlying Lie algebra must be isomorphic to $\frh_1,\ldots,\frh_6$ or $\frh_{19}^-$. It is also proved that
the existence of sG metric implies the degeneration of the Fr\"olicher sequence at $E_2$.
Using~\cite{UV2} we give in Proposition~\ref{examples-sG-nobalanced} a classification of complex structures
having sG metrics but not admitting any balanced metric, as well as a
classification of nilpotent complex structures admitting balanced metric (see Table~3).
Based on the complex geometry of the Lie algebra $\frh_4$, in Theorem~\ref{counterexample} we show that neither the sG property
nor the balanced property of compact complex manifolds are closed under holomorphic deformation.

\section{Nilpotent complex structures on 6-dimensional\\ nilpotent Lie algebras}\label{ComplexStructures}

\noindent
Given a Lie algebra $\frg$, let $\gc^*$ be the dual of the complexification $\gc$ of $\frg$.
If $J\colon \frg \longrightarrow \frg$ is an endomorphism such that
$J^2=-{\rm Id}$, then there is a natural bigraduation induced on $\bigwedge^* \,\gc^* =\oplus_{p,q}
\bigwedge^{p,q}(\frg^*)$, where the spaces $\bigwedge^{1,0}(\frg^*)$
and $\bigwedge^{0,1}(\frg^*)$, which we shall also denote by
$\frg^{1,0}$ and $\frg^{0,1}$, are the eigenspaces of the
eigenvalues $\pm i$ of $J$ as an endomorphism of~$\gc^*$,
respectively.
Now, if $d\colon \bigwedge^* \gc^* \longrightarrow \bigwedge^{*+1}
\gc^*$ is the extension to the complexified exterior algebra of the
usual Chevalley-Eilenberg differential, then it is well known that $J$ is
a complex structure if and only if $\pi_{0,2} \circ d\vert_{\frg^{1,0}}
\equiv 0$, where $\pi_{0,2}\colon \bigwedge^{2} \gc^*
\longrightarrow \bigwedge^{0,2}(\frg^*)$ denotes the canonical projection.

We shall focus on \emph{nilpotent} Lie algebras (NLA for short).
Salamon has proved in~\cite{S} the following equivalent condition for
the integrability of $J$ on a $2n$-dimensional NLA $\frg$: $J$ is a complex
structure on $\frg$ if and only if $\frg^{1,0}$ has a basis
$\{\omega^j\}_{j=1}^n$ such that $d\omega^1=0$ and
$$
d \omega^{j} \in \mathcal{I}(\omega^1,\ldots,\omega^{j-1}), \quad
\mbox{ for } j=2,\ldots,n ,
$$
where $\mathcal{I}(\omega^1,\ldots,\omega^{j-1})$ is the ideal in
$\bigwedge\phantom{\!}^* \,\gc^*$ generated by
$\{\omega^1,\ldots,\omega^{j-1}\}$.

Recall that a complex structure $J$ on a $2n$-dimensional NLA $\frg$ is
\emph{nilpotent}~\cite{CFGU2} if there exists a basis $\{\omega^j\}_{j=1}^n$
for~$\frg^{1,0}$ satisfying $d\omega^1=0$ and
\begin{equation}\label{nilpotent-condition}
d \omega^j \in \bigwedge\phantom{\!\!}^2 \,\langle
\omega^1,\ldots,\omega^{j-1},
\omega^{\overline{1}},\ldots,\omega^{\overline{j-1}} \rangle, \quad
\mbox{ for } j=2,\ldots,n .
\end{equation}
An important special class of nilpotent complex structures is the
\emph{abelian} class consisting of those structures $J$ satisfying $[JX,JY]=[X,Y]$, for all $X,Y\in
\frg$, or equivalently $d(\frg^{1,0})\subset \bigwedge^{1,1}(\frg^*)$. They are also characterized by the fact that
the subalgebra $\frg^{1,0}$ is abelian.

In six dimensions, the classification of NLAs in
terms of the different types of complex structures that they admit
is as follows.

\begin{theorem}\label{clasif-complex}\cite{S,U}
Let $\frg$ be an NLA of dimension~$6$. Then, $\frg$ has a complex
structure if and only if it is isomorphic to one of the following
Lie algebras:
$$
\begin{array}{rcl}
\frh_{1} &\!\!=\!\!& (0,0,0,0,0,0),\\[-2pt]
\frh_{2} &\!\!=\!\!& (0,0,0,0,12,34),\\[-2pt]
\frh_{3} &\!\!=\!\!& (0,0,0,0,0,12+34),\\[-2pt]
\frh_{4} &\!\!=\!\!& (0,0,0,0,12,14+23),\\[-2pt]
\frh_{5} &\!\!=\!\!& (0,0,0,0,13+42,14+23),\\[-2pt]
\frh_{6} &\!\!=\!\!& (0,0,0,0,12,13),\\[-2pt]
\frh_{7} &\!\!=\!\!& (0,0,0,12,13,23),\\[-2pt]
\frh_{8} &\!\!=\!\!& (0,0,0,0,0,12),\\[-2pt]
\frh_{9} &\!\!=\!\!& (0,0,0,0,12,14+25),
\end{array}
\quad\quad\quad
\begin{array}{rcl}
\frh_{10} &\!\!=\!\!& (0,0,0,12,13,14),\\[-2pt]
\frh_{11} &\!\!=\!\!& (0,0,0,12,13,14+23),\\[-2pt]
\frh_{12} &\!\!=\!\!& (0,0,0,12,13,24),\\[-2pt]
\frh_{13} &\!\!=\!\!& (0,0,0,12,13+14,24),\\[-2pt]
\frh_{14} &\!\!=\!\!& (0,0,0,12,14,13+42),\\[-2pt]
\frh_{15} &\!\!=\!\!& (0,0,0,12,13+42,14+23),\\[-2pt]
\frh_{16} &\!\!=\!\!& (0,0,0,12,14,24),\\[-2pt]
\frh^-_{19} &\!\!=\!\!& (0,0,0,12,23,14-35),\\[-2pt]
\frh^+_{26} &\!\!=\!\!& (0,0,12,13,23,14+25).
\end{array}
$$
Moreover:
\begin{enumerate}
\item[{\rm ({\it a})}] Any complex structure on $\frh_{19}^-$ and
$\frh_{26}^+$ is non-nilpotent;
\item[{\rm ({\it b})}] For $1\leq k\leq 16$, any complex structure
on $\frh_{k}$ is nilpotent;
\item[{\rm ({\it c})}] Any complex structure on $\frh_1$,
$\frh_{3}$, $\frh_{8}$ and $\frh_{9}$ is abelian;
\item[{\rm ({\it d})}] There exist both abelian and non-abelian
nilpotent complex structures on $\frh_{2}$, $\frh_{4}$, $\frh_{5}$
and $\frh_{15}$;
\item[{\rm ({\it e})}] Any complex structure on $\frh_{6}$,
$\frh_{7}$, $\frh_{10}$, $\frh_{11}$, $\frh_{12}$, $\frh_{13}$,
$\frh_{14}$ and $\frh_{16}$ is not abelian.
\end{enumerate}
\end{theorem}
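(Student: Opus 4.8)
The plan is to derive the statement from Salamon's integrability criterion recalled above, together with the adapted ascending series description of nilpotency and the (known) classification of abelian complex structures. For the existence assertion I would start from the list of $6$-dimensional real NLAs and apply Salamon's criterion: such a $\frg$ carries a complex structure if and only if $\frg^{1,0}$ admits a basis $\{\omega^1,\omega^2,\omega^3\}$ with $d\omega^1=0$, $d\omega^2=\omega^1\wedge\alpha$ and $d\omega^3=\omega^1\wedge\beta_1+\omega^2\wedge\beta_2$ for some $\alpha,\beta_1,\beta_2\in\gc^*$, subject to the reality constraint that the real Lie algebra determined by $\{\Real\omega^j,\Imag\omega^j\}$ be isomorphic to $\frg$. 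Solving these equations class by class --- the bulk of the work, carried out in \cite{S} --- leaves exactly the eighteen algebras listed; in particular, of the two sign families only $\frh_{19}^-$ and $\frh_{26}^+$ survive.

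For part (a) the key remark is that both $\frh_{19}^-$ and $\frh_{26}^+$ have one-dimensional centre $Z(\frg)=\langle e_6\rangle$, as is immediate from the bracket relations in the footnote. A one-dimensional real subspace contains no nonzero $J$-invariant subspace, so the first term $\frg^J_1=Z(\frg)\cap J\,Z(\frg)$ of the ascending series adapted to any complex structure $J$ vanishes; hence $\frg^J_l=0$ for all $l$ and $J$ is never nilpotent. For part (b) one argues conversely: for $1\le k\le 16$ one shows that Salamon's flag can always be rewritten, after a suitable change of the basis $\{\omega^j\}$, so that $d\omega^j\in\bigwedge^2\langle\omega^1,\dots,\omega^{j-1},\omega^{\overline1},\dots,\omega^{\overline{j-1}}\rangle$, i.e.\ \eqref{nilpotent-condition} holds, equivalently $\frg^J_l=\frg$ for some $l$; this reduction, carried out on each of the surviving normal forms, is the content of \cite{U}.

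For parts (c), (d) and (e), observe first that for a nilpotent $J$ on $\frh_k$ the integrability condition $\pi_{0,2}(d\omega^3)=0$, together with $\omega^1\wedge\omega^1=\omega^{\overline1}\wedge\omega^{\overline1}=0$, reduces \eqref{nilpotent-condition} to
\begin{equation*}
d\omega^1=0,\qquad d\omega^2=\lambda\,\omega^{1\overline1},\qquad d\omega^3=A\,\omega^{12}+B\,\omega^{1\overline1}+C\,\omega^{1\overline2}+D\,\omega^{2\overline1}+E\,\omega^{2\overline2},
\end{equation*}
with $\lambda,A,B,C,D,E\in\C$; in particular $J$ is abelian precisely when the single $(2,0)$-coefficient $A$ vanishes (and complex-parallelizable precisely when $\lambda=B=C=D=E=0$). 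It then remains, for each $k$, to decide which parameter values realise $\frh_k$, by matching isomorphism invariants such as the dimension and nilpotency step of the descending central series and the rank and mutual position of the real $2$-forms $d\Real\omega^3$, $d\Imag\omega^3$. For $\frh_1$ one has $d\equiv 0$; for $\frh_3$ and $\frh_8$ the derived algebra is one-dimensional, so $\mathrm{Im}(d|_{\gc^*})$ is spanned by a single real $2$-form whose $(0,2)$-part --- and hence, by reality, whose $(2,0)$-part --- vanishes by integrability, forcing $d\omega^3$ to have type $(1,1)$ and $A=0$; the case $\frh_9$ is handled similarly, now using that $\omega^{1\overline1}\in\mathrm{Im}(d|_{\gc^*})$ is decomposable. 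This yields (c). For $\frh_6,\frh_7,\frh_{10},\frh_{11},\frh_{12},\frh_{13},\frh_{14},\frh_{16}$ one shows that $A=0$ is impossible: for instance $\mathrm{Im}(d|_{\gc^*})=e^1\wedge\langle e^2,e^3\rangle$ for $\frh_6$, so if $A=0$ this $2$-dimensional space would consist of $J$-invariant (type $(1,1)$) $2$-forms, each of which, being also decomposable, has the form $e^1\wedge v$ with $\langle e^1,v\rangle$ a $J$-invariant plane; two independent such planes meet in the $J$-invariant line $\langle e^1\rangle$, a contradiction, and the remaining seven algebras are treated in the same spirit. This yields (e). Finally (d) follows by exhibiting on each of $\frh_2,\frh_4,\frh_5,\frh_{15}$ one abelian and one non-abelian nilpotent representative --- e.g.\ on $\frh_5$ the complex-parallelizable structure $d\omega^1=d\omega^2=0$, $d\omega^3=\omega^{12}$ of the Iwasawa manifold has $A=1\neq 0$ --- the abelian representatives, and the fact that no algebra outside the lists in (c) and (d) admits an abelian structure, being taken from \cite{S,U,ABD}.

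The conceptual input of this argument is light --- Salamon's criterion, the centre computation for (a), and the reduced equations above --- but the proof is long, the real obstacle being the sheer volume of casework: eliminating the unwanted classes among the $6$-dimensional real NLAs in the existence step, and, for (b) and for (c)--(e), matching whole families of structure constants against isomorphism types. It is precisely this bookkeeping that is imported from \cite{S,U,ABD}.
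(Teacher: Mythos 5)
Your proposal is correct and takes essentially the same route as the paper, which states this theorem with no proof of its own, importing it wholesale from \cite{S,U} (and \cite{ABD} for the abelian refinements) exactly as you do for the existence claim, part (b), and the bulk of (c)--(e). The pieces you do argue in full are sound: the computation $\frg^J_1=Z(\frg)\cap JZ(\frg)=0$ for the one-dimensional centres of $\frh_{19}^-$ and $\frh_{26}^+$ correctly yields (a), the reduced equations with the single $(2,0)$-coefficient $A$ match the paper's Proposition~\ref{J-red}, and the type-$(1,1)$/decomposability arguments for $\frh_3$, $\frh_8$ and $\frh_6$ are valid.
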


\begin{remark}\label{remmm}
{\rm Here we use the usual notation, i.e. for instance
$\frh_{2}=(0,0,0,0,12,34)$ means that there is a basis
$\{e^j\}_{j=1}^6$ satisfying
$d e^1=d e^2=d e^3=d e^4=0$,
$d e^5= e^1\wedge e^2$,
$d e^6= e^3\wedge e^4$; equivalently, the Lie bracket is
given in terms of its dual basis $\{e_j\}_{j=1}^6$ by
$[e_1,e_2]=-e_5$, $[e_3,e_4]=-e_6$.}
\end{remark}

Let $\frg$ be a Lie algebra endowed with two complex structures $J$
and $J'$. We recall that $J$ and $J'$ are said to be {\it
equivalent} if there is an automorphism $F\colon \frg\longrightarrow
\frg$ of the Lie algebra such that $J'=F^{-1}\circ J\circ F$, that
is, $F$ is a linear automorphism such that $F^*\colon
\frg^*\longrightarrow \frg^*$ commutes with the Chevalley-Eilenberg
differential~$d$ and $F$ commutes with the complex structures $J$
and $J'$. The latter condition is equivalent to say that~$F^*$,
extended to the complexified exterior algebra, preserves the
bigraduations induced by $J$ and $J'$.

Notice that if $\frg^{1,0}_J$ and $\frg^{1,0}_{J'}$ denote the
$(1,0)$-subspaces of $\gc^*$ associated to~$J$ and $J'$, then the complex
structures~$J$ and $J'$ are equivalent if and only if
there is a $\mathbb{C}$-linear isomorphism $F^*\colon \frg^{1,0}_J
\longrightarrow \frg^{1,0}_{J'}$ such that $d \circ F^*=F^* \circ
d$.

\smallskip

In dimension 6, by Theorem~\ref{clasif-complex}, if the NLA $\frg$ admits complex structures
then all of them are either nilpotent or non-nilpotent.
The classification of abelian complex structures up to equivalence
is obtained in~\cite{ABD}, whereas the non-nilpotent complex structures
are classified in \cite{UV1} (see Section~\ref{ClassifGeneral} for details).
Therefore, it remains to study the equivalence classes of non-abelian nilpotent complex structures.
In order to provide such classification, our starting point is the following reduction of the nilpotent condition \eqref{nilpotent-condition}.

\begin{proposition}\label{J-red}\cite{U}
Let $J$ be a nilpotent complex structure on an NLA $\frg$ of dimension $6$.
There is a basis
$\{\omega^j\}_{j=1}^3$ for $\frg^{1,0}$ satisfying
\begin{equation}\label{nilpotentJ}
\left\{
\begin{array}{lcl}
d\omega^1 \zzz & = &\zzz 0,\\
d\omega^2 \zzz & = &\zzz \epsilon\, \omega^{1\bar{1}} \, ,\\
d\omega^3 \zzz & = &\zzz \rho\, \omega^{12} + (1-\epsilon)A\,
\omega^{1\bar{1}} + B\, \omega^{1\bar{2}} + C\, \omega^{2\bar{1}} +
(1-\epsilon)D\, \omega^{2\bar{2}},
\end{array}
\right.
\end{equation}
where $A,B,C,D\in \mathbb{C}$ and $\epsilon,\rho \in \{0,1\}$.
\end{proposition}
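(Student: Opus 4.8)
The plan is to start from a basis $\{\omega^j\}_{j=1}^3$ of $\frg^{1,0}$ realizing the nilpotency condition \eqref{nilpotent-condition}, which exists by the very definition of a nilpotent complex structure, and then to reduce it by a sequence of changes of basis that preserve the upper‑triangular shape of the structure equations. First I would record the general form of such a basis. Since $d\omega^1=0$ and $d\omega^2\in\bigwedge^2\langle\omega^1,\omega^{\bar1}\rangle$, and the only nonzero $2$-form there is $\omega^{1\bar1}$, necessarily $d\omega^2=\lambda\,\omega^{1\bar1}$ for some $\lambda\in\mathbb{C}$. Likewise $d\omega^3\in\bigwedge^2\langle\omega^1,\omega^2,\omega^{\bar1},\omega^{\bar2}\rangle$, and the integrability of $J$ (that is, $\pi_{0,2}\circ d\vert_{\frg^{1,0}}\equiv0$) annihilates the $\omega^{\bar1\bar2}$-component, so $d\omega^3=a_{12}\,\omega^{12}+a_{1\bar1}\,\omega^{1\bar1}+a_{1\bar2}\,\omega^{1\bar2}+a_{2\bar1}\,\omega^{2\bar1}+a_{2\bar2}\,\omega^{2\bar2}$ with all coefficients in $\mathbb{C}$.

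Next I normalize the two leading coefficients. If $\lambda\neq0$, replacing $\omega^2$ by $\lambda^{-1}\omega^2$ makes the coefficient of $\omega^{1\bar1}$ in $d\omega^2$ equal to $1$; set $\epsilon=1$ in this case and $\epsilon=0$ if $\lambda=0$. The crucial point occurs when $\epsilon=1$: I impose $d^2=0$ on $\omega^3$. A short computation using $d\omega^2=\omega^{1\bar1}$ and $d\omega^{\bar2}=\overline{\omega^{1\bar1}}=-\omega^{1\bar1}$ shows that $d\omega^{12}=d\omega^{1\bar1}=d\omega^{1\bar2}=d\omega^{2\bar1}=0$ while $d\omega^{2\bar2}=\omega^{1\bar1\bar2}-\omega^{12\bar1}$; since these last two $3$-forms have different bidegrees, $0=d(d\omega^3)=a_{2\bar2}\,(\omega^{1\bar1\bar2}-\omega^{12\bar1})$ forces $a_{2\bar2}=0$. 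This is the source of the factor $(1-\epsilon)$ in front of $D$. Then, because $\omega^{1\bar1}=d\omega^2$, passing from $\omega^3$ to $\omega^3-a_{1\bar1}\omega^2$ removes the $\omega^{1\bar1}$-term from $d\omega^3$ while leaving the remaining coefficients untouched, which accounts for the factor $(1-\epsilon)$ in front of $A$. Finally, if the coefficient of $\omega^{12}$ is nonzero I rescale $\omega^3$ to make it $1$ and put $\rho=1$, otherwise $\rho=0$; renaming the surviving constants as $A,B,C,D$ gives \eqref{nilpotentJ}. Throughout, I would check that none of these moves — rescaling $\omega^2$, rescaling $\omega^3$, or subtracting a multiple of $\omega^2$ from $\omega^3$ — alters $d\omega^1=0$ or pushes $d\omega^2,d\omega^3$ outside $\bigwedge^2\langle\omega^1,\dots,\omega^{j-1},\omega^{\bar1},\dots,\omega^{\bar{j-1}}\rangle$, so that the reduced triple is again an admissible basis of $\frg^{1,0}$.

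I expect the only genuine obstacle to be the bookkeeping in the case $\epsilon=1$: applying the identity $d^2=0$ correctly (the cancellation rests on $\overline{\omega^{1\bar1}}=-\omega^{1\bar1}$ and on the linear independence of the $(1,2)$-form $\omega^{1\bar1\bar2}$ and the $(2,1)$-form $\omega^{12\bar1}$), and verifying that the successive substitutions genuinely preserve the triangular form so that $\{\omega^j\}_{j=1}^3$ remains a basis of $\frg^{1,0}$ of the required shape. In the case $\epsilon=0$ no such reductions are available — adding a multiple of $\omega^2$ to $\omega^3$ changes nothing since $d\omega^2=0$, and $d^2=0$ imposes no constraint on the coefficients of $d\omega^3$ — which is exactly why $A$ and $D$ must be kept as free parameters there.
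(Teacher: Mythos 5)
Your argument is correct and complete. The paper itself gives no proof of Proposition~\ref{J-red} --- it is quoted from \cite{U} --- but your reduction (read off $d\omega^2=\lambda\,\omega^{1\bar1}$ and the general form of $d\omega^3$ from \eqref{nilpotent-condition} and integrability, normalize $\lambda$, use $d^2\omega^3=0$ via $d\omega^{2\bar2}=\omega^{1\bar1\bar2}-\omega^{12\bar1}$ to kill the $\omega^{2\bar2}$-term when $\epsilon=1$, absorb the $\omega^{1\bar1}$-term into $\omega^3-a_{1\bar1}\omega^2$, and rescale $\omega^3$) is exactly the standard derivation carried out in the cited reference, and every step, including the key computation of $d$ on the basic $2$-forms, checks out.
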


Here $\omega^{jk}$
(resp. $\omega^{j\overline{k}}$) means the wedge product
$\omega^j\wedge\omega^k$ (resp.
$\omega^j\wedge\omega^{\overline{k}}$), where
$\omega^{\overline{k}}$ indicates the complex conjugated of
$\omega^k$. From now on, we shall use a similar abbreviated notation
for ``basic'' forms of arbitrary bidegree.

Notice that in the equations \eqref{nilpotentJ} the complex structure is not abelian if and only if $\rho=1$.
Next we study the 2-step and 3-step cases in Sections~\ref{2step} and~\ref{3step}, respectively.

\subsection{Non-abelian complex structures in the 2-step case}\label{2step}

\noindent
Any 6-dimensional 2-step NLA $\frg$ has first Betti number at least 3, and if it is equal to 3 then necessarily the coefficient $\epsilon$ in \eqref{nilpotentJ} is non-zero. We consider firstly $\epsilon=0$, i.e. the Lie algebra has first Betti number $\geq 4$, and
we will finish the section by considering the remaining case $\epsilon=1$.

The following proposition provides a further reduction of the equations \eqref{nilpotentJ}
when $\epsilon=0$ and the structure is not complex-parallelizable. Recall that $J$ is \emph{complex-parallelizable} if
$[JX,Y]=J[X,Y]$, for all $X,Y\in \frg$, or equivalently $d(\frg^{1,0})\subset\bigwedge^{2,0}(\frg^*)$. These structures
are the natural complex structures of \emph{complex Lie algebras}, and in six dimensions they correspond to
$\epsilon=A=B=C=D=0$ and the possible Lie algebras are
$\frh_1$ (for $\rho=0$) and $\frh_5$ (for $\rho=1$).

\begin{proposition}\label{nueva}
Let $J$ be a complex structure on a $2$-step NLA $\frg$ of
dimension~$6$ with first Betti number~$\geq 4$. If $J$ is not
complex-parallelizable, then there is a basis $\{\omega^j\}_{j=1}^3$
for $\frg^{1,0}$ such that
\begin{equation}\label{epsilonzero-red-new-bis}
d \omega^1=d\omega^2=0,\quad d\omega^3=\rho\, \omega^{12} +
\omega^{1\bar{1}} + \lambda\,\omega^{1\bar{2}} + D\,\omega^{2\bar{2}},
\end{equation}
where $\rho\in \{0,1\}$, $\lambda\in \mathbb{R}$ such that
$\lambda\geq 0$, and $D\in \mathbb{C}$ with $\Imag D\geq 0$.
Moreover, if we denote $x=\Real D$ and
$y=\Imag D$, then:
\begin{enumerate}
\item[{\rm (i)}] If $\lambda=\rho$, then the Lie algebra $\frg$ is
isomorphic to
\begin{enumerate}
\item[{\rm (i.1)}] $\frh_2$, for $y>0$;
\item[{\rm (i.2)}]
$\frh_3$, for $\rho=y=0$ and $x\not=0$;
\item[{\rm (i.3)}] $\frh_4$,
for $\rho=1$, $y=0$ and $x\not=0$;
\item[{\rm (i.4)}] $\frh_6$, for
$\rho=1$ and $x=y=0$;
\item[{\rm (i.5)}] $\frh_8$, for $\rho=x=y=0$.
\end{enumerate}
\item[{\rm (ii)}] If $\lambda\not=\rho$, then the Lie algebra $\frg$
is isomorphic to
\begin{enumerate}
\item[{\rm (ii.1)}] $\frh_2$, for $4y^2 >
(\rho-\lambda^2)(4x+\rho-\lambda^2)$;
\item[{\rm (ii.2)}] $\frh_4$, for
$4y^2 = (\rho-\lambda^2)(4x+\rho-\lambda^2)$;
\item[{\rm (ii.3)}] $\frh_5$,
for $4y^2 < (\rho-\lambda^2)(4x+\rho-\lambda^2)$.
\end{enumerate}
\end{enumerate}
\end{proposition}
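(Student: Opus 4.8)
The plan is to specialise Proposition~\ref{J-red}, reduce the structure equations by an explicit sequence of changes of the complex basis, and then recognise the underlying real Lie algebra from the reduced equations. Since $\frg$ has first Betti number $\geq4$, the coefficient $\epsilon$ in \eqref{nilpotentJ} vanishes, so one starts from $d\omega^{1}=d\omega^{2}=0$ and $d\omega^{3}=\rho\,\omega^{12}+A\,\omega^{1\bar1}+B\,\omega^{1\bar2}+C\,\omega^{2\bar1}+D\,\omega^{2\bar2}$, and ``$J$ not complex-parallelizable'' means precisely $(A,B,C,D)\neq(0,0,0,0)$. The changes of basis preserving this shape are $\omega^{1}\mapsto a\,\omega^{1}+b\,\omega^{2}$, $\omega^{2}\mapsto c\,\omega^{1}+e\,\omega^{2}$ with $ae\neq bc$ (forced, since $\omega^{1},\omega^{2}$ must remain closed), together with $\omega^{3}\mapsto\mu\,\omega^{3}$ modulo $\langle\omega^{1},\omega^{2}\rangle$, $\mu\neq0$; under them $\rho$ is rescaled by a nonzero factor, while the $2\times2$ matrix $M$ formed by the coefficients $A,B,C,D$ of the $(1,1)$-part is replaced by $\mu\,Q^{t}M\bar Q$ for some invertible complex matrix $Q$ depending on the basis change. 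Since $M\neq0$, the quadratic map $w\mapsto w^{t}M\bar w$ on $\C^{2}$ is not identically zero (a short polarisation argument), so one can choose $\omega^{1}$ so that the coefficient of $\omega^{1\bar1}$ becomes nonzero, and then jointly rescale $\omega^{1},\omega^{2},\omega^{3}$ so as to normalise it to $1$ while keeping $\rho\in\{0,1\}$. A further sequence of elementary moves --- of the types $\omega^{2}\mapsto\omega^{2}+c\,\omega^{1}$, a phase rotation $\omega^{2}\mapsto e^{i\theta}\omega^{2}$ and a real rescaling of $\omega^{2}$ --- then annihilates the coefficient of $\omega^{2\bar1}$, makes that of $\omega^{1\bar2}$ a real number $\lambda\geq0$, and arranges $\Imag D\geq0$; this is exactly \eqref{epsilonzero-red-new-bis}.

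Next one passes to a real adapted basis by setting $\omega^{1}=e^{1}+ie^{2}$, $\omega^{2}=e^{3}+ie^{4}$, $\omega^{3}=e^{5}+ie^{6}$. Taking real and imaginary parts of \eqref{epsilonzero-red-new-bis}, and writing $x=\Real D$, $y=\Imag D$, one obtains that $e^{1},\dots,e^{4}$ are closed and
\[
de^{5}=(\rho+\lambda)\,e^{13}+(\lambda-\rho)\,e^{24}+2y\,e^{34},\qquad
de^{6}=-2\,e^{12}+(\rho-\lambda)\,e^{14}+(\rho+\lambda)\,e^{23}-2x\,e^{34}.
\]
Thus the isomorphism class of $\frg$ is completely determined by the pencil of $2$-forms on $\langle e^{1},\dots,e^{4}\rangle$ spanned by $de^{5}$ and $de^{6}$. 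A direct computation of wedge squares gives, using $\rho^{2}=\rho$,
\[
(s\,de^{5}+t\,de^{6})\wedge(s\,de^{5}+t\,de^{6})=2\bigl[(\rho-\lambda^{2})\,s^{2}-4y\,st+(4x+\rho-\lambda^{2})\,t^{2}\bigr]\,e^{1234},
\]
so the decisive invariant is the discriminant $\Delta=4y^{2}-(\rho-\lambda^{2})(4x+\rho-\lambda^{2})$ of this binary quadratic form. Since $\rho\in\{0,1\}$ and $\lambda\geq0$, the dichotomy $\lambda=\rho$ versus $\lambda\neq\rho$ coincides with $\rho-\lambda^{2}=0$ versus $\rho-\lambda^{2}\neq0$.

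If $\lambda\neq\rho$, then $\rho-\lambda^{2}\neq0$, hence $de^{5}\neq0$ (so $b_{1}=4$ and $\dim\frg^{1}=2$) and the binary form is not identically zero; its type is read off from the roots of the quadratic, giving the reducible algebra $\frh_{2}=(0,0,0,0,12,34)$ when $\Delta>0$ (two distinct real roots, i.e.\ the pencil contains two decomposable members), the algebra $\frh_{4}$ when $\Delta=0$ (a double real root), and the complex Heisenberg algebra $\frh_{5}$ when $\Delta<0$ (no real root); this is case~(ii). If $\lambda=\rho$, the quadratic degenerates to $-4y\,st+4x\,t^{2}$, of discriminant $16y^{2}$: for $y\neq0$ one is again in the two-distinct-real-roots case, whence $\frh_{2}$, which is (i.1); for $y=0$ the equations reduce to $de^{5}=2\rho\,e^{13}$, $de^{6}=-2\,e^{12}+2\rho\,e^{23}-2x\,e^{34}$, and $\frg$ is identified directly by a real change of basis in $\langle e^{1},\dots,e^{4}\rangle$: if $\rho=0$ then $de^{5}=0$, so $b_{1}=5$ and $\dim\frg^{1}=1$, and $\frg\cong\frh_{8}$ for $x=0$ while $\frg\cong\frh_{3}$ for $x\neq0$; if $\rho=1$ then $de^{5}=2e^{13}\neq0$ and the two $2$-forms acquire the common factor $e^{1}+e^{3}$, which gives $\frg\cong\frh_{6}$ for $x=0$, while for $x\neq0$ an analogous substitution gives $\frg\cong\frh_{4}$. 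This covers (i.2)--(i.5).

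The main obstacle is twofold. First, the normalisation of the first paragraph must be arranged so as to control $\rho$, $\lambda$ and $D$ simultaneously, which requires keeping careful track of how each elementary move acts on all of $\rho,A,B,C,D$; this is the tedious bookkeeping part of the argument. Second, in the degenerate branch $\lambda=\rho$, $y=0$ one has to produce precisely the right real linear substitutions to match the Salamon normal forms in Theorem~\ref{clasif-complex}. The conceptually substantive step, by contrast, is the recognition that the single discriminant $\Delta=4y^{2}-(\rho-\lambda^{2})(4x+\rho-\lambda^{2})$ governs both the trichotomy $\frh_{2}/\frh_{4}/\frh_{5}$ of case~(ii) and the internal split of case~(i).
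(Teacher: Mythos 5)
Your argument is correct in substance and is genuinely more self-contained than the paper's: the paper disposes of both halves of the statement by citation, invoking \cite[Lemma~11]{U} for the normal form (adding only two explicit basis changes, one making $B$ real non-negative and one replacing $D$ by $\bar D$) and \cite[Proposition~13]{U} for the identification of the underlying Lie algebras. You instead re-derive the classification directly, and the second half of your argument --- the valuable part --- checks out: the real structure equations are correct, the wedge square of $s\,de^{5}+t\,de^{6}$ is indeed $2\bigl[(\rho-\lambda^{2})s^{2}-4y\,st+(4x+\rho-\lambda^{2})t^{2}\bigr]e^{1234}$, the equivalence $\lambda=\rho\Leftrightarrow\rho-\lambda^{2}=0$ holds because $\rho\in\{0,1\}$ and $\lambda\geq0$, and reading the algebra off the number of decomposable members of the pencil (with the separate treatment of the branch where the leading coefficient vanishes) reproduces (i.1)--(i.5) and (ii.1)--(ii.3). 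You should state explicitly the lemma you use tacitly, namely that a two-dimensional subspace $W\subset\Lambda^{2}\langle e^{1},\dots,e^{4}\rangle$ on which the Pfaffian quadratic form is not identically zero is determined up to ${\rm GL}(4,\R)$ by the type of that form (indefinite, rank one, or definite); this is a short, standard verification, but it is the hinge of case (ii).

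The one concrete flaw is in the reduction to the normal form. The ``further sequence of elementary moves'' you list ($\omega^{2}\mapsto\omega^{2}+c\,\omega^{1}$, a phase rotation and a real rescaling of $\omega^{2}$) cannot arrange $\Imag D\geq0$: the translation leaves $D$ unchanged, and any rescaling $\omega^{2}\mapsto c\,\omega^{2}$ sends $D$ to $|c|^{2}D$, so all three moves preserve $\arg D$. Likewise, a phase rotation of $\omega^{2}$ alone rotates $\rho$ off $\{0,1\}$, so the normalisation $B=\lambda\geq0$ must be achieved by rotating $\omega^{1}$ and $\omega^{2}$ by opposite phases (which is what the paper's substitution $\omega^{1}=z\sigma^{1}$, $\omega^{2}=\bar z\sigma^{2}$, $\omega^{3}=|z|^{2}\sigma^{3}$ does). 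The paper's proof also supplies exactly the move your list is missing for $D$: when $D\neq0$, the change $\omega^{1}=-\bar D\sigma^{2}$, $\omega^{2}=\sigma^{1}+\lambda\sigma^{2}$, $\omega^{3}=\bar D\sigma^{3}$ replaces $D$ by $\bar D$ while preserving the rest of the normal form. With that move added your reduction closes; as written, your first paragraph does not establish the stated condition $\Imag D\geq0$, although since your case analysis depends only on $y^{2}$ the identification of the Lie algebras in (i) and (ii) is unaffected.
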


\begin{proof}
In \cite[Lemma 11]{U} it is proved that under these conditions there is a basis $\{\sigma^j\}_{j=1}^3$
for $\frg^{1,0}$ such that
\begin{equation}\label{epsilonzero-red-new}
d \sigma^1=d\sigma^2=0,\ d\sigma^3=\rho\, \sigma^{12} +
\sigma^{1\bar{1}} + B\,\sigma^{1\bar{2}} + D\,\sigma^{2\bar{2}},
\end{equation}
where $B,D\in \mathbb{C}$ and $\rho\in \{0,1\}$.

If $B\neq0$ then we can take any non-zero
solution $z$ of $\bar z\frac{B}{|B|}=z$, and the equations~\eqref{epsilonzero-red-new} reduce to
\eqref{epsilonzero-red-new-bis} with $\lambda=|B|$
with respect to the new basis
$\{\omega^1=z\,\sigma^1, \omega^2=\bar z\,\sigma^2, \omega^3=|z|^2\,\sigma^3\}$.

Consider now $B=\lambda$ with
$\lambda\in \mathbb{R}^{\geq 0}$ in \eqref{epsilonzero-red-new}. If $D\neq 0$, then
with respect to the new basis
$\{\omega^1=-\bar D\,\sigma^2, \omega^2=\sigma^1+\lambda\,\sigma^2,\omega^3=\bar D\,\sigma^3\}$
we get \eqref{epsilonzero-red-new-bis} with $\bar D$ instead of~$D$.

Finally, the second part of the proposition follows directly from \cite[Proposition~13]{U}.
\end{proof}

From now on we consider $\rho=1$.
By Proposition~\ref{nueva} any two complex
structures on the Lie algebra $\frh_6$ are equivalent.
Thus, it remains to classify up to equivalence the non-abelian structures $J$
on $\frh_2$, $\frh_4$ and $\frh_5$. Any such $J$ is identified
with a triple $(1,\lambda, D)$ through equations \eqref{epsilonzero-red-new-bis} with $\rho=1$,
$\lambda\geq 0$ and $\Imag D\geq 0$.

We will say that two triples $(1,\lambda, D)$ and $(1,\lambda', D')$ are equivalent, denoted by $(1,\lambda, D)\sim (1,\lambda', D')$,
if the corresponding structures $J$ and $J'$ are equivalent. So, the problem reduces to classify triples $(1,\lambda, D)$
up to equivalence.

\begin{lemma}\label{lema-eq}
Let us consider two triples $(1,\lambda, D)$ and $(1,t,E)$ as above.
\begin{enumerate}
\item[{\rm (i)}] If $D=0$ then, $(1,t,E)\sim (1,\lambda,0)$ if and only if $t=\lambda$ and $E=0$.
\item[{\rm (ii)}] If $D\not=0$ then, $(1,t,E)\sim (1,\lambda,D)$ if and only if
there exist non-zero complex numbers $e,\,f$ such that $E=De/\bar{e}$ and
\begin{equation}\label{condicion_final}
\left(\frac{|f|^2}{\bar e}-1\right) (\bar D\bar e-De)^2
=(\lambda \bar f-tf)(\lambda\bar D\bar e f-tDe\bar
f).\end{equation}
\end{enumerate}
\end{lemma}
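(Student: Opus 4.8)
The plan is to set up the equivalence problem explicitly in terms of the structure equations \eqref{epsilonzero-red-new-bis} with $\rho=1$, and then reduce the general change of basis to a normalized one. Suppose $\{\omega^j\}$ realizes $(1,\lambda,D)$ and $\{\tau^j\}$ realizes $(1,t,E)$. An equivalence is a $\mathbb{C}$-linear isomorphism $F^*\colon \frg^{1,0}_{J'}\to\frg^{1,0}_{J}$ commuting with $d$. Since $d\omega^1=d\omega^2=d\tau^1=d\tau^2=0$ while $\omega^3$ and $\tau^3$ are the only generators with nonzero differential, $F^*$ must send $\langle\tau^1,\tau^2\rangle$ onto $\langle\omega^1,\omega^2\rangle$ (these are the closed $(1,0)$-forms, i.e.\ the annihilator of the commutator ideal intersected with $\frg^{1,0}$). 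So we may write, for some constants,
\begin{equation}\label{chvar-plan}
F^*\tau^1 = a\,\omega^1 + b\,\omega^2,\quad
F^*\tau^2 = c\,\omega^1 + e\,\omega^2,\quad
F^*\tau^3 = g\,\omega^1 + h\,\omega^2 + f\,\omega^3,
\end{equation}
with $f\neq 0$ and $ae-bc\neq 0$ so that $F$ is invertible. The whole content of the lemma is then the condition on $(a,b,c,e,g,h,f)$ forced by $d(F^*\tau^3)=F^*(d\tau^3)$.

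The key computation is to expand both sides of $d(F^*\tau^3)=F^*(d\tau^3)$ using \eqref{chvar-plan}. On the left, $d(F^*\tau^3)=f\,d\omega^3 = f(\omega^{12}+\omega^{1\bar1}+\lambda\,\omega^{1\bar2}+D\,\omega^{2\bar2})$. On the right, $F^*(d\tau^3)=(F^*\tau^1)\wedge(F^*\tau^2)+(F^*\tau^1)\wedge\overline{(F^*\tau^1)}+t\,(F^*\tau^1)\wedge\overline{(F^*\tau^2)}+E\,(F^*\tau^2)\wedge\overline{(F^*\tau^2)}$, which one expands into the basis $\{\omega^{12},\omega^{1\bar1},\omega^{1\bar2},\omega^{2\bar1},\omega^{2\bar2}\}$ of $\bigwedge^2\langle\omega^1,\omega^2,\omega^{\bar1},\omega^{\bar2}\rangle$. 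Comparing the five coefficients gives five scalar equations. The $\omega^{12}$-coefficient yields $ae-bc=f$. The $(1,1)$-type coefficients give a linear system in the quantities $a\bar a, a\bar c, c\bar a, c\bar c$ (and their $t,E$ twists) that must reproduce $(f, f\lambda, 0, fD)$; the vanishing $\omega^{2\bar1}$-coefficient is the crucial constraint that ties $b,c,e$ together. I expect that after using the $\omega^{12}$ equation to eliminate one variable and rescaling, one can normalize so that the change of basis is essentially governed by two free parameters $e$ and $f$, as in the statement, with $a,b,c,g,h$ either determined or irrelevant (since $g,h$ multiply only closed forms on the right and closed terms of $d\omega^3$ on the left).

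For part (i), when $D=0$ the term $D\,\omega^{2\bar2}$ disappears, so the $\omega^{2\bar2}$-coefficient on the right forces $E\,|$(coefficient of $\omega^2$ in $F^*\tau^2)|^2 + \dots = 0$; combined with the other equations and the invertibility $ae-bc=f\neq0$, one deduces $E=0$ and then that the $(1,1)$-part forces $t=\lambda$. This is the easy case. For part (ii), when $D\neq0$, the $\omega^{2\bar2}$ and $\omega^{1\bar1}$ coefficients should give $E = D e^2/|e|^2$ for a suitable reparametrized $e$ (the phase relation reflecting that only $D/|D|$ up to a square matters), and substituting back into the remaining $\omega^{1\bar2}$ and $\omega^{2\bar1}$ equations, after eliminating the auxiliary constants $a,b,c,g,h$ via the relations already obtained, should collapse exactly to \eqref{condicion_final}. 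The converse direction is routine: given $e,f$ satisfying \eqref{condicion_final}, one reconstructs an explicit $F$.

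The main obstacle I anticipate is bookkeeping in the elimination: there are seven change-of-basis constants and five coefficient equations, so one must carefully track which constants are genuinely constrained and which can be absorbed, and then massage the surviving two equations into the precise algebraic form \eqref{condicion_final} — in particular recognizing the factor $(1-|f|^2/\bar e)$ and the product structure $(\lambda\bar f - tf)(\lambda\bar D\bar e f - tDe\bar f)$ requires choosing the normalization of $e$ and $f$ judiciously (so that, e.g., $a$ is expressed through $\bar D\bar e - De$). It may be cleanest to first dispose of the $\lambda=0$ or $t=0$ subcases separately to see the pattern, then handle the generic case. Everything else — the reduction \eqref{chvar-plan}, the wedge expansion, the $D=0$ case — is mechanical given Proposition~\ref{nueva}.
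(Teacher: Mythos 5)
Your proposal is correct and follows essentially the same route as the paper: write the general change of basis, use commutation with $d$ to force the block form (with the $\omega^1,\omega^2$-coefficients of the image of $\tau^3$ irrelevant), compare the five coefficients of $d(F^*\tau^3)=F^*(d\tau^3)$, and eliminate $a,b,c$ to leave the two parameters and the single polynomial relation \eqref{condicion_final}; the $D=0$ case is handled exactly as you sketch. One cosmetic caveat: in the statement (and in the paper's elimination) $e$ denotes the $\omega^3$-coefficient of the image of $\tau^3$ and $f$ the $\omega^2$-coefficient of the image of $\tau^2$ — the opposite of your labelling — which is why the relation coming from the $\omega^{2\bar2}$ and $\omega^{1\bar1}$ coefficients reads $De=E\bar e$, i.e. $E=De^2/|e|^2$, with $e$ in that role.
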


\begin{proof}
The structure equations corresponding to the triples $(1,\lambda, D)$ and $(1,t,E)$ are
$$d\omega^1=d\omega^2=0,\quad d\omega^3=\omega^{12} +
\omega^{1\bar1}+\lambda\omega^{1\bar2}+D\omega^{2\bar2},$$
$$d\sigma^1=d\sigma^2=0,\quad d\sigma^3=\sigma^{12} +
\sigma^{1\bar1}+t\sigma^{1\bar2}+E\sigma^{2\bar2},$$
where $\lambda,t\geq 0$ and $\Imag D, \Imag E \geq 0$.
Then
$(1,t,E)\sim (1,\lambda,D)$ if and only if there exists an automorphism of the Lie algebra preserving the complex equations, i.e.
there is $(m_{ij})\in {\rm GL}(3,\mathbb{C})$ such that
$\sigma^i=\sum_{j=1}^3 m_{ij}\,\omega^j$ and
$$
d\sigma^i=\sum_{j=1}^3 m_{ij}\,d\omega^j,\quad i=1,2,3.
$$
These conditions are equivalent to
$$
\sigma^1=a\,\omega^1+b\,\omega^2,\quad
\sigma^2=c\,\omega^1+f\,\omega^2,\quad
\sigma^3=m_{31}\,\omega^1+m_{32}\,\omega^2+ e\,\omega^3,
$$
and
\begin{equation}\label{ecuaciones_cambio}
\begin{array}{rl}
\begin{cases}
(\textrm{I})&e= af-bc,\\
(\textrm{II})&e=|a|^2 + t\,a\bar c + E|c|^2,\\
(\textrm{III})&\lambda e= a\bar b + t\,a\bar f + E c\bar f,\\
(\textrm{IV})&0= \bar ab + t\,b\bar c + E \bar cf,\\
(\textrm{V})&De=|b|^2 + t\,b\bar f + E |f|^2.
\end{cases}
\end{array}
\end{equation}
Notice that $m_{13}=m_{23}=0$, $e\not=0$ and the coefficients $m_{31}$ and $m_{32}$ are not relevant.

It is straightforward to see that coefficient $f$ must be
non-zero (otherwise $\lambda=t$ and $D=E$)
and so we can express $a$ as
$$a=\frac{e+bc}{f}.$$

First of all, let us suppose that $D=0$.  Replacing $a$ in $(\textrm{IV})$ and using $(\textrm{V})$ we obtain that $b=0$
and therefore $E=0$ by equation $(\textrm{V})$.
Combining $(\textrm{I})$ and $(\textrm{III})$ we get that $\lambda f= t\bar f$.
Since $\lambda$ and $t$ are real non-negative numbers, we conclude that $\lambda=t$,
i.e. $(1,\lambda,0)$ defines an equivalence class for every $\lambda\geq0$.
This completes the proof of~(i).

We suppose next that $D\neq0$. In order to solve~\eqref{ecuaciones_cambio} we transform it into an equivalent system by doing
the following substitutions.
Replacing $a$ in equation $(\textrm{IV})$ and using $(\textrm{V})$ we can express
$$\bar c=-\frac{b \bar e}{D e}.$$
Next, in $(\textrm{II})$ we can substitute $a$ and $c$ and use again $(\textrm{V})$ to obtain that
$$De=E\bar e,$$
which implies in particular $|D|=|E|$. Notice that since $D\not=0$ we can assume $E\neq \bar D$ by Proposition~\ref{nueva}.
Now, $\bar c=-b/E$.
Proceeding in a similar way in equation $(\textrm{III})$ we get
$$\bar b=\frac{\lambda f-t\bar f}{1-D/\bar E}.$$
Finally, using the expressions of $a$, $b$, $c$ above, equation $(\textrm{V})$ is equivalent to \eqref{condicion_final}.
Therefore, given $e,f\in\C-\{0\}$ satisfying $De=E\bar e$ and \eqref{condicion_final}, it is always possible to find $a,b,c\in\C$ such that system~\eqref{ecuaciones_cambio} is satisfied.
\end{proof}

\begin{remark}
{\rm
As a consequence of Lemma~\ref{lema-eq}~(ii), when $D\not=0$ a
necessary condition for
$(1,t, E)$ to be equivalent to $(1,\lambda, D)$ is that $|D|=|E|$.
Moreover, to find an equivalent complex structure $(1,t,E)$ it
suffices to find $t\geq 0$ and $e,f\in\C - \{0\}$ satisfying~\eqref{condicion_final},
because $E$ is necessarily given by $E=De/\bar{e}$.
}
\end{remark}

\begin{corollary}\label{no-equiv-lambda-iguales}
Let $E\neq \bar D$.
If $(1,t, E)\sim (1,\lambda, D)$ then, $t=\lambda$ if and only if~$E=D$.
\end{corollary}

\begin{proof}
By hypothesis $D$ cannot be zero, so we are in case (ii) of Lemma~\ref{lema-eq}.
Suppose first that $\lambda=t$ in \eqref{condicion_final}, i.e.
$$(\bar D\bar e-De)^2\left(\frac{|f|^2}{\bar
e}-1\right)=\lambda^2(\bar f-f)(\bar D\bar e f-De\bar f).$$
The right hand side of the previous equality is a real number.
If it
is zero then $e=|f|^2$ (otherwise $De=\bar D\bar e$ would imply $E=\bar
D$); thus, $e$ is a real number and since $E=De/\bar{e}$ we conclude that $D=E$.
On the other hand, if it is a
non-zero real number, then $\frac{|f|^2}{\bar e}-1$ must be a
real number and then $e\in\mathbb R$ and again~$D=E$.

Conversely, let us suppose that $E=D\neq 0$.
In this case $e\in\R$ and by \eqref{condicion_final} we can
express it as
$$e=|f|^2-\frac{(\lambda \bar f - tf)(\lambda \bar D f-tD\bar f)}{(\bar
D-D)^2}.$$
Notice that by hypothesis $D\not=\bar E=\bar D$. To ensure that $e\in\R$ it must
happen that $(\lambda \bar f - tf)(\lambda
\bar D f-tD\bar f)\in\R$ or equivalently,
$$|f|^2(\lambda^2-t^2)(\bar D-D)=0.$$
As $f(\bar D-D)\neq 0$ the only possibility
to solve the previous equation is $\lambda= t$.
\end{proof}

From the previous results it follows that it remains to consider the case when $D\neq 0$ and $\lambda\neq t$.
The next lemma provides a simplification of equation \eqref{condicion_final}.

\begin{lemma}\label{nec-suf-condicion}
Let us suppose that $\lambda\neq t$, $D=x+iy\neq 0$ and $e\in \mathbb{C}-\{0\}$. Then,
$(1,\lambda,D)\sim (1,t,De/\bar{e})$ if and only if
\begin{equation}\label{condicion-necesaria}
4y^2 - (t^2-\lambda^2)(4x+t^2-\lambda^2)\geq 0.
\end{equation}
\end{lemma}

\begin{proof}
By Lemma~\ref{lema-eq}~(ii), we know that $(1,\lambda,D)\sim (1,t,De/\bar{e})$ if and only if \eqref{condicion_final}
is satisfied. This condition reads, with respect to $H=De$, as
\begin{eqnarray*}
(\bar H-H)^2\left(\bar D|f|^2-\bar H\right)=\bar H(\lambda \bar
f-tf)(\lambda f\bar H-t\bar f H).
\end{eqnarray*}
Taking real and imaginary parts in the expression above we obtain
\begin{equation}\label{real-im-A}
\begin{array}{ll}
\begin{cases}
4H_2^2(H_1-x|f|^2) =\!\! &|f|^2(t^2-\lambda^2)H_2^2 +
|f|^2(t^2+\lambda^2)H_1^2\\[6pt]
& - 2\lambda t(f_1^2-f_2^2)H_1^2 - 4\lambda tH_1H_2f_1f_2,\\[8pt]
4H_2^2(y|f|^2-H_2) =\!\! &2\lambda H_2\left[tH_1(f_1^2-f_2^2) +
2tH_2f_1f_2 - \lambda |f|^2H_1\right],
\end{cases}
\end{array}\end{equation} where $H=H_1 + iH_2$ and $f=f_1 +
if_2$. Observe that $H_2\neq 0$, otherwise we get a
contradiction using the first equation of \eqref{real-im-A}.

Substituting the second equation of \eqref{real-im-A} in the first
one and replacing $H$ by $De$, we can express the system
\eqref{real-im-A} as
\begin{eqnarray}\label{real-im}
\begin{cases}
e_1^2(t^2-\lambda^2) + 4y e_1e_2 + e^2_2(t^2-\lambda^2 + 4x)=0,\\[5pt]
2H_2(y|f|^2-H_2) =\lambda \left[tH_1(f_1^2-f_2^2) + 2tH_2f_1f_2
- \lambda |f|^2H_1\right],
\end{cases}\end{eqnarray} where $e=e_1 + i e_2$.

To solve the first equation in \eqref{real-im} as a second degree equation in $e_1$ we need the
discriminant to be greater than or equal to 0, i.e.
$4y^2 - (t^2-\lambda^2)(4 x+t^2-\lambda^2)\geq 0$, which is precisely condition~\eqref{condicion-necesaria}.

Now, suppose that \eqref{condicion-necesaria} holds.
Then we
obtain that
$$e_1=\frac{e_2\beta}{\lambda^2-t^2},\quad\quad\quad e=e_2\left(\frac{\beta}{\lambda^2-t^2} + i\right),$$
where
$\beta=2y + \sqrt{4y^2 - (t^2-\lambda^2)(4x+t^2-\lambda^2)}$ and
$e_2$ is determined by the second equation in \eqref{real-im}.
\end{proof}

\begin{corollary}\label{equiv-general}
Let us suppose that $\lambda\neq t$ and $D=x+iy\neq 0$.
If \eqref{condicion-necesaria} holds then
$$(1, \lambda, D)\sim \left(1, t, D\left(\displaystyle\frac{\beta^2-(\lambda^2-t^2)^2}{\beta^2+(\lambda^2-t^2)^2} +
\displaystyle\frac{2\beta(\lambda^2-t^2)}{\beta^2+(\lambda^2-t^2)^2}\,i\right)\right),$$
where $\beta=2y + \sqrt{4y^2 -
(t^2-\lambda^2)(4x+t^2-\lambda^2)}$.
\end{corollary}

Comparing the inequalities (ii.1) and (ii.2) in Proposition~\ref{nueva} with the condition~\eqref{condicion-necesaria},
we observe that for $\frh_2$ and $\frh_4$ it is possible to take $t=1$ in the previous corollary in order to get equivalences with
the complex structures (i.1) and (i.3), respectively.
Therefore, using Corollary~\ref{no-equiv-lambda-iguales}, we conclude:

\begin{proposition}\label{clasif-h2-y-h4}
Let us consider the family of complex structures
\begin{equation}\label{h2-h4}d\omega^1=d\omega^2=0,\quad d\omega^3=\omega^{12} +
\omega^{1\bar1}+\omega^{1\bar2}+D\,\omega^{2\bar2},\quad \Imag D\geq 0.\end{equation}
Then:
\begin{enumerate}
\item[(i)]  Any non-abelian complex structure on $\frh_2$ is  equivalent to one and only one structure in \eqref{h2-h4} with $\Imag D>0$;
\item[(ii)] Any non-abelian complex structure on $\frh_4$ is  equivalent to one and only one structure in \eqref{h2-h4} with $D\in\R-\{0\}$.
\end{enumerate}
\end{proposition}

The classification of complex structures on $\frh_5$ requires a more subtle study.

\begin{lemma}
Any non-abelian complex structure on $\frh_5$ which is not complex-parallelizable
belongs to one of the following families: \begin{itemize}
\item[(I)] $d\omega^1=d\omega^2=0,\ \ d\omega^3=\omega^{12} +
\omega^{1\bar 1} + \lambda\,\omega^{1\bar 2} +
iy\,\omega^{2\bar2}, \ \ \textrm{where}\,\,\,
0\leq 2y<|1-\lambda^2|;$
\item[(II)] $d\omega^1=d\omega^2=0,\ \
d\omega^3=\omega^{12} + \omega^{1\bar 1} + (x+iy)\,\omega^{2\bar2},
\ \  \textrm{where}\,\,\, 4y^2<1+4x.$
\end{itemize}
Moreover,
\begin{itemize}
\item[(i)] the structures in family {\rm (I)} are non-equivalent;
\item[(ii)] the structures in family {\rm (II)} are non-equivalent;
\item[(iii)] a structure $(1,\lambda, iy)$ in family {\rm (I)} is equivalent to a structure in family~{\rm (II)}
if and only if $2\lambda^2\in\left[0,1\right)$ and $2y\in[\lambda^2,1-\lambda^2)$.
\end{itemize}
\end{lemma}

\begin{proof}
Let us consider a complex structure given by $(1,\lambda, D=x+i\,y)$ on $\frh_5$, i.e.
$$
4y^2<(1-\lambda^2)(4x + 1 - \lambda^2),
$$
according to Proposition~\ref{nueva}~(ii.3).
If $\lambda^2\geq 2x$, then $(1, \lambda, D) \sim (1, \sqrt{\lambda^2-2x}, i|D|)$ because \eqref{condicion-necesaria} expresses simply as $4|D|^2\geq 0$ and it trivially holds. On the other hand, if $\lambda^2< 2x$, then $(1, \lambda, D) \sim (1, 0, E)$, where $E$ is given in Corollary~\ref{equiv-general}, because in this case $4y^2+\lambda^2(4x-\lambda^2)\geq 0$, that is, condition \eqref{condicion-necesaria} is satisfied.

To study further equivalences, it is clear that structures in family (I) are non-equivalent and the same holds for structures in family (II).
Now let us consider the triples $(1,\lambda, iy)$ and $(1,0,E)$. Then, \eqref{condicion-necesaria} expresses simply as
\begin{equation}\label{condicion-necesaria-reducida-h5}
4y^2\geq \lambda^4.
\end{equation}
Condition for family (I) implies that $4y^2<(1-\lambda^2)^2$, which is equivalent to $4y^2-\lambda^4<1-2\lambda^2$, so if $2\lambda^2\geq 1$ then \eqref{condicion-necesaria-reducida-h5} does not hold.  Now, if $0\leq \lambda^2<\frac 12$ then the condition for family (I) is
equivalent to $y<\frac 12-\frac{\lambda^2}{2}$, and therefore when $2y\in[\lambda^2, 1-\lambda^2)$ the triple $(1,\lambda, iy)$ in family (I)
is equivalent to the triple $(1,0,E=-\frac 12(\lambda^2 - \sqrt{4y^2-\lambda^4}\,i))$ in family (II).
\end{proof}

\begin{proposition}\label{teo-h5}
Any non-abelian complex structure on $\frh_5$ which is not complex-parallelizable
is equivalent to one and only one structure in the following families:
\begin{itemize}
\item[(I)] $d\omega^1=d\omega^2=0,\quad d\omega^3=\omega^{12} +
\omega^{1\bar 1} + \lambda\,\omega^{1\bar 2} +
D\,\omega^{2\bar2},$ \\[7pt]
where $\Real D=0$ and
$\begin{cases}
0\leq 2\,\Imag D<\lambda^2,\quad\ 0<\lambda^2<\frac 12; \mbox{ or }\\[6pt]
0\leq 2\,\Imag D<|1-\lambda^2|,\quad
\frac 12\leq \lambda^2.
\end{cases}$
\item[(II)] $d\omega^1=d\omega^2=0,\quad d\omega^3=\omega^{12} +
\omega^{1\bar 1} + D\,\omega^{2\bar2}, \quad \textrm{where}\,\,\,
4(\Imag D)^2<1+4\,\Real D$.
\end{itemize}
\end{proposition}

\medskip

To finish this section, it remains to study the case of 2-step NLAs $\frg$ with first Betti number equal to 3,
which corresponds to $\epsilon=1$ in \eqref{nilpotentJ}.

\begin{proposition}\label{h7}
Let $J$ be a nilpotent complex structure on an NLA $\frg$ given by~\eqref{nilpotentJ} with $\epsilon=1$, i.e.
$$
d\omega^1 = 0,\quad d\omega^2 = \omega^{1\bar{1}}, \quad d\omega^3 =
\rho\, \omega^{12} + B\, \omega^{1\bar{2}} + C\, \omega^{2\bar{1}},
$$
with $\rho\in \{0,1\}$ and $B,C\in \mathbb{C}$ such that $(\rho,B,C)\not=(0,0,0)$.
Then $\frg$ is $2$-step nilpotent if and
only if $B=\rho=1$ and $C=0$. In such case $\frg$ is isomorphic to
$\frh_7$ and all the complex structures are equivalent.
\end{proposition}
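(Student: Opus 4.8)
The plan is to rewrite the hypothesis ``$\frg$ is $2$-step nilpotent'' as a condition on the Chevalley-Eilenberg differential and then read it off from the displayed equations. Recall that a nilpotent Lie algebra $\frg$ is $2$-step (i.e.\ $\frg^2=[\frg^1,\frg]=0$) if and only if $d\alpha\in\bigwedge^2(\ker d)$ for every $\alpha\in\gc^*$, where $\ker d=\{\alpha\in\gc^*:\,d\alpha=0\}$ is the annihilator of $[\frg,\frg]$: indeed, for $X\in\frg^1$ one has $[X,Y]\in\frg^2$, so $d\alpha(X,Y)=-\alpha([X,Y])$ vanishes for all $\alpha$, all $X\in\frg^1$ and all $Y$ exactly when $\frg^2=0$, and the vanishing of all such $d\alpha(X,Y)$ is precisely the condition $d\alpha\in\bigwedge^2(\ker d)$. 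Since $\gc^*=\langle\omega^1,\omega^2,\omega^3,\omega^{\bar1},\omega^{\bar2},\omega^{\bar3}\rangle$ and $\ker d$ (hence $\bigwedge^2\ker d$) is conjugation-stable, it is enough to test this on $\omega^1,\omega^2,\omega^3$; and since $d\omega^1=0$ and $d\omega^2=\omega^{1\bar1}\in\bigwedge^2\langle\omega^1,\omega^{\bar1}\rangle\subset\bigwedge^2\ker d$ always, the whole question reduces to whether $d\omega^3\in\bigwedge^2\ker d$.

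So the first step is to compute $\ker d$. From $d\omega^1=d\omega^{\bar1}=0$, $d\omega^2=-d\omega^{\bar2}=\omega^{1\bar1}$, $d\omega^3=\rho\,\omega^{12}+B\,\omega^{1\bar2}+C\,\omega^{2\bar1}$ and its conjugate, one sees that $\omega^1,\omega^{\bar1}$ and $\omega^2+\omega^{\bar2}$ always lie in $\ker d$, while a combination $a\,\omega^3+b\,\omega^{\bar3}$ (modulo these three forms) is closed if and only if $a\rho=b\rho=0$, $aB=b\bar C$ and $aC=b\bar B$. When $\rho=1$ this forces $a=b=0$, so $\ker d=\langle\omega^1,\omega^{\bar1},\omega^2+\omega^{\bar2}\rangle$ and $\bigwedge^2\ker d=\langle\omega^{1\bar1},\ \omega^{12}+\omega^{1\bar2},\ \omega^{2\bar1}-\omega^{\bar1\bar2}\rangle$. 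When $\rho=0$ (so $(B,C)\neq(0,0)$) the same $3$-dimensional space is obtained, except in the sub-case $|B|=|C|\neq0$, where $\ker d$ is $4$-dimensional with one extra generator $\bar C\,\omega^3+B\,\omega^{\bar3}$.

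The second step is the case analysis for $d\omega^3$. If $\rho=0$, writing $d\omega^3=B\,\omega^{1\bar2}+C\,\omega^{2\bar1}$ as a combination of the generators of $\bigwedge^2\ker d$: in the $3$-dimensional case the coefficient of $\omega^{12}$ forces the coefficient of $\omega^{12}+\omega^{1\bar2}$ to be $0$, hence $B=0$, and symmetrically (via the $\omega^{\bar1\bar2}$-coefficient) $C=0$, contradicting $(B,C)\neq(0,0)$; in the $4$-dimensional case the three additional generators of $\bigwedge^2\ker d$ all involve $\omega^3$ or $\omega^{\bar3}$, which do not occur in $d\omega^3$, so their coefficients must vanish and one is back in the previous situation. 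Hence $\frg$ is never $2$-step when $\rho=0$. If $\rho=1$, expanding $d\omega^3=\omega^{12}+B\,\omega^{1\bar2}+C\,\omega^{2\bar1}$ in the basis $\omega^{1\bar1},\ \omega^{12}+\omega^{1\bar2},\ \omega^{2\bar1}-\omega^{\bar1\bar2}$ of $\bigwedge^2\ker d$ and matching the coefficients of $\omega^{12}$, $\omega^{1\bar2}$, $\omega^{\bar1\bar2}$, $\omega^{2\bar1}$ in turn forces the coefficient of $\omega^{12}+\omega^{1\bar2}$ to be $1$, then $B=1$, then the coefficient of $\omega^{2\bar1}-\omega^{\bar1\bar2}$ to be $0$, then $C=0$; conversely, if $\rho=B=1$ and $C=0$ then $d\omega^3=\omega^1\wedge(\omega^2+\omega^{\bar2})\in\bigwedge^2\ker d$, so $\frg$ is $2$-step (and not abelian, since $d\omega^2\neq0$). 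This proves the first assertion. The only mildly delicate point is the bookkeeping in the sub-case $|B|=|C|\neq 0$, where $\ker d$ jumps in dimension; but once one observes that the new generators of $\bigwedge^2\ker d$ necessarily contain $\omega^3$ or $\omega^{\bar3}$, they become irrelevant and the argument collapses to the generic sub-case.

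Finally, when $\rho=B=1$, $C=0$ the equations are $d\omega^1=0$, $d\omega^2=\omega^{1\bar1}$, $d\omega^3=\omega^1\wedge(\omega^2+\omega^{\bar2})$; using the real and imaginary parts of $\omega^1,\omega^2,\omega^3$ as a real coframe (after a harmless rescaling) one obtains the structure equations $(0,0,0,12,13,23)$, that is, $\frg\cong\frh_7$. For the last claim, every complex structure on $\frh_7$ is nilpotent by Theorem~\ref{clasif-complex}(b) and has $\epsilon=1$, because the first Betti number of $\frh_7$ is $3$ (see the remark opening Section~\ref{2step}); hence, by the previous steps, it admits a basis of $\frg^{1,0}$ satisfying exactly the normalized equations just displayed. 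Given two such structures $J,J'$, with bases $\{\omega^j\}$ and $\{\sigma^j\}$ of $\frg^{1,0}_J$ and $\frg^{1,0}_{J'}$ respectively, the conjugation-equivariant linear isomorphism $F^*\colon\gc^*\to\gc^*$ determined by $F^*\omega^j=\sigma^j$ ($j=1,2,3$) maps $\frg^{1,0}_J$ onto $\frg^{1,0}_{J'}$ and satisfies $d\circ F^*=F^*\circ d$ (both bases obey the same equations, as one checks on $\omega^1,\omega^2,\omega^3$ and their conjugates); by the criterion recalled just before Proposition~\ref{J-red}, $J$ and $J'$ are equivalent.
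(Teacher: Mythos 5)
Your proof is correct, but it runs on the dual side of the computation the paper actually performs. The paper's proof is a direct bracket calculation: with $Z_1,Z_2,Z_3$ dual to $\omega^1,\omega^2,\omega^3$, it observes that $[\frg,\frg]\subseteq\langle i(Z_2-\bar Z_2),\Real Z_3,\Imag Z_3\rangle$ with $\Real Z_3,\Imag Z_3$ central, so the whole $2$-step condition collapses to the single identity $[i(Z_2-\bar Z_2),Z_1]=(\rho-B)i\,Z_3+\bar C i\,\bar Z_3=0$, giving $B=\rho$, $C=0$ (and then $\rho=1$ from $(\rho,B,C)\neq(0,0,0)$) in one line. You instead recast "$2$-step" as "$d\alpha\in\bigwedge^2(\ker d)$ for all $\alpha\in\gc^*$", compute $\ker d$, and match coefficients of $d\omega^3$ against a basis of $\bigwedge^2(\ker d)$. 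This is a valid and fully equivalent reformulation (your identification of $\bigwedge^2\operatorname{Ann}(W)$ with the forms annihilated by contraction with $W=[\frg,\frg]$ is the standard fact that makes it work), and your bookkeeping — including the jump of $\dim\ker d$ to $4$ in the sub-case $\rho=0$, $|B|=|C|\neq0$, where you correctly note the extra generators of $\bigwedge^2\ker d$ all involve $\omega^3$ or $\omega^{\bar3}$ and hence cannot contribute to $d\omega^3$ — is accurate. The price of your route is precisely this extra case analysis, which the bracket computation avoids entirely; what it buys is that everything stays in the language of the structure equations, so no dual basis or bracket needs to be introduced. Your treatment of the two remaining claims matches the paper in substance: the real coframe built from $\Real\omega^j,\Imag\omega^j$ (suitably rescaled) gives $(0,0,0,12,13,23)=\frh_7$, and the uniqueness of the normal form $(\rho,B,C)=(1,1,0)$ — forced because $b_1(\frh_7)=3$ rules out $\epsilon=0$ — yields equivalence of all complex structures via the criterion stated before Proposition~\ref{J-red}; you make this last step more explicit than the paper does.
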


\begin{proof}
Let $Z_1,Z_2,Z_3$ be the dual basis of $\omega^1,\omega^2,\omega^3$.
It is clear that $[\frg,\frg]$ has dimension at least 2 and is contained
in $\langle i(Z_2-\bar{Z}_2), \Real Z_3, \Imag Z_3 \rangle$.
Since $\Real Z_3, \Imag Z_3$ are central elements
and
$$
[i(Z_2-\bar{Z}_2), Z_1]= (\rho-B)i\, Z_3 + \bar{C}i\, \bar{Z}_3,
$$
we conclude that $\frg$ is 2-step nilpotent if and only if $B=\rho$ and
$C$ vanishes.

Let $(\rho, B, C)=(1, 1, 0)$ and let us consider a basis $\{e^1,\ldots, e^6\}$ for $\frg^*$
given by $\omega^1=\frac{1}{\sqrt2}(e^2 + i e^1)$, $\omega^2 = \frac{1}{\sqrt2}e^3 + i e^4$
and $\omega^3 = e^6 + i e^5$.  Now, the Lie algebra $\frg$ is isomorphic to $\frh_7$.
\end{proof}

\subsection{Nilpotent complex structures in the 3-step case} \label{3step}

\noindent
In this section we classify, up to equivalence, nilpotent complex structures on 3-step NLAs $\frg$ of dimension~6.
In this case the coefficient $\epsilon=1$ in the equations~\eqref{nilpotentJ} given in Proposition~\ref{J-red}.
The equivalence of complex structures in terms of the triple $(\rho,B,C)$ is given in the following lemma.

\begin{lemma}\label{nilp-up-to-equiv}
Let $\frg$ be an NLA endowed with a nilpotent complex
structure~$\eqref{nilpotentJ}$ with $\epsilon=1$ and
$(\rho,B,C)\not=(0,0,0)$. Then:
\begin{enumerate}
\item[{\rm (i)}] If the structure is abelian, then
there is a basis $\{\omega^j\}_{j=1}^3$ for $\frg^{1,0}$ satisfying either
\begin{equation}\label{reduced-abel-1}
d\omega^1 = 0,\quad d\omega^2 = \omega^{1\bar{1}}, \quad d\omega^3
=\omega^{2\bar{1}},
\end{equation}
or
\begin{equation}\label{reduced-abel-2}
d\omega^1 = 0,\quad d\omega^2 = \omega^{1\bar{1}}, \quad d\omega^3 =
\omega^{1\bar{2}} + c\, \omega^{2\bar{1}},
\end{equation}
where $c\in \mathbb{R}$, $c\geq 0$.
\item[{\rm (ii)}] In the non-abelian case there is a basis $\{\omega^j\}_{j=1}^3$
for $\frg^{1,0}$ satisfying
\begin{equation}\label{reduced-noabel-nilp}
d\omega^1 = 0,\quad d\omega^2 = \omega^{1\bar{1}}, \quad d\omega^3 =
\omega^{12} + B\, \omega^{1\bar{2}} + c\, \omega^{2\bar{1}},
\end{equation}
where $B\in \mathbb{C}$ and $c\in \mathbb{R}$ such that $c\geq 0$.
\end{enumerate}
Moreover, for any possible choice of parameters $B$ and $c$, each
structure in~\eqref{reduced-abel-1}, \eqref{reduced-abel-2}
and \eqref{reduced-noabel-nilp}
defines an equivalence class of complex structures.
\end{lemma}

\begin{proof}
If the complex structure is abelian then the pair $(B,C)\not= (0,0)$
since $\rho=0$. If $B=0$ then it is clear that one arrives at
equation~(\ref{reduced-abel-1}). If $B\not=0$ then
with respect to the basis $\{ z\, \omega^1, |z|^2\, \omega^2,
{z|z|^2\over B} \, \omega^3 \}$, where $z$ is any non-zero solution
of ${|C| \over |B|}\, \bar{z}= {C\over B}\, z$, the
equations~(\ref{nilpotentJ}) reduce to the form~(\ref{reduced-abel-2}).

For the proof of (ii), we observe that with respect
to $\{ z\, \omega^1, |z|^2\, \omega^2, z|z|^2\, \omega^3 \}$,
where $z\not=0$ satisfies $\bar{z}\, |C|= z\, C$, the
equations~\eqref{nilpotentJ} reduce to~\eqref{reduced-noabel-nilp}.

Finally, the non-equivalence of the different complex structures defined
in~\eqref{reduced-abel-1}, \eqref{reduced-abel-2} and \eqref{reduced-noabel-nilp}
follows by a similar argument to the first part of the proof of Lemma~\ref{lema-eq}.
\end{proof}

The following result provides a classification of abelian structures in
the 3-step case in a slightly more straightforward way than the one given in \cite{ABD}.

\begin{corollary}\label{abelian:h9,h15}
Let $J$ be an abelian structure on an NLA $\frg$ given
by~\eqref{reduced-abel-1} or~\eqref{reduced-abel-2}. Then, $\frg$ is
isomorphic to $\frh_{15}$, except for $c=1$
in which case $\frg\cong \frh_9$.
\end{corollary}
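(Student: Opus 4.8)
The plan is to turn the statement into an explicit change of coframe. For each of the two normal forms I would write down the associated real structure equations and then exhibit a rescaling (or a permutation-plus-rescaling) of the basis of $\frg^{*}$ that matches them verbatim with the defining equations of $\frh_{15}$, respectively $\frh_9$; since such a change commutes with $d$ and is invertible, it is automatically a Lie algebra isomorphism.

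First I would pass to a real coframe by setting $\omega^1=e^1+i\,e^2$, $\omega^2=e^3+i\,e^4$, $\omega^3=e^5+i\,e^6$, so that $\{e^1,\dots,e^6\}$ is a real basis of $\frg^{*}$. Using $\omega^{1\bar{1}}=-2i\,e^{12}$, $\omega^{1\bar{2}}=e^{13}+e^{24}+i\,(e^{23}-e^{14})$ and $\omega^{2\bar{1}}=-\overline{\omega^{1\bar{2}}}=-e^{13}-e^{24}+i\,(e^{23}-e^{14})$, and separating real and imaginary parts, \eqref{reduced-abel-1} becomes
\[
de^1=de^2=de^3=0,\quad de^4=-2\,e^{12},\quad de^5=-e^{13}-e^{24},\quad de^6=-e^{14}+e^{23},
\]
while \eqref{reduced-abel-2} becomes
\[
de^1=de^2=de^3=0,\quad de^4=-2\,e^{12},\quad de^5=(1-c)(e^{13}+e^{24}),\quad de^6=(1+c)(-e^{14}+e^{23}).
\]

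Next I would settle the generic case, that is, \eqref{reduced-abel-1} together with \eqref{reduced-abel-2} for $c\neq1$. Here $1-c$ and $1+c$ are both nonzero, so one may rescale: for \eqref{reduced-abel-1} put $f^1=e^1$, $f^2=e^2$, $f^3=\tfrac12 e^3$, $f^4=-\tfrac12 e^4$, $f^5=-\tfrac12 e^5$, $f^6=\tfrac12 e^6$; for \eqref{reduced-abel-2} keep the same $f^1,\dots,f^4$ but take $f^5=\tfrac{1}{2(1-c)}e^5$, $f^6=\tfrac{1}{2(1+c)}e^6$. A direct computation gives in both cases $df^4=f^{12}$, $df^5=f^{13}-f^{24}$, $df^6=f^{14}+f^{23}$, which are exactly the structure equations of $\frh_{15}$. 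Hence $\frg\cong\frh_{15}$.

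Finally I would treat the exceptional value $c=1$ in \eqref{reduced-abel-2}, which is precisely where $de^5$ vanishes: the equations degenerate to $de^5=0$ and $de^6=-2e^{14}+2e^{23}$, so the first Betti number is now $4$ rather than $3$, which alone rules out $\frg\cong\frh_{15}$. This is the one place where the argument is not purely mechanical: a plain rescaling cannot produce the mixed term in $\frh_9=(0,0,0,0,12,14+25)$, and one must first interchange $e^1$ and $e^2$. Taking $f^1=e^2$, $f^2=e^1$, $f^3=e^5$, $f^4=-\tfrac12 e^3$, $f^5=\tfrac12 e^4$, $f^6=-\tfrac14 e^6$ one checks $df^1=df^2=df^3=df^4=0$, $df^5=f^{12}$ and $df^6=f^{14}+f^{25}$, so $\frg\cong\frh_9$. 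Since $b_1(\frh_{15})=3\neq4=b_1(\frh_9)$, the two possibilities are mutually exclusive, and the corollary follows.
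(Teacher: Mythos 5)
Your proposal is correct and follows essentially the same route as the paper: both pass from the complex normal forms \eqref{reduced-abel-1}--\eqref{reduced-abel-2} to real structure equations via an explicit choice of real coframe and then match them, after a rescaling (and, for $c=1$, a permutation), with the defining equations of $\frh_{15}$ and $\frh_9$. All of your coefficient computations check out, including the identification of $c=1$ as exactly the value where $de^5$ degenerates.
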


\begin{proof}
For the equations~\eqref{reduced-abel-2}, let us consider a basis
$\{e^1,\ldots,e^6 \}$ for $\frg^*$ given by $\omega^1 = -e^1 +
i\, e^2$, $\omega^2 = 2e^3 + 2i\, e^4$ and $\omega^3=2e^5 + 2(c+1)i\, e^6$.
Then, $e^1,e^2,e^3$ are closed,
$de^4=e^{12}$, $de^5=(c-1)(e^{13}+e^{42})$
and $de^6=e^{14}+e^{23}$. Thus, if $c\not= 1$ then
the Lie algebra $\frg$ is isomorphic to $\frh_{15}$; otherwise,
$\frg\cong\frh_9$. Finally, it is easy to check that the Lie algebra
$\frg$ underlying~(\ref{reduced-abel-1}) is also isomorphic to
$\frh_{15}$.
\end{proof}

Notice that the family~(\ref{reduced-noabel-nilp}) includes the case $\frh_7$
precisely for $\rho=B=1$ and $c=0$ as it is shown in Proposition~\ref{h7}.
Next we determine the Lie algebras underlying the complex equations~(\ref{reduced-noabel-nilp})
in the remaining cases. They all have first Betti number
equal to 3 and are nilpotent in step $3$. Also notice that
the dimension of their center is at least 2.

\begin{proposition}\label{h16}
Let $J$ be a nilpotent complex structure on a $3$-step NLA $\frg$
given by~\eqref{reduced-noabel-nilp}. Then $\frg$ has $3$-dimensional
center if and only if $|B|=1$, $B\not=1$ and $c=0$. In such case
$\frg$ is isomorphic to $\frh_{16}$.
\end{proposition}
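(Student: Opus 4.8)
The plan is to translate the structure equations~\eqref{reduced-noabel-nilp} into the Lie bracket on $\gc$, read off the centre $\mathfrak z$ of $\frg$ as a function of $(B,c)$, and, in the relevant case, produce an explicit real basis realising $\frh_{16}$. Concretely, let $\{Z_1,Z_2,Z_3\}$ be the frame of $\gc$ dual to $\{\omega^1,\omega^2,\omega^3\}$ and use $d\alpha(U,V)=-\alpha([U,V])$. Since neither $\omega^3$ nor $\overline{\omega^3}$ occurs on the right-hand sides of~\eqref{reduced-noabel-nilp}, both $Z_3$ and $\bar Z_3$ are central, so $\langle\Real Z_3,\Imag Z_3\rangle$ is a $2$-dimensional subspace of $\mathfrak z$; the remaining brackets come out as
\[
[Z_1,Z_2]=-Z_3,\quad [Z_1,\bar Z_1]=-Z_2+\bar Z_2,\quad [Z_1,\bar Z_2]=-B\,Z_3+c\,\bar Z_3,\quad [Z_2,\bar Z_1]=-c\,Z_3+\bar B\,\bar Z_3,\quad [Z_2,\bar Z_2]=0.
\]

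A real vector $X=\alpha_1 Z_1+\alpha_2 Z_2+\alpha_3 Z_3+\overline{\alpha_1 Z_1+\alpha_2 Z_2+\alpha_3 Z_3}$ lies in $\mathfrak z$ if and only if $[X,Z_1]=[X,Z_2]=0$ (complexify, then use $\bar X=X$ and the centrality of $Z_3,\bar Z_3$), and inserting the brackets above this reduces to $\alpha_1=0$, $\alpha_2+B\,\bar\alpha_2=0$ and $c\,\bar\alpha_2=0$, with $\alpha_3\in\C$ free. If $c\neq 0$, or if $c=0$ but $|B|\neq 1$, the conditions on $\alpha_2$ force $\alpha_2=0$, so $\mathfrak z=\langle\Real Z_3,\Imag Z_3\rangle$ has dimension $2$; if $c=0$ and $|B|=1$, the $\R$-linear map $z\mapsto z+B\bar z$ on $\C$ is non-zero (otherwise $z=1$ forces $B=-1$ and $z=i$ forces $B=1$) and has non-trivial kernel (take $z$ with $z^2=-B$), hence a $1$-dimensional kernel, so $\dim\mathfrak z=3$. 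Finally, Proposition~\ref{h7} applied with $\rho=1$ shows that the hypothesis that $\frg$ is $3$-step excludes the case $B=1,\ c=0$ (which yields the $2$-step algebra $\frh_7$); therefore, under this hypothesis, ``$c=0$ and $|B|=1$'' is equivalent to ``$|B|=1$, $B\neq 1$ and $c=0$'', which is the first assertion.

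For the identification with $\frh_{16}$, assume $c=0$, $|B|=1$, $B\neq 1$, write $B=e^{2i\psi}$ with $\psi\in(0,\pi)$ (so $\sin\psi\neq 0$), and set $\alpha^{2j-1}=\Real\omega^j$, $\alpha^{2j}=\Imag\omega^j$. Then $\alpha^1,\alpha^2,\alpha^3$ are closed, $d\alpha^4=-2\,\alpha^1\wedge\alpha^2$, and expanding $d\omega^3=\omega^{12}+B\,\omega^{1\bar2}$ into real and imaginary parts, with the half-angle identities $1\pm\cos 2\psi=2\cos^2\psi,\;2\sin^2\psi$ and $\sin 2\psi=2\sin\psi\cos\psi$, gives
\[
d\alpha^5=2\,(\cos\psi\,\alpha^1-\sin\psi\,\alpha^2)\wedge(\cos\psi\,\alpha^3+\sin\psi\,\alpha^4),\qquad d\alpha^6=2\,(\sin\psi\,\alpha^1+\cos\psi\,\alpha^2)\wedge(\cos\psi\,\alpha^3+\sin\psi\,\alpha^4).
\]
Replacing $\{\alpha^1,\alpha^2\}$ by the rotated closed pair $\{\cos\psi\,\alpha^1-\sin\psi\,\alpha^2,\,\sin\psi\,\alpha^1+\cos\psi\,\alpha^2\}$, keeping $\alpha^3$, using $\cos\psi\,\alpha^3+\sin\psi\,\alpha^4$ in place of $\alpha^4$ (a valid change of basis since $\sin\psi\neq 0$), and rescaling $\alpha^5,\alpha^6$ and this last form by suitable nonzero constants, one reaches a basis $\{e^1,\dots,e^6\}$ of $\frg^*$ with $de^1=de^2=de^3=0$, $de^4=e^{12}$, $de^5=e^{14}$, $de^6=e^{24}$; that is, $\frg\cong\frh_{16}$.

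The bracket computation and the count of $\dim\mathfrak z$ are routine, and the reduction to $B\neq 1$ is immediate from Proposition~\ref{h7}; I expect the main obstacle to be the last step — exhibiting a single real change of basis valid for all unit $B\neq 1$ — where the point is that a rotation by $\psi=\tfrac12\arg B$ in $\langle\Real\omega^1,\Imag\omega^1\rangle$ together with a shear in $\langle\Real\omega^2,\Imag\omega^2\rangle$ simultaneously puts $d\alpha^5$ and $d\alpha^6$ into the displayed decomposable form, the half-angle identities being exactly what make the coefficients match.
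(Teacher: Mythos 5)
Your proof is correct and follows essentially the same route as the paper: compute the brackets from~\eqref{reduced-noabel-nilp}, show that a central element outside $\langle\Real Z_3,\Imag Z_3\rangle$ forces $\alpha_1=0$, $c=0$ and a nontrivial solution of $\alpha_2+B\bar\alpha_2=0$ (hence $|B|=1$, with $B\neq1$ excluded by the $3$-step hypothesis via Proposition~\ref{h7}), and then factor $d\omega^3=\omega^1\wedge(\omega^2+B\,\omega^{\bar2})$ to build a real basis realising $\frh_{16}$. The only cosmetic difference is that you parametrise $B=e^{2i\psi}$ and use a rotation plus shear, whereas the paper writes the change of basis directly in terms of $B$ (e.g. $e^1+i e^2=i(B-1)\omega^1$); both implement the same factorisation.
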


\begin{proof}
Let $Z_1,Z_2,Z_3$ be the dual basis of $\omega^1,\omega^2,\omega^3$.
Then, $\Real(Z_3)$ and $\Imag(Z_3)$ are central elements. Let $T=
\lambda_1 Z_1 + \bar{\lambda}_1 \bar{Z}_1 + \lambda_2 Z_2 +
\bar{\lambda}_2 \bar{Z}_2$ be another non-zero element in the center
of $\frg$, where $(\lambda_1,\lambda_2) \in \mathbb{C}^2-\{(0,0)\}$. It
follows from~(\ref{reduced-noabel-nilp}) that
$$
0=[T,Z_1]= \bar{\lambda}_1 Z_2 -  \bar{\lambda}_1 \bar{Z}_2 -
(\lambda_2- B \bar{\lambda}_2) Z_3 - c \bar{\lambda}_2 \bar{Z}_3,
$$
which implies $\lambda_1=0$, $c\lambda_2=0$ and $\lambda_2=B
\bar{\lambda}_2$. Therefore, $c=0$ and $|B|=1$ in order the center
to be 3-dimensional, because otherwise the equation
$\lambda_2=B\bar{\lambda}_2$ would have trivial solution. Moreover, $B\not=1$
because $\frg$ is nilpotent in step 3.

Finally, since $|B|=1$ and $B\not=1$, let us consider the basis $\{
e^1,\ldots,e^6 \}$ for $\frg^*$ given by: $e^1+i\,
e^2 = i(B-1) \omega^1$, $e^3 = \omega^2 +
\omega^{\bar{2}}$, $e^4 = {1-\Real B\over 1-B} i (\omega^2 + B\,
\omega^{\bar{2}})$, $e^5 + i\, e^6 = (1-\Real B) \omega^3$.
Then, we can write the differential of $\omega^3$ in the form
$$
d\omega^3 = \omega^1\wedge(\omega^2+B\,\omega^{\bar{2}}) = \left(
{i(B-1)\over 1-\Real B} \omega^1 \right) \wedge \left( {1-\Real B\over
1-B} i (\omega^2 + B\, \omega^{\bar{2}}) \right),
$$
which implies that $e^1,e^2,e^3$ are closed,
$de^4=e^{12}$, $de^5=e^{14}$ and
$de^6=e^{24}$, i.e. $\frg\cong \frh_{16}$.
\end{proof}

Next we establish the conditions for the coefficients $B$ and $c$
in terms of the dimension of $\frg^2=[\frg,[\frg,\frg]]$.

\begin{lemma}\label{dim-g2}
Let $J$ be a complex structure on a $3$-step NLA $\frg$ given
by~\eqref{reduced-noabel-nilp}. Then:
\begin{enumerate}
\item[{\rm (i)}] If $c=|B-1|\not=0$, then $\dim \frg^2=1$.
\item[{\rm (ii)}] If $c\not=|B-1|$, then $\dim \frg^2=2$.
\end{enumerate}
\end{lemma}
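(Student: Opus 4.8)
The plan is to compute the descending central series of $\frg$ directly from the structure equations~\eqref{reduced-noabel-nilp}, working with the real basis dual to $Z_1,Z_2,Z_3$ (the duals of $\omega^1,\omega^2,\omega^3$). Since the Chevalley--Eilenberg differential encodes the brackets, the equations $d\omega^1=0$, $d\omega^2=\omega^{1\bar1}$, $d\omega^3=\omega^{12}+B\,\omega^{1\bar2}+c\,\omega^{2\bar1}$ determine $[\frg,\frg]$ and $\frg^2=[\frg,[\frg,\frg]]$ completely. First I would record that $\Real Z_3$ and $\Imag Z_3$ are central, so $\frg^2\subseteq\langle \Real Z_3,\Imag Z_3\rangle$ is at most $2$-dimensional, and the whole question reduces to computing which combinations of $\Real Z_3,\Imag Z_3$ actually arise as brackets of an element of $\frg$ with an element of $[\frg,\frg]$.

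Next I would identify $[\frg,\frg]$. From $d\omega^2=\omega^{1\bar1}$ the bracket $[Z_1,\bar Z_1]$ produces (up to sign and the usual conventions) the element $i(Z_2-\bar Z_2)=\Imag Z_2$ direction, while from $d\omega^3=\omega^{12}+B\,\omega^{1\bar2}+c\,\omega^{2\bar1}$ the brackets $[Z_1,Z_2]$, $[Z_1,\bar Z_2]$, $[\bar Z_1,Z_2]$ produce $\Real Z_3$ and $\Imag Z_3$. So $[\frg,\frg]=\langle i(Z_2-\bar Z_2),\Real Z_3,\Imag Z_3\rangle$ is $3$-dimensional. Then $\frg^2=[\frg,[\frg,\frg]]$ is spanned by the brackets of $Z_1,\bar Z_1,Z_2,\bar Z_2$ (the non-central generators) with $i(Z_2-\bar Z_2)$, since $\Real Z_3,\Imag Z_3$ are central. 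The key computation is therefore the single bracket
$$
[i(Z_2-\bar Z_2),\,Z_1]\ =\ (\rho - B)\,i\,Z_3\ +\ \bar C\,i\,\bar Z_3,
$$
which in the reduced normalization $\rho=1$, $C=c\in\R$ reads $[i(Z_2-\bar Z_2),Z_1]=(1-B)i\,Z_3+c\,i\,\bar Z_3$ (this is exactly the formula already used in the proof of Proposition~\ref{h7}, applied here with $\rho=1$). Splitting into real and imaginary parts, $\frg^2$ is spanned by the real and imaginary parts of $(1-B)i\,Z_3+c\,i\,\bar Z_3$ as one varies $Z_1\mapsto \bar Z_1$ as well; one gets $\frg^2=\langle \Real((1-B)iZ_3+c\,i\bar Z_3),\ \Imag((1-B)iZ_3+c\,i\bar Z_3)\rangle$ inside $\langle\Real Z_3,\Imag Z_3\rangle$. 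Writing $B-1=u+iv$, the two spanning vectors are linear combinations of $\Real Z_3,\Imag Z_3$ whose coefficient matrix has determinant $c^2-(u^2+v^2)=c^2-|B-1|^2$ (up to sign).

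The conclusion then falls out by linear algebra: $\dim\frg^2=2$ exactly when this determinant is nonzero, i.e. $c\ne|B-1|$, giving (ii); and $\dim\frg^2=1$ when the determinant vanishes but the vectors are not both zero, i.e. $c=|B-1|\ne0$, giving (i). (When $c=|B-1|=0$ one would get $\dim\frg^2=0$, but that case is excluded here since the hypothesis $(\rho,B,C)\ne(0,0,0)$ with $\rho=1$ still allows $B=1,c=0$ — that is precisely the $\frh_7$ case with $\dim\frg^2=1$ handled by the convention $c=|B-1|$ being vacuous; I would double-check that the statement's hypothesis $c=|B-1|\ne0$ in (i) sidesteps this.) I expect the main obstacle to be purely bookkeeping: pinning down the precise constants and signs in the bracket $[i(Z_2-\bar Z_2),Z_1]$ from the differential convention $d\alpha(X,Y)=-\alpha([X,Y])$, and making sure the real/imaginary part decomposition of $\frg^2$ is carried out correctly so that the determinant $c^2-|B-1|^2$ genuinely controls the rank. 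Once that bracket formula is in hand, both cases are a one-line rank computation.
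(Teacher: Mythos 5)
Your proposal is correct and follows essentially the same route as the paper: both compute $\frg^2=[Z_2-\bar Z_2,\frg]$, observe it sits inside the central plane $\langle \Real Z_3,\Imag Z_3\rangle$, and read off the rank from the determinant $(1-B)(1-\bar B)-c^2=|B-1|^2-c^2$ of the coefficient matrix of the bracket with $Z_1$ and its conjugate. (Only a cosmetic slip in your parenthetical aside: in the excluded case $B=1$, $c=0$ the algebra is $\frh_7$, which is $2$-step, so there $\dim\frg^2=0$, not $1$; this does not affect the argument since that case is ruled out by the $3$-step hypothesis.)
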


\begin{proof}
From~(\ref{reduced-noabel-nilp}) we have that
$$
\frg^2=[Z_2-\bar{Z}_2, \frg] = \langle (1-B)Z_3 + c\,\bar{Z}_3,\
c\,Z_3 + (1-\bar{B}) \bar{Z}_3 \rangle.
$$
It is clear that $\dim \frg^2=2$ if and only if
$(1-B)(1-\bar{B})-c^2\not=0$.
\end{proof}

Notice that if $c=|B-1|\not=0$ then $\frg$ is isomorphic to $\frh_{10}$,
$\frh_{11}$ or $\frh_{12}$.
Since the case $c=0\not=|B-1|$, $|B|=1$ corresponds to
$\frg\cong\frh_{16}$ by Proposition~\ref{h16}, we conclude that for $c\not=|B-1|$ and $(c,|B|)\not=(0,1)$
the Lie algebra $\frg$ is isomorphic to $\frh_{13}$,
$\frh_{14}$ or $\frh_{15}$.

In order to distinguish the underlying Lie algebras, we use the following argument for
$\frg=\frh_k$, $10\leq k\leq 15$.
Let $\alpha(\frg)$ be the number of linearly independent elements $\tau$ in $\bigwedge^2(\frg^*)$ such that $\tau\in d(\frg^*)$ and $\tau\wedge\tau=0$.  This number can be identified with the number of linearly independent exact $2$-forms which are decomposable, that is,
$\alpha(\frh_k)=3$ for $k=10, 12, 13$,  $\alpha(\frh_k)=2$ for $k=11, 14$ and $\alpha(\frh_k)=1$ for $k=15$.

If $\tau$ is any exact element in $\bigwedge^2(\frg^*)$ then $\tau=\mu\,d\omega^2 + \bar\mu\,d\omega^{\bar2} + \nu\,d\omega^3 + \bar\nu\,d\omega^{\bar3}$, for some $\mu,\nu\in\C$, and by~\eqref{reduced-noabel-nilp} we have
$$
\tau = (\mu-\bar\mu)\,\omega^{1\bar1} + \nu\,\omega^{12} + (\nu B-\bar\nu c)\,\omega^{1\bar2} + (\nu c - \bar\nu\bar B)\,\omega^{2\bar1} + \bar\nu\,\omega^{\bar1\bar2}.
$$
A direct calculation shows that
$$\tau\wedge\tau = 2\left(|\nu|^2(1-|B|^2-c^2) + c\,\left(\nu^2B + \bar\nu^2 \bar B\right)\right)\,\omega^{12\bar1\bar2}.$$
Thus, if we denote $p=\Real\,\nu$ and $q=\Imag\,\nu$, then $\tau\wedge\tau=0$ if and only if
\begin{equation}\label{ec-grado2}
\left(1\!-\!|B|^2\!-\!c^2\!+2c\,\Real\,B\right)p^2- \left(4c\,\Imag\,B\right)pq + \left(1\!-\!|B|^2\!-\!c^2\!-2c\,\Real\,B\right)q^2=0.
\end{equation}
Observe that the trivial solution $p=q=0$ corresponds to $\tau = 2i\,\Imag\,\mu\, \omega^{1\bar1}$,
according to the fact that $\alpha(\frg)\geq 1$.

\begin{proposition}\label{h10-11-12}
Let $J$ be a complex structure on a $3$-step NLA $\frg$ given
by~\eqref{reduced-noabel-nilp} with $c=|B-1|\not=0$.
Then:
\begin{enumerate}
\item[{\rm (i)}] $\frg\cong \frh_{10}$ if and only if $B=0$;
\item[{\rm (ii)}] $\frg\cong \frh_{11}$ if and only if $B\in \mathbb{R}-\{0,1\}$;
\item[{\rm (iii)}] $\frg\cong \frh_{12}$ if and only if $\Imag B\not=0$.
\end{enumerate}
In particular, all the complex structures on $\frh_{10}$ are equivalent.
\end{proposition}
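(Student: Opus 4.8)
The plan is to read off which of $\frh_{10},\frh_{11},\frh_{12}$ occurs from the invariant $\alpha(\frg)$ together with the quadratic form \eqref{ec-grado2}, using that we already know $\frg$ is one of these three, with $\alpha(\frh_{10})=\alpha(\frh_{12})=3$ and $\alpha(\frh_{11})=2$. First I would substitute $c^2=|B-1|^2=|B|^2-2\,\Real B+1$ into the three coefficients of the binary quadratic form $Q(p,q)$ in \eqref{ec-grado2}. A short computation then shows that its discriminant is a positive multiple of $(\Imag B)^2$, and that $Q$ vanishes identically if and only if $B=0$: indeed when $\Imag B=0$ one has $c=|B-1|\neq0$ and the two diagonal coefficients are $2\,\Real B(1+c-\Real B)$ and $2\,\Real B(1-c-\Real B)$, whose product is $0$ (as $(1-\Real B)^2-c^2=0$ for real $B$) and which both vanish only for $\Real B=0$, while for $\Imag B\neq0$ the off-diagonal coefficient $-4c\,\Imag B$ is already nonzero. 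This yields $\alpha(\frg)$ in each regime: if $\Imag B\neq0$ the discriminant is strictly positive, $Q$ has rank two and vanishes on two distinct lines, the decomposable exact $2$-forms span the full $3$-dimensional space $d(\frg^*)$, and $\alpha(\frg)=3$; if $B$ is real and nonzero, $Q$ is a nonzero degenerate form vanishing on a single line and $\alpha(\frg)=2$; and if $B=0$ then $Q\equiv0$, every exact $2$-form is decomposable, and $\alpha(\frg)=3$.

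Part (ii) is now immediate: $B\in\mathbb{R}-\{0,1\}$ gives $\alpha(\frg)=2$, hence $\frg\cong\frh_{11}$, and conversely $\frg\cong\frh_{11}$ forces $\alpha(\frg)=2$, which by the case analysis above only happens for $B$ real and nonzero (and then automatically $B\neq1$ since $c=|B-1|\neq0$). For part (i) the point is to separate $\frh_{10}$ from $\frh_{12}$, both of which have $\alpha=3$: when $B=0$ every exact $2$-form of $\frg$ is decomposable, whereas $\frh_{12}$ admits a non-decomposable exact $2$-form, for instance $e^{12}+e^{13}+e^{24}=d(e^4+e^5+e^6)$ with $(e^{12}+e^{13}+e^{24})\wedge(e^{12}+e^{13}+e^{24})=-2\,e^{1234}\neq0$; since $\frh_{11}$ is excluded by $\alpha$, this forces $\frg\cong\frh_{10}$. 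Conversely, if $\frg\cong\frh_{10}$ then $d(\frg^*)$ corresponds to $e^1\wedge\langle e^2,e^3,e^4\rangle$, all of whose elements are decomposable, so $Q$ must be identically zero, i.e. $B=0$. The uniqueness assertion in (i) then follows from Lemma~\ref{nilp-up-to-equiv}(b), since $\frg\cong\frh_{10}$ pins down $(1,B,c)=(1,0,1)$. Finally, part (iii) is the remaining case: if $\Imag B\neq0$ then $\alpha(\frg)=3$ and not every exact $2$-form is decomposable, so $\frg$ is neither $\frh_{11}$ nor $\frh_{10}$, hence $\frg\cong\frh_{12}$; conversely $\frg\cong\frh_{12}$ forces $\alpha(\frg)=3$, so $B=0$ or $\Imag B\neq0$ by the first paragraph, and $B=0$ is impossible by part (i).

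The only genuinely computational step is the simplification of the coefficients of \eqref{ec-grado2} under the constraint $c=|B-1|$; the main thing to watch is the bookkeeping — correctly matching each of the three possibilities for $Q$ (identically zero, nonzero degenerate, nondegenerate) to the corresponding value of $\alpha$ and, in the $\alpha=3$ case, distinguishing $\frh_{10}$ from $\frh_{12}$ by decomposability of exact $2$-forms rather than by $\alpha$ alone. No explicit real basis for the $\frh_k$ is needed, although if preferred the identification $\frg\cong\frh_{10}$ in the case $B=0$, $c=1$ can be verified directly by exhibiting a real basis, as in Corollary~\ref{abelian:h9,h15} and Proposition~\ref{h16}.
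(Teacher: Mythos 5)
Your proof is correct and follows essentially the same route as the paper's: restrict to $\{\frh_{10},\frh_{11},\frh_{12}\}$ via Lemma~\ref{dim-g2}, then read off the algebra from the quadratic form~\eqref{ec-grado2} under the constraint $c=|B-1|$, whose discriminant is indeed $16(\Imag B)^2$ and which vanishes identically exactly when $B=0$. The only (harmless) variation is in separating $\frh_{10}$ from $\frh_{12}$ when $Q\equiv 0$: the paper identifies $\frh_{10}$ directly from the resulting structure equations $d\omega^3=(\omega^1-\omega^{\bar 1})\wedge\omega^2$, whereas you argue by exclusion, exhibiting a non-decomposable exact $2$-form on $\frh_{12}$.
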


\begin{proof}
Since $c=|B-1|\not=0$, it follows from Lemma~\ref{dim-g2}
that $\frg$ is isomorphic to $\frh_{10}$, $\frh_{11}$ or $\frh_{12}$.

Firstly, $\frg\cong\frh_{10}$ if and only if the coefficients in equation~\eqref{ec-grado2} vanish.
In fact, for $\frh_{10}$ we have by Theorem~\ref{clasif-complex} that
$\nu\,d\omega^3 + \bar\nu\,d\omega^{\bar3}\in \langle e^{12},e^{13},e^{14} \rangle$ for any $\nu\in\C$
so any pair $(p,q)\in \R^2$ solves the equation~\eqref{ec-grado2}, which implies the vanishing of
its coefficients. Conversely, if the coefficients $1-|B|^2-c^2+2c\,\Real\,B$, $c\,\Imag\,B$ and $1-|B|^2-c^2-2c\,\Real\,B$
are all zero
then neccesarily $B=0$ and $c=1$, that is, $d\omega^1 =0$, $d\omega^2 =
\omega^{1\bar{1}}$ and $d\omega^3 =(\omega^{1} - \omega^{\bar{1}})\wedge \omega^{2}$, and therefore the Lie algebra is
isomorphic to $\frh_{10}$.

On the other hand, notice that if $c=|B-1|\not=0$ and $(B,c)\not=(0,1)$ then~\eqref{ec-grado2} is a second degree equation in $p$ or $q$.
Since its discriminant is a positive multiple of $(\Imag\,B)^2$,
if $\Imag\,B\not=0$ then we get two independent solutions and $\alpha(\frg)=3$, that is, $\frg\cong \frh_{12}$.
Finally, for $\Imag\,B=0$ the equation~\eqref{ec-grado2} provides one solution and $\alpha(\frg)=2$,
so $\frg\cong \frh_{11}$.
\end{proof}

\begin{proposition}\label{h13-14-15}
Let $J$ be a complex structure on a $3$-step NLA $\frg$ given
by~\eqref{reduced-noabel-nilp} with $c\not=|B-1|$ such that $(c,|B|)\not=(0,1)$.
Then:
\begin{enumerate}
\item[{\rm (i)}] $\frg\cong \frh_{13}$ if and only if $c^4-2(|B|^2+1)c^2+(|B|^2-1)^2<0$;
\item[{\rm (ii)}] $\frg\cong \frh_{14}$ if and only if $c^4-2(|B|^2+1)c^2+(|B|^2-1)^2=0$;
\item[{\rm (iii)}] $\frg\cong \frh_{15}$ if and only if $c^4-2(|B|^2+1)c^2+(|B|^2-1)^2>0$.
\end{enumerate}
\end{proposition}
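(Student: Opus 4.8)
The plan is to reduce the identification of $\frg$ among $\frh_{13},\frh_{14},\frh_{15}$ to the computation of the invariant $\alpha(\frg)$, and then to read $\alpha(\frg)$ off from the binary quadratic equation~\eqref{ec-grado2}. By the discussion preceding the statement (Theorem~\ref{clasif-complex} together with Lemma~\ref{dim-g2} and Proposition~\ref{h16}), under the hypotheses $c\neq|B-1|$ and $(c,|B|)\neq(0,1)$ the Lie algebra $\frg$ is isomorphic to exactly one of $\frh_{13}$, $\frh_{14}$, $\frh_{15}$, and these are distinguished by $\alpha(\frh_{13})=3$, $\alpha(\frh_{14})=2$ and $\alpha(\frh_{15})=1$. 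So it suffices to show that $\alpha(\frg)$ equals $3$, $2$ or $1$ according to the sign of $c^4-2(|B|^2+1)c^2+(|B|^2-1)^2$.

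First I would describe all exact decomposable $2$-forms explicitly. Since $d\omega^1=0$ and $d\omega^2=\omega^{1\bar1}$, every element of $d(\frg^*)$ has the form $\tau=i\,t\,\omega^{1\bar1}+\nu\,d\omega^3+\bar\nu\,d\omega^{\bar3}$ with $t\in\R$ and $\nu\in\C$, and $d(\frg^*)$ is $3$-dimensional (the three forms $\omega^{1\bar1}$, $d\omega^3$, $d\omega^{\bar3}$ carry, respectively, the only $\omega^{1\bar1}$-, $\omega^{12}$- and $\omega^{\bar1\bar2}$-components, hence are independent). Because $\omega^{1\bar1}\wedge\omega^{1\bar1}=0$ and $\omega^{1\bar1}\wedge d\omega^3=0$, one gets $\tau\wedge\tau=(\nu\,d\omega^3+\bar\nu\,d\omega^{\bar3})\wedge(\nu\,d\omega^3+\bar\nu\,d\omega^{\bar3})$, so $\tau\wedge\tau=0$ precisely when $(p,q)=(\Real\nu,\Imag\nu)$ solves~\eqref{ec-grado2}. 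Moreover the map $\nu\mapsto\nu\,d\omega^3+\bar\nu\,d\omega^{\bar3}$ is injective ($\nu$ is the $\omega^{12}$-component of its image) and its image meets $\langle\omega^{1\bar1}\rangle$ only in $0$. Hence, writing $k$ for the number of distinct real solution lines of~\eqref{ec-grado2}, the form $i\,\omega^{1\bar1}$ together with one representative $\nu_j\,d\omega^3+\bar\nu_j\,d\omega^{\bar3}$ for each line constitutes a maximal $\R$-independent family of decomposable exact $2$-forms, so $\alpha(\frg)=1+k$.

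Next I would analyse the quadratic form appearing on the left of~\eqref{ec-grado2}. Its discriminant is
\[
(4c\,\Imag B)^2-4\,(1-|B|^2-c^2+2c\,\Real B)(1-|B|^2-c^2-2c\,\Real B)=16\,c^2|B|^2-4\,(1-|B|^2-c^2)^2,
\]
and a short computation identifies this with $-4\bigl(c^4-2(|B|^2+1)c^2+(|B|^2-1)^2\bigr)$. Under our hypotheses this quadratic form is not identically zero: vanishing of all three of its coefficients would force $c=0,\,|B|=1$ or $B=0,\,c=1$, both excluded (this is exactly the $\frh_{10}$ case). Therefore it is positive/negative definite, of rank one, or indefinite according as its discriminant is negative, zero, or positive, i.e. $k=0,1,2$ respectively. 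Combining with $\alpha(\frg)=1+k$ and with the values $\alpha(\frh_{15})=1$, $\alpha(\frh_{14})=2$, $\alpha(\frh_{13})=3$, and rewriting the discriminant via the identity above, we conclude that $\frg\cong\frh_{13},\frh_{14},\frh_{15}$ exactly when $c^4-2(|B|^2+1)c^2+(|B|^2-1)^2$ is, respectively, negative, zero, positive, as claimed.

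The step I expect to take the real work is the second one: making sure the maximal number of $\R$-independent decomposable exact $2$-forms is genuinely $1+k$, and is not reduced by a hidden linear relation among the forms $\nu\,d\omega^3+\bar\nu\,d\omega^{\bar3}$ or by accidental overlap with the $\omega^{1\bar1}$ direction --- this is what the injectivity remark takes care of. The algebraic simplification of the discriminant and the verification that the degenerate $\frh_{10}$ case is excluded by the standing hypotheses are then routine.
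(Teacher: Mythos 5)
Your proposal is correct and follows essentially the same route as the paper: both reduce the identification to the invariant $\alpha(\frg)$ and count the solution lines of~\eqref{ec-grado2} via a discriminant computation equal to $-4\Delta$ up to a positive factor (the paper takes discriminants in $p$ and in $q$ separately, you take the discriminant of the binary form, which is a cosmetic difference). The only blemish is the parenthetical remark that the two degenerate cases are ``exactly the $\frh_{10}$ case'': the case $B=0$, $c=1$ is $\frh_{10}$, but $c=0$, $|B|=1$ is the $\frh_{16}$ case of Proposition~\ref{h16}; both are nonetheless excluded by the standing hypotheses, so the argument stands.
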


\begin{proof}
Since $c\not=|B-1|$ and $(c,|B|)\not=(0,1)$, it follows from Lemma~\ref{dim-g2} and Proposition~\ref{h16} that $\frg$ is
isomorphic to $\frh_{13}$, $\frh_{14}$ or $\frh_{15}$.

Notice that the condition $(c,|B|)\not=(0,1)$ implies that the coefficients of $p^2$ and $q^2$
in equation~\eqref{ec-grado2} cannot be both zero, so~\eqref{ec-grado2} is always a second degree equation.
Let
$$\Delta=c^4-2(|B|^2+1)c^2+(|B|^2-1)^2.$$
Since the discriminant as a second degree equation in $p$ is equal to $-4q^2\Delta$
and the discriminant as a second degree equation in $q$ equals $-4p^2\Delta$, the number of
independent solutions of equation~\eqref{ec-grado2} depends on the sign of $\Delta$.
Thus, for $\Delta<0$ there exist two such solutions and thus $\frg\cong \frh_{13}$,
for $\Delta=0$ there exists only one such solution and $\frg\cong \frh_{14}$, and finally
for $\Delta>0$ there is no solution and $\alpha(\frg)=1$, which implies that $\frg\cong \frh_{15}$.
\end{proof}

\section{Classification of complex structures} \label{ClassifGeneral}

\noindent
As a consequence of our previous study, in this section we present in Table 1
the classification of nilpotent complex structures
up to equivalence on 6-dimensional NLAs.
In the table the closed (1,0)-form $\omega^1$ does not appear, and the coefficients
$c,\lambda\in \mathbb{R}^{\geq 0}$ and $B,D\in \mathbb{C}$ with $\Imag D\geq 0$.

In Table 1 we have also included the classification of abelian structures~$J$ on 6-dimensional NLAs obtained in~\cite{ABD}.
In the 3-step case we use directly the equations given in Lemma~\ref{nilp-up-to-equiv} and Corollary~\ref{abelian:h9,h15},
but in the 2-step case we have written the complex structure equations of any abelian $J$ in a form that fits
in our Proposition~\ref{nueva}. More precisely, in the 2-step case we consider first the following reduction of the equations \eqref{epsilonzero-red-new-bis} of any abelian complex structure.

\begin{corollary}\label{e=0}
If $J$ is abelian and $\frg$ is $2$-step then
there is a basis $\{\omega^j\}_{j=1}^3$ for~$\frg^{1,0}$
satisfying one of the following equations:
\begin{enumerate}
\item[{\rm (i)}] $d\omega^1 = d\omega^2 = d\omega^3 =0$;
\item[{\rm (ii)}] $d\omega^1 = d\omega^2 =0$, \ $d\omega^3=
\omega^{1\bar{1}} + D\,\omega^{2\bar{2}}$, with  $D\in \mathbb{C},\ |D|=1, \ \Imag D\geq 0$;
\item[{\rm (iii)}] $d\omega^1 = d\omega^2 =0$, \ $d\omega^3 =
\omega^{1\bar{1}} + \omega^{1\bar{2}} +
D\,\omega^{2\bar{2}}$, with $D\in \mathbb{C}, \ \Imag D\geq 0$.
\end{enumerate}
\end{corollary}

\begin{proof}
Suppose $\rho=0$ in \eqref{epsilonzero-red-new-bis}. If in addition $\lambda=0$, then in terms of the basis
$\{\sqrt{|D|}\;\omega^1,\;|D|\,\omega^2,\;|D|\,\omega^3\}$ we obtain (i) or (ii), whereas if $\lambda\neq0$ then we get
equations (iii) with respect to
$\{\omega^1,\;\lambda\,\omega^2,\;\omega^3\}$.
\end{proof}

Next we illustrate how to rewrite the complex structure equations of any abelian~$J$ on the Lie algebra $\frh_5$
in a form that fits in our Corollary~\ref{e=0}.
By~\cite[Theorem 3.5]{ABD}
there is, up to isomorphism, one family
$J_t$, $t\in (0,1]$, of abelian structures given by
$$
J_t e^1=e^3,\quad J_t e^2=e^4,\quad J_t e^5= \frac{1}{t}\, e^6,
$$
where $\{e^1,\ldots, e^6\}$ is the real basis of $\frh_5$ in Theorem~\ref{clasif-complex}.
Let us consider the basis of $(1,0)$-forms $\{\sigma^1, \sigma^2, \sigma^3\}$ given by
$
\sigma^1 = e^1 - i J_t e^1 = e^1-i\,e^3$,  $\sigma^2 = e^2 - i J_t e^2=e^2-i\, e^4$ and $ \sigma^3 = 2i(e^5 - i J_t e^5) = 2i\, e^5+\frac{2}{t}\, e^6$.  Hence, the complex structure equations for $J_t$ are
$$
d \sigma^1=d \sigma^2=0,\quad
d \sigma^3= \sigma^{1\bar{1}}
-\frac{i}{t} \sigma^{1\bar{2}} -\frac{i}{t} \sigma^{2\bar{1}} -
 \sigma^{2\bar{2}}.
$$
Now, by~\cite[Lemma 11]{U} there exists a (1,0)-basis $\{\omega^j\}_{j=1}^3$ satisfying
$$
d \omega^1=d \omega^2=0,\quad\quad d \omega^3=
\omega^{1\bar{1}}+\omega^{1\bar{2}}+D\,
\omega^{2\bar{2}},$$
with $D=\frac{1-t^2}{4}$. Notice that $D\in [0,\frac{1}{4})$ because $t\in (0,1]$.
Therefore, any abelian complex structure on $\frh_5$ is given, up to isomorphism, as in Table 1.

\medskip

For completeness we include Table 2 with the classification of non-nilpotent complex structures on 6-dimensional NLAs
obtained in \cite{UV1}.  For any of such structures there exists a $(1,0)$-basis $\{\omega^1, \omega^2, \omega^3\}$ satisfying the following structure equations:
\begin{equation}\label{eq-no-nilpot}
d\omega^1 = 0,\quad d\omega^2 =\omega^{13}+\omega^{1\bar3} ,\quad d\omega^3 = i\epsilon\, \omega^{1\bar1}\pm (\omega^{1\bar2}-\omega^{2\bar1}),
\end{equation} where $\epsilon\in\{0,1\}$.

\vfill\eject
{\large\bf Table 1: Classification of nilpotent complex structures}\vspace{7pt}

\hskip-1cm
\begin{tabular}{|c|l|l|}
\hline
$\frg$ & \textsf{Abelian structures ($\rho=0$) } & \textsf{Non-abelian Nilpotent structures ($\rho=1$) }\\
\hline && \\[-7pt]
$\frh_{1}$ &  $d\omega^2=0,\ d\omega^3=0$ &  \hskip3cm{\bf ---} \\
\hline &&\\[-9pt]
$\frh_{2}$ & \hskip-.2cm$
\begin{array}{rl}
\zzz&\zzz d\omega^2 =0,\ d\omega^3=\omega^{1\bar{1}}+
D\, \omega^{2\bar{2}},\\[2pt]
\zzz&\zzz \Imag D=1
\end{array}
$
 & \hskip-.2cm$
\begin{array}{rl}
\zzz&\zzz d\omega^2 =0,\ d\omega^3 =\omega^{12} +
\omega^{1\bar1}+\omega^{1\bar2}+D\,\omega^{2\bar2},\\[2pt]
\zzz&\zzz \Imag D>0
\end{array}
$ \\
\hline &&\\[-9pt]
$\frh_{3}$ & $d\omega^2 = 0,\ d\omega^3
=\omega^{1\bar{1}}\pm \omega^{2\bar{2}}$ & \hskip3cm{\bf ---} \\
\hline &&\\[-9pt]
$\frh_{4}$ & \hskip-.2cm$
\begin{array}{rl}
\zzz&\zzz d\omega^2 =0,\\[2pt]
\zzz&\zzz d \omega^3=
\omega^{1\bar{1}}+\omega^{1\bar{2}}+\frac{1}{4}\omega^{2\bar{2}}
\end{array}
$
& \hskip-.2cm$
\begin{array}{rl}
\zzz&\zzz d\omega^2 =0,\ d\omega^3=\omega^{12} +
\omega^{1\bar1}+\omega^{1\bar2}+D\,\omega^{2\bar2},\\[2pt]
\zzz&\zzz D\in \mathbb{R}\!-\!\{0\}
\end{array}
$ \\[1pt]
\hline &&\\[-9pt]
 &  & $d\omega^2 =0,\ d\omega^3=\omega^{12}$ \\
\cline{3-3}
& & \\[-8pt]
&  & $d\omega^2 =0,\ d\omega^3=\omega^{12} +
\omega^{1\bar1}+\lambda\,\omega^{1\bar 2} +D\,\omega^{2\bar2},$ \\[2pt]
& $d\omega^2=0,$& $\mbox{with } (\lambda,D) \mbox{ satisfying one of:}$ \\[2pt]
$\frh_{5}$& $d \omega^3=\omega^{1\bar{1}}+\omega^{1\bar{2}}+D\,\omega^{2\bar{2}}$,& $\bullet\ \lambda=0\leq  \Imag D, \
4(\Imag D)^2<1+4 \,\Real D$;  \\[2pt]
& $D\in [0,\frac14)$ & $\bullet\  0<\lambda^2<\frac 12,\ 0\leq \Imag D<\frac{\lambda^2}{2},\ \Real D\!=0$;\\[3pt]
& & $\bullet\  \frac 12\leq \lambda^2< 1,\ 0\leq \Imag D<\frac{1-\lambda^2}{2},\ \Real D\!=0$;\\[3pt]
& & $\bullet\  \lambda^2>1,\ 0\leq \Imag D<\frac{\lambda^2-1}{2},\ \Real D\!=0$. \\ [2pt]
\hline && \\[-7pt]
$\frh_{6}$ & \hskip2cm{\bf ---} & $d\omega^2 =0,\ d\omega^3 =\omega^{12} + \omega^{1\bar{1}} + \omega^{1\bar{2}}$ \\[3pt]
\hline &&\\[-9pt]
$\frh_{7}$ & \hskip2cm{\bf ---} & $d\omega^2=\omega^{1\bar{1}},\ d\omega^3 = \omega^{12} + \omega^{1\bar{2}}$ \\[3pt]
\hline &&\\[-9pt]
$\frh_{8}$ &  $d\omega^2 = 0,\ d\omega^3=\omega^{1\bar{1}}$ & \hskip3cm{\bf ---} \\[3pt]
\hline &&\\[-9pt]
$\frh_{9}$ & $d\omega^2=\omega^{1\bar{1}},\ d\omega^3 = \omega^{1\bar{2}} + \omega^{2\bar{1}}$ & \hskip3cm{\bf ---} \\[3pt]
\hline &&\\[-9pt]
$\frh_{10}$ & \hskip2cm{\bf ---} & $d\omega^2 =\omega^{1\bar{1}},\ d\omega^3 =\omega^{12} + \omega^{2\bar{1}}$ \\[3pt]
\hline &&\\[-9pt]
$\frh_{11}$ & \hskip2cm{\bf ---} & \hskip-.2cm$
\begin{array}{rl}
\zzz&\zzz d\omega^2 =\omega^{1\bar{1}},\ d\omega^3 =\omega^{12} + B\, \omega^{1\bar{2}} + |B\!-\!1|\, \omega^{2\bar{1}},\\[2pt]
\zzz&\zzz B\in \mathbb{R}\!-\!\{0,1\}
\end{array}
$ \\ [2pt]
\hline &&\\[-9pt]
$\frh_{12}$ & \hskip2cm{\bf ---} & \hskip-.2cm$
\begin{array}{rl}
\zzz&\zzz d\omega^2 =\omega^{1\bar{1}},\ d\omega^3 =\omega^{12} + B\, \omega^{1\bar{2}} + |B\!-\!1|\, \omega^{2\bar{1}},\\[2pt]
\zzz&\zzz \Imag B\not=0
\end{array}
$ \\
\hline &&\\[-9pt]
$\frh_{13}$ & \hskip2cm{\bf ---} & \hskip-.2cm$
\begin{array}{rl}
\zzz&\zzz d\omega^2 = \omega^{1\bar{1}},\ d\omega^3 = \omega^{12} + B\, \omega^{1\bar{2}} + c\, \omega^{2\bar{1}},\\[3pt]
\zzz&\zzz c\not=|B-1|, \quad (c,|B|)\not=(0,1),\\[3pt]
\zzz&\zzz c^4 - 2(|B|^2+1)c^2+(|B|^2-1)^2 <0
\end{array}
$ \\
\hline &&\\[-9pt]
$\frh_{14}$ & \hskip2cm{\bf ---} & \hskip-.2cm$
\begin{array}{rl}
\zzz&\zzz d\omega^2 = \omega^{1\bar{1}},\ d\omega^3 = \omega^{12} + B\, \omega^{1\bar{2}} + c\, \omega^{2\bar{1}},\\[3pt]
\zzz&\zzz c\not=|B-1|, \quad (c,|B|)\not=(0,1),\\[3pt]
\zzz&\zzz c^4 - 2(|B|^2+1)c^2+(|B|^2-1)^2 =0
\end{array}
$  \\
\hline &&\\[-9pt]
 & $d\omega^2 = \omega^{1\bar{1}},\  d\omega^3 = \omega^{2\bar{1}}$ &
$d\omega^2 = \omega^{1\bar{1}},\ d\omega^3 = \omega^{12} + B\, \omega^{1\bar{2}} + c\, \omega^{2\bar{1}},$ \\
\cline{2-2}
& & \\[-8pt]
$\frh_{15}$& $d\omega^2 = \omega^{1\bar{1}},\ d\omega^3 =\omega^{1\bar{2}} + c\, \omega^{2\bar{1}},$ &
$c\not=|B-1|, \quad (c,|B|)\not=(0,1)$,\\[3pt]
& $c \neq 1$ & $c^4 - 2(|B|^2+1)c^2+(|B|^2-1)^2 >0$ \\
\hline &&\\[-9pt]
$\frh_{16}$ & \hskip2cm{\bf ---} & \hskip-.2cm$
\begin{array}{rl}
\zzz&\zzz d\omega^2 = \omega^{1\bar{1}},\ d\omega^3 = \omega^{12} + B\, \omega^{1\bar{2}},\\[2pt]
\zzz&\zzz |B|=1,\ B\not= 1
\end{array}
$ \\
\hline
\end{tabular}

\vskip.2cm

\noindent $d\omega^1=0$;\ \ $\lambda,c\geq 0$;\ \ $B,D\in \mathbb{C}$.

\newpage

\vfill\eject

\begin{center} {\large\bf Table 2: Classification of non-nilpotent complex structures}\vspace{5pt}

{
\begin{tabular}{|c|l|}
\hline  $\frg$ & \hskip3cm{Complex structures} \\
\hline &\\[-9pt] $\frh^-_{19}$ &
$d\omega^1=0,\quad d\omega^2 = \omega^{13} + \omega^{1\bar{3}},\quad
d\omega^3 = \pm i(\omega^{1\bar{2}} - \omega^{2\bar{1}})$
\\[1pt]
\hline &\\[-9pt] $\frh^+_{26}$ &
$d\omega^1=0,\quad d\omega^2 = \omega^{13} + \omega^{1\bar{3}},\quad
d\omega^3 = i\,
\omega^{1\bar{1}} \pm i ( \omega^{1\bar{2}} - \omega^{2\bar{1}})$ \\[1pt]
\hline
\end{tabular}}
\end{center}

\section{Fr\"olicher spectral sequence} \label{Frolicher}

\noindent
In this section we study the behaviour of the Fr\"olicher sequence for 6-nilmanifolds endowed with an invariant complex structure.
Recall that given a complex manifold~$M$, the Fr\"olicher spectral sequence $E_r^{p,q}(M)$ is the
spectral sequence associated to the double complex $(\Omega^{p,q}(M),\partial,\db)$, where
$\partial$ and $\db$ come from the well-known decomposition $d=\partial+\db$ of the exterior
differential $d$ on $M$~\cite{Fro}.

The first term $E_1(M)$ in the sequence is precisely the Dolbeault cohomology of~$M$, that is,
$E_1^{p,q}(M)\cong H_{\db}^{p,q}(M)$, and after a finite number of steps this sequence converges to
the de Rham cohomology of $M$.
More concretely, for each $r\geq 1$ there is a sequence of homomorphisms $d_r$
$$
\cdots\longrightarrow E_r^{p-r,q+r-1}(M) \stackrel{d_r}{\longrightarrow} E_r^{p,q}(M)
\stackrel{d_r}{\longrightarrow} E_r^{p+r,q-r+1}(M) \longrightarrow\cdots
$$
such that $d_r\circ d_r=0$ and $E_{r+1}^{p,q}(M)={\rm Ker}\, d_r / {\rm Im}\, d_r$.
The homomorphisms $d_r$ are induced from $\partial$.
When $r=1$ the homomorphism $d_1\colon H_{\bar\partial}^{p,q}(M) \longrightarrow H_{\bar\partial}^{p+1,q}(M)$ is
given by $d_1([\alpha_{p,q}])=[\partial\alpha_{p,q}]$, for $[\alpha_{p,q}]\in H_{\bar\partial}^{p,q}(M)$.
We will also use that
$$E_2^{p,q}(M)=\frac{\{\alpha_{p,q}\in \Omega^{p,q}(M)\,|\,\bar\partial \alpha_{p,q}=0,\, \partial\alpha_{p,q}=-\db\alpha_{p+1,q-1}\}}{\{ \db\beta_{p,q-1}+\partial\gamma_{p-1,q} \,|\, \db\gamma_{p-1,q}=0\}},
$$
and the homomorphism $d_2\colon E_{2}^{p,q}(M) \longrightarrow E_{2}^{p+2,q-1}(M)$ is given by
$d_2([\alpha_{p,q}]) = [\partial\alpha_{p+1,q-1}]$,
for $[\alpha_{p,q}]\in E_{2}^{p,q}(M)$ (see for example \cite{CFGU0} for general descriptions of $d_r$ and $E_r^{p,q}$).

Let $M=\nilm$ be a nilmanifold endowed with an invariant complex structure $J$,
and let $\frg$ be the Lie algebra of $G$. In dimension 6,
Rollenske proved in \cite[Section 4.2]{R2} that if $\frg\not\cong\frh_7$ then the natural inclusion
$$
\left(\bigwedge\phantom{\!\!}^{p,\bullet}(\frg^*),\db\right) \hookrightarrow (\Omega^{p,\bullet}(M),\db)
$$
induces an isomorphism
\begin{equation}\label{iso-dolb}
\iota\colon H_{\db}^{p,q}(\frg) \longrightarrow H_{\db}^{p,q}(M)
\end{equation}
between the Lie-algebra Dolbeault cohomology of $(\frg,J)$ and the Dolbeault cohomology of $M$.
Thus, an inductive argument~\cite[Theorem 4.2]{CFGU1bis} implies that the natural map
$\iota\colon E_r^{p,q}(\frg) \longrightarrow E_r^{p,q}(M)$ is also an isomorphism, and therefore
$E_r^{p,q}(M)\cong E_r^{p,q}(\frg)$ for any $p,q$ and any $r\geq 1$, whenever $\frg\not\cong\frh_7$
(see Remark~\ref{explic-h7} below).
Using this, in the next result we show the general behaviour of the Fr\"olicher sequence in dimension~6.

\begin{theorem}\label{Fro-general}
Let $M=\nilm$ be a $6$-dimensional nilmanifold endowed with an invariant complex structure $J$ such that the underlying Lie algebra
$\frg\not\cong \frh_7$. Then the Fr\"olicher spectral sequence $E_r^{p,q}(M,J)$ behaves as follows:
\begin{enumerate}
\item[(a)] If $\frg\cong \frh_1$, $\frh_3$, $\frh_6$,
$\frh_8$, $\frh_9$,
$\frh_{10}$, $\frh_{11}$, $\frh_{12}$ or $\frh_{19}^-$, then $E_1\cong E_{\infty}$ for any $J$.
\item[(b)] If $\frg\cong\frh_2$ or $\frh_4$, then $E_1\cong E_{\infty}$ if and only if $J$ is non-abelian; moreover,
any abelian complex structure on $\frh_2$ or $\frh_4$ satisfies $E_1\not\cong E_2\cong E_{\infty}$.
\item[(c)]
If $\frg\cong\frh_5$ and $J$ is a complex structure on $\frh_5$ given in Table 1, then:
\begin{enumerate}
\item[(c.1)]
$E_1\not\cong E_2\cong E_{\infty}$ when $J$ is complex-parallelizable;
\item[(c.2)]
$E_1\cong E_{\infty}$ if and only if $J$ is not complex-parallelizable and $\rho D\not=0$;
moreover, $E_1\not\cong E_2\cong E_{\infty}$ when $\rho D=0$.
\end{enumerate}
\item[(d)]
If $\frg\cong\frh_{16}$ or $\frh_{26}^+$, then $E_1\not\cong E_2\cong E_{\infty}$ for any $J$.
\item[(e)]
If $\frg\cong\frh_{13}$ or $\frh_{14}$, then $E_1\cong E_2\not\cong E_3 \cong E_{\infty}$ for any $J$.
\item[(f)]
If $\frg\cong\frh_{15}$ and $J$ is a complex structure on $\frh_{15}$ given in Table 1, then:
\begin{enumerate}
\item[(f.1)]
$E_1\not\cong E_2\cong E_{\infty}$, when $c=0$ and $|B-\rho|\not= 0$;
\item[(f.2)]
$E_1\cong E_2\not\cong E_3 \cong E_{\infty}$, when $\rho=1$ and $|B-1|\not= c\not= 0$;
\item[(f.3)]
$E_1\not\cong E_2\not\cong E_3 \cong E_{\infty}$, when $\rho=0$ and $|B|\not= c\not= 0$.
\end{enumerate}
\end{enumerate}
\end{theorem}

\begin{proof}
The proof is straightforward and we only give it explicitly for the case (f), that is,
$\frg\cong\frh_{15}$, because it is the most intriguing case where different non-trivial behaviours can be produced.

We will use the notation $E_r^{|k|}=\oplus_{p+q=k}E_r^{p,q}$. Since $E_{\infty}^{|k|}\cong H_{\rm dR}^{k}$,
it is clear that
$\dim E_r^{|k|} \geq b_k=\dim H^k_{dR}$ for all $k$, and the equalities hold if and only if $E_r\cong E_{\infty}$.  Recall that $b_1(\frh_{15}) = 3$, $b_2(\frh_{15}) = 5$ and $b_3(\frh_{15}) = 6$ (see \cite{S}).

For the calculation of the first term $E_1$, that is, the Dolbeault cohomology,
by the Serre duality it suffices to study the spaces $E_1^{p,q}=H_{\db}^{p,q}$ for $(p,q)=(1,0), (0,1), (2,0), (1,1), (0,2), (3,0)$ and $(2,1)$.

Let $J$ be a complex structure on $\frh_{15}$ given in Table 1.
If $J$ is abelian then $(B,c)=(0,1)$ or $(1,c)$ with $c\not=1$, therefore
\begin{equation}\label{grupos-Dolb-family2-ro0}
\begin{array}{l}
H_{\db}^{1,0}=\langle [\omega^{1}] \rangle,\quad\quad
H_{\db}^{2,0}=\langle [\omega^{12}],  \delta_0^c[\omega^{13}]\rangle, \quad \quad
H_{\db}^{3,0}=\langle [\omega^{123}] \rangle,\\[7pt]
H_{\db}^{0,1}=\langle [\omega^{\bar{1}}], [\omega^{\bar{2}}], [\omega^{\bar{3}}] \rangle,\quad\quad
H_{\db}^{0,2}=\langle [\omega^{\bar{1}\bar{2}}], [\omega^{\bar{1}\bar{3}}], [\omega^{\bar{2}\bar{3}}] \rangle,\\[7pt]
H_{\db}^{1,1}=\langle (1-\delta_0^c)[\omega^{1\bar{2}}],  [\omega^{1\bar3}], \delta_0^c[\omega^{2\bar{1}}], [B\omega^{2\bar{2}}+\omega^{3\bar{1}}], \delta_0^c[\omega^{3\bar{2}}] \rangle, \quad\ \\[7pt]
H_{\db}^{2,1}=\langle \delta_0^c[\omega^{12\bar{1}}], [\omega^{12\bar{2}}],  [\omega^{12\bar{3}}], [B\omega^{13\bar{2}}-c\omega^{23\bar{1}}], \delta_0^c[\omega^{13\bar{3}}] \rangle,
\end{array}
\end{equation}
where $\delta^c_0$ is equal to $0$ if $c\not= 0$, and equals $1$ if $c=0$.
Since $\dim E_1^{|1|}=4>3=b_1(\frh_{15})$ we get that $E_1\not\cong E_{\infty}$ for
any abelian $J$.

When $J$ is not abelian, i.e. $\rho=1$, the Dolbeault cohomology groups are
\begin{equation}\label{grupos-Dolb-family2-ro1}
\begin{array}{l}
H_{\db}^{1,0}=\langle [\omega^{1}], \delta^B_0\delta^c_0[\omega^{3}] \rangle,\quad\quad
H_{\db}^{2,0}=\langle [\omega^{12}], \delta^c_0[\omega^{13}] \rangle,\quad\quad
H_{\db}^{3,0}=\langle [\omega^{123}] \rangle,\\[7pt]
H_{\db}^{0,1}=\langle [\omega^{\bar{1}}], [\omega^{\bar{2}}] \rangle,\quad\quad
H_{\db}^{0,2}=\langle [\omega^{\bar{1}\bar{3}}], [\omega^{\bar{2}\bar{3}}] \rangle,\\[7pt]
H_{\db}^{1,1}=\langle (Bc+\delta^B_0)[\omega^{1\bar{2}}], [\omega^{1\bar{3}}+\omega^{2\bar{2}}],
[B\omega^{1\bar{3}}-\omega^{3\bar{1}}], \delta^c_0[\omega^{2\bar{1}}], \delta^c_0[\omega^{3\bar{2}}] \rangle,\\[7pt]
H_{\db}^{2,1}=\langle \delta^c_0[\omega^{12\bar{1}}], [\omega^{12\bar{2}}], [c\,\omega^{12\bar{3}}+\omega^{13\bar{2}}],
[B\omega^{12\bar{3}}+\omega^{23\bar{1}}], \delta^c_0[\omega^{13\bar{3}}+\omega^{23\bar{2}}]  \rangle,
\end{array}
\end{equation}
where $\delta^B_0$ has a similar definition as for $\delta^c_0$ above.
Notice that the coefficient $Bc+\delta^B_0$ is non-zero except for $B\not=0$ and $c=0$.
Thus, $\dim E_1^{|2|}\geq 6>5=b_2(\frh_{15})$ and so $E_1\not\cong E_{\infty}$ also for any non-abelian $J$.

In order to prove (f.1) we need to study independently the abelian and the non-abelian complex structures with $c=0$ and $B\neq \rho$ on $\frh_{15}$.  We start with the abelian ones. In this case, by Table 1 we can suppose $B=1$ and from \eqref{grupos-Dolb-family2-ro0} it follows that the dimensions of $E_1^{|2|}$ and $E_1^{|3|}$ are
$$
\dim E_1^{|2|}=9>5=b_2(\frh_{15}),\quad\quad \dim E_1^{|3|}=12>6=b_3(\frh_{15}).
$$
For the following $d_1$-homomorphisms
$
E_1^{0,1} \stackrel{d_1}{\longrightarrow} E_1^{1,1}
\stackrel{d_1}{\longrightarrow} E_1^{2,1}
\stackrel{d_1}{\longrightarrow} E_1^{3,1},$
the classes $[\omega^{\bar{3}}]$, $[\omega^{1\bar{3}}]$, $[\omega^{3\bar{2}}]$, $[\omega^{13\bar{3}}]$ have linearly independent images.  On the other hand, for
$ E_1^{0,2} \stackrel{d_1}{\longrightarrow} E_1^{1,2}
\stackrel{d_1}{\longrightarrow} E_1^{2,2}
\stackrel{d_1}{\longrightarrow} E_1^{3,2},
$
the images of the classes $[\omega^{\bar{2}\bar{3}}]$, $[\omega^{3\bar{2}\bar{3}}]$, $[\omega^{2\bar{2}\bar{3}} + \omega^{3\bar{1}\bar{3}}]$ and
$[\omega^{13\bar{2}\bar{3}}]$ are also independent.  Counting dimensions for $E^{|k|}_2$ we get that

$\dim E_2^{|1|}\leq \dim E_1^{|1|}-1=3=b_1(\frh_{15})$, \quad $\dim E_2^{|2|}\leq \dim E_1^{|2|}-4=5=b_2(\frh_{15})$,

$\dim E_2^{|3|}\leq \dim E_1^{|3|}-6=6=b_3(\frh_{15})$, \quad $\dim E_2^{|4|}\leq \dim E_1^{|4|}-4=5=b_4(\frh_{15})$,

$\dim E_2^{|5|}\leq \dim E_1^{|5|}-1=3=b_5(\frh_{15})$.

\noindent This implies that $E_2\cong E_{\infty}$ because necessarily $\dim E_2^{|k|}= b_k(\frh_{15})$ for all $k$.

If $\rho=1$ and $c=0$, then $B\neq 1$ and by \eqref{grupos-Dolb-family2-ro1} we have $\dim E_1^{|1|}=b_1(\frh_{15})+\delta_0^B$.
So $E_1^{|1|}\cong E_{\infty}^{|1|}$ when $B\neq 0$. For $B=0$, since $d_1([\omega^3])\neq 0$ and $d_1([\omega^{3\bar{1}\bar{2}\bar{3}}])\neq 0$,
we conclude that $\dim E_2^{|1|}\leq \dim E_1^{|1|}-1=3=b_1(\frh_{15})$ and $\dim E_2^{|5|}\leq \dim E_1^{|5|}-1=3=b_1(\frh_{15})$, and therefore, $E_2^{|k|}\cong E_{\infty}^{|k|}$ if $k=1$ or $k=5$.

Now, for $B\neq 1$ we have that $\dim E_1^{|2|}=8+\delta_0^B>5=b_2(\frh_{15})$,  $\dim E_1^{|3|}=12>6=b_3(\frh_{15})$,   $\dim E_1^{|4|}=8+\delta_0^B>5=b_4(\frh_{15}).$  In order to conclude that $E_2\cong E_{\infty}$ it suffices to observe that for
the following homomorphisms
$$
E_1^{1,1} \stackrel{d_1}{\longrightarrow} E_1^{2,1}
\stackrel{d_1}{\longrightarrow} E_1^{3,1}, \quad\quad
E_1^{0,2} \stackrel{d_1}{\longrightarrow} E_1^{1,2}
\stackrel{d_1}{\longrightarrow} E_1^{2,2}
$$
the classes $[\omega^{1\bar{3}} + \omega^{2\bar2}]$, $[\omega^{3\bar{2}}]$, $[\omega^{13\bar{3}} + \omega^{23\bar{2}}]$,
$[\omega^{\bar{2}\bar{3}}]$, $[\omega^{3\bar{2}\bar{3}}]$ and
$[B\omega^{2\bar{2}\bar{3}}+\omega^{3\bar{1}\bar{3}}]$ have linearly independent images.

For case (f.2), we consider $\rho=1$ and $|B-1|\neq c\neq 0$.
As $\dim E_1^{|1|}=3=b_1(\frh_{15})$, we get that $E_1^{|1|}\cong E_{\infty}^{|1|}$.
Now, for the map $E_2^{0,2} \stackrel{d_2}{\longrightarrow} E_2^{2,1}$ we have
$d_2([\omega^{\bar{2}\bar{3}}]) = \left[\partial\left(\omega^{2\bar3} + \frac{1-\bar B}{c}\,\omega^{3\bar2}\right)\right] =
\frac{|B-1|^2 - c^2}{c}\,[\omega^{12\bar2}]\not=0$,
because $\omega^{12\bar2}\not= \db\beta_{2,0}+\partial\gamma_{1,1}$ for any $\beta_{2,0}$ and any $\db$-closed $\gamma_{1,1}$.
Hence,
$$b_2(\frh_{15})\leq \dim E_3^{|2|}\leq \dim E_2^{|2|}-1\leq \dim E_1^{|2|}-1=6-1=5=b_2(\frh_{15})$$ and we conclude that $E_{\infty}^{|2|}\cong E_3^{|2|}\not\cong E_2^{|2|} \cong E_1^{|2|}$.

Similarly,  $d_2\colon E_2^{1,2} \longrightarrow E_2^{3,1}$ is non-zero
(for instance, $d_2([\omega^{3\bar{1}\bar{3}} + B\omega^{2\bar{2}\bar{3}}])\not=0$).
Thus,
$$b_3(\frh_{15})\leq \dim E_3^{|3|}\leq \dim E_2^{|3|}-2\leq \dim E_1^{|3|}-2=8-2=6=b_3(\frh_{15})$$
and we conclude that $E_{\infty}^{|3|}\cong E_3^{|3|}\not\cong E_2^{|3|} \cong E_1^{|3|}$.  By the same argument
$$b_4(\frh_{15})\leq \dim E_3^{|4|}\leq \dim E_2^{|4|}-1\leq \dim E_1^{|4|}-1=6-1=5=b_4(\frh_{15})$$
and therefore $E_{\infty}^{|4|}\cong E_3^{|4|}\not\cong E_2^{|4|} \cong E_1^{|4|}$.
Summing up all the information, we conclude that $E_1\cong E_2\not\cong E_3 \cong E_{\infty}$ in case (f.2).

For the last case (f.3), we first observe that
$d_1([\omega^{\bar{3}}])=-c[\omega^{1\bar{2}}]-\bar{B}[\omega^{2\bar{1}}]$. Since this class is zero if and only if $c\,\omega^{1\bar{2}}+\bar{B}\omega^{2\bar{1}}\in \bar\partial(\bigwedge^{1,0})=\langle \omega^{1\bar1}, B\omega^{1\bar2}+c\,\omega^{2\bar1}\rangle$, i.e. $|B|=c$,
the map $d_1\colon E_1^{0,1} \longrightarrow E_1^{1,1}$ is non-zero. Therefore,
$\dim E_2^{|1|}\leq \dim E_1^{|1|}-1=3$, i.e. $E_1^{|1|}\not\cong E_2^{|1|}\cong E_{\infty}^{|1|}$.  Moreover, since $d_2([\omega^{\bar2\bar3}])\neq 0$, we deduce that
$$b_2(\frh_{15})\leq \dim E_3^{|2|}\leq \dim E_2^{|2|}-1\leq \dim E_1^{|2|}-2=7-2=5=b_2(\frh_{15}),$$
so $E_{\infty}^{|2|}\cong E_3^{|2|}\not\cong E_2^{|2|}\not \cong E_1^{|2|}$.
Analogously, $d_2([\omega^{3\bar{1}\bar{3}} + B\omega^{2\bar{2}\bar{3}}])\neq 0$, which implies
$$b_3(\frh_{15})\leq \dim E_3^{|3|}\leq \dim E_2^{|3|}-2\leq \dim E_1^{|3|}-2=8-2=6=b_3(\frh_{15}),$$ and we conclude that $E_{\infty}^{|3|}\cong E_3^{|3|}\not\cong E_2^{|3|} \cong E_1^{|3|}$.  We also have
$$b_4(\frh_{15})\leq \dim E_3^{|4|}\leq \dim E_2^{|4|}-1\leq \dim E_1^{|4|}-2=7-2=5=b_4(\frh_{15}),$$
and therefore $E_{\infty}^{|4|}\cong E_3^{|4|}\not\cong E_2^{|4|} \not\cong E_1^{|4|}$.
Consequently, $E_1\not\cong E_2\not\cong E_3\cong E_{\infty}$ in case (f.3).
\end{proof}

\begin{remark}\label{explic-h7}
{\rm
Let $(M = \nilm, J)$ be a 6-dimensional nilmanifold endowed
with an invariant complex structure $J$ and suppose that $\mathfrak{g} = \mathfrak{h}_7$.
In \cite[Theorem 4.4]{R2} it is proved that
there is a dense subset of the space of all invariant
complex structures for which the complex nilmanifold admits the structure of principal
holomorphic bundle of elliptic curves over a Kodaira surface, but
this is not true for all complex structures. In fact, the invariant complex structure $J$ may not be compatible with the
lattice $\Gamma$ (see \cite[Example 1.14]{R2}), so one cannot ensure the existence of the isomorphism
\eqref{iso-dolb}, and hence of a canonical isomorphism between $E_r^{p,q}(\mathfrak{g},J)$
and $E_r^{p,q}(M,J)$, for any invariant $J$ on the nilmanifold $M$.
However, notice that up to equivalence there is only one complex structure on $\mathfrak{h}_7$ and it can be proved that it satisfies that
the sequence degenerates at the first step, i.e.
$E_1(\mathfrak{h}_7)\cong E_{\infty}(\mathfrak{h}_7)$.
}
\end{remark}

In \cite{AT} the authors posed the following problem: to construct a compact complex manifold such that
$E_1\cong E_\infty$ and $h_{\db}^{p,q}=h_{\db}^{q,p}$ for every $p,q\in \mathbb{N}$ but for which
the $\partial\db$-lemma does not hold.
Since nilmanifolds do not satisfy the $\partial\db$-lemma, unless they are complex tori, the following
result provides a solution.

\begin{proposition}\label{h6-question}
Let $J$ be any invariant complex structure on a nilmanifold $M$ with underlying Lie algebra
isomorphic to $\frh_6$. Then $E_1(M)\cong E_\infty(M)$ and the Hodge numbers satisfy
$$
\begin{array}{c}
 h_{\db}^{0,0}(M)=1,\\[4pt]
 h_{\db}^{1,0}(M)=2,\quad\ h_{\db}^{0,1}(M)=2,\\[4pt]
 h_{\db}^{2,0}(M)=2,\quad\ h_{\db}^{1,1}(M)=5,\quad\ h_{\db}^{0,2}(M)=2,\\[4pt]
 h_{\db}^{3,0}(M)=1,\quad\ h_{\db}^{2,1}(M)=5,\quad\ h_{\db}^{1,2}(M)=5, \quad\ h_{\db}^{0,3}(M)=1,\\[4pt]
 h_{\db}^{3,1}(M)=2,\quad\ h_{\db}^{2,2}(M)=5,\quad\ h_{\db}^{1,3}(M)=2,\\[4pt]
 h_{\db}^{3,2}(M)=2,\quad\ h_{\db}^{2,3}(M)=2,\\[4pt]
 h_{\db}^{3,3}(M)=1.
\end{array}
$$
\end{proposition}

\begin{proof}
Any complex structure $J$ on $\frh_6$ is equivalent to the complex structure given in Table 1, that is,
$\rho=\lambda=1$ and $D=0$.
Its Dolbeault cohomology groups $H_{\db}^{p,q}$ for $(p,q)=(1,0), (0,1), (2,0), (1,1), (0,2), (3,0)$ and $(2,1)$ are
$$
\begin{array}{l}
H_{\db}^{1,0}=\langle [\omega^{1}], [\omega^{2}] \rangle,\quad\quad
H_{\db}^{2,0}=\langle [\omega^{12}], [\omega^{13}] \rangle,\quad\quad
H_{\db}^{3,0}=\langle [\omega^{123}] \rangle,\\[7pt]
H_{\db}^{0,1}=\langle [\omega^{\bar{1}}], [\omega^{\bar{2}}] \rangle,\quad\quad
H_{\db}^{0,2}=\langle [\omega^{\bar{1}\bar{3}}], [\omega^{\bar{2}\bar{3}}] \rangle,\\[7pt]
H_{\db}^{1,1}=\langle [\omega^{1\bar{2}}], [\omega^{2\bar{1}}], [\omega^{2\bar{2}}],
[\omega^{1\bar{3}}+\omega^{3\bar{2}}], [\omega^{3\bar{1}}+\omega^{3\bar{2}}] \rangle,\\[7pt]
H_{\db}^{2,1}=\langle [\omega^{12\bar{2}}], [\omega^{13\bar{1}}],
[\omega^{12\bar{3}}+\omega^{23\bar{1}}], [\omega^{12\bar{3}}-\omega^{23\bar{2}}], [\omega^{13\bar{2}}] \rangle.
\end{array}
$$
By Serre duality we get the above Hodge diamond which is symmetric.
Moreover,
$$\dim E_1^{|1|}=4=b_1(\frh_6),\quad \dim E_1^{|2|}=9=b_2(\frh_6),\quad \dim E_1^{|3|}=12=b_3(\frh_6),$$
so the Fr\"olicher spectral sequence degenerates at the first step.
\end{proof}

The following result shows that there are many complex nilmanifolds for which the Fr\"olicher spectral sequence is stable under small
deformations of the complex structure.

\begin{proposition}\label{stable-Frol-sequence}
Let $M=\nilm$ be a $6$-dimensional nilmanifold endowed with an invariant complex structure $J$,
and let $\frg$ be the Lie algebra of $G$.
If $\frg\cong \frh_1,\frh_3,\frh_6, \frh_8,\frh_9, \frh_{10}, \frh_{11},\frh_{12},\frh_{13}, \frh_{14},\frh_{16},\frh_{19}^-$ or $\frh_{26}^+$,
then $\dim E_r^{p,q}(M,J)$ is stable under small deformations of $J$ for any $p,q$ and any $r\geq 1$.
\end{proposition}

\begin{proof}
By~\cite[Theorem 2.6]{R1}, all small deformations of the complex structure $J$ are again invariant complex structures.
Proceeding as in the proof of Theorem~\ref{Fro-general}, it can be proved that if
$\frg\not\cong \frh_2, \frh_4, \frh_5$ or $\frh_{15}$, then
$\dim E_r^{p,q}(M)$ does not depend on the invariant complex structure on $M$ for any $p,q$ and any $r\geq 1$, so it is stable under small
deformations of $J$.
\end{proof}

\begin{remark}\label{abel-no-abel}
{\rm
The 6-dimensional nilmanifolds with underlying Lie algebra isomorphic to $\frh_2$, $\frh_4$, $\frh_5$ or $\frh_{15}$
are the only ones that
have both abelian and non-abelian complex structures (see Table~1).
More generally, let $M$ be a $2n$-dimensional nilmanifold, $\frg$ the underlying Lie algebra, $J$ an abelian complex structure and
$J'$ a non-abelian invariant complex structure on $M$.
It is well known that $J$ is abelian if and only if there is a basis $\{\omega^1,\ldots,\omega^n\}$
of invariant forms of type (1,0) satisfying $\partial\omega^j=0$ for $1\leq j\leq n$; therefore, by \cite{CF} one has that
$h_{\bar\partial}^{0,1}(M,J)=n$ because \eqref{iso-dolb} holds for abelian structures.
However, for $J'$ we have $\dim H_{\bar\partial}^{0,1}(\frg,J')<n$ and, if an isomorphism like
\eqref{iso-dolb} holds,
then the Hodge number satisfies $h_{\bar\partial}^{0,1}(M,J')<n$. Thus, the existence of $J$ and $J'$ on a nilmanifold $M$
might lead to the non-stability of $\dim E_1^{0,1}$ under small deformations.
A natural question arises in this context: is the Fr\"olicher spectral sequence stable under small deformations
if and only if the nilmanifold does not admit both abelian and non-abelian complex structures?
Proposition~\ref{stable-Frol-sequence} above gives an affirmative answer for $n=3$.
}
\end{remark}

Next we provide some examples of explicit families of complex structures on nilmanifolds
corresponding to $\frh_5$ and $\frh_{15}$ along which the Fr\"olicher sequence varies.
In Corollaries~\ref{nonclosedFro} and~\ref{nonopenE2} below, further properties of
the Fr\"olicher spectral sequence on nilmanifolds are shown.

\begin{example}\label{example-h5}
{\rm
Let $J$ be a non complex-parallelizable and non-abelian complex structure on $\frh_5$ given in Table 1
with non-degenerate Fr\"olicher sequence, i.e. $E_1\ncong E_{\infty}$ for $J$.  We will construct a family of complex structures $J_t$ by deforming the previous one, i.e. $J_0=J$,  such that the Fr\"olicher spectral sequence degenerates at the first step for any $t\neq 0$.
According to Theorem~\ref{Fro-general}, $J$ has complex structure equations of the form
$$d\omega^1=d\omega^2=0,\quad d\omega^3=\omega^{12}+\omega^{1\bar1} + \lambda\,\omega^{1\bar2},$$
for some non-negative $\lambda\neq 1$, where $\{\omega^1, \omega^2, \omega^3\}$ is a $(1,0)$-basis for $J$.
With respect to the real basis $\{e^1,\ldots,e^6\}$ given by
$$
e^1 + i\,e^2=\omega^1,\quad
\frac{1}{1+\lambda}(e^3-e^1) +
\frac{i}{1-\lambda}(e^2+e^4)=\omega^2,\quad
e^5 + i e^6=\omega^3,
$$
the complex structure $J$ expresses as
$$
\begin{array}{lll}
J e^1=-e^2,\quad & J e^3=-\frac{2}{1-\lambda}\,e^2 - \frac{1+\lambda}{1-\lambda}\,e^4,\quad & J e^5=-e^6, \\[6pt]
J e^2=e^1,\quad &
J e^4=-\frac{2}{1+\lambda}\,e^1 + \frac{1-\lambda}{1+\lambda}\,e^3,\quad & J e^6=e^5.
\end{array}
$$
For any $t\in[0,\frac12)$, consider the complex structure $J_t$ given by
$$
\begin{array}{l}
J_t e^1=\frac{4d(1-\lambda)}{\alpha^2}\,e^1 - \frac{1-\lambda^2}{\alpha}\,e^2 - \frac{2d(1-\lambda)^2}{\alpha^2}\,e^3 + \frac{8d^2(1-\lambda)}{\alpha^3}\,e^4,\\[10pt]
J_t e^2=\frac{1-\lambda^2}{\alpha}\,e^1 +  \frac{2d(1-\lambda^2)}{\alpha^2}\,e^4,\\[10pt]
J_t e^3=-\frac{2d}{(1-\lambda)^2}\,e^1 - \frac{2\alpha}{(1-\lambda^2)(1-\lambda)}\,e^2 - \frac{(1+\lambda)^2}{\alpha}\,e^4,\\[10pt]
J_t e^4=-\frac{2(1-\lambda)}{\alpha}\,e^1 + \frac{2d}{1-\lambda^2}\,e^2 + \frac{(1-\lambda)^2}{\alpha}\,e^3 - \frac{4d(1-\lambda)}{\alpha^2}\,e^4,\\[10pt]
J_t e^5=\frac{2d}{1-\lambda^2}\,e^5 -\frac{4d^2+(1-\lambda^2)^2}{\alpha(1-\lambda^2)}\,e^6,\\[10pt]
J_t e^6=\frac{\alpha}{1-\lambda^2}\,e^5 - \frac{2d}{1-\lambda^2}\,e^6,
\end{array}
$$
where $\alpha=\sqrt{(1-\lambda^2)^2-4d^2}$, and
$$
d(t,\lambda)=\begin{cases}
\begin{array}{ll}
t,& \text{if }\lambda=0,\\
t\lambda^2/4,& \text{if }\lambda^2\in(0,1/2),\\
t(1-\lambda^2)/4,& \text{if }\lambda^2\in[1/2,1),\\
-t (1-\lambda^2)/4,& \text{if }\lambda^2>1.
\end{array}
\end{cases}
$$
Notice that $J_0=J$. Now, the  forms 
$$
\begin{array}{l}
\omega_t^1=\frac{1-\lambda^2}{\alpha}\,e^1 + \frac{2d(1-\lambda^2)}{\alpha^2}\,e^4 + i\,e^2,\\[7pt]
\omega_t^2=\frac{1-\lambda}{\alpha}(e^3-e^1) -
\frac{2d(1-\lambda)}{\alpha^2} e^4 +
\frac{i}{1-\lambda} \left( \frac{2d}{\alpha} e^1
+e^2+\frac{(1-\lambda^2)^2}{\alpha^2} e^4 \right),\\[7pt]
\omega_t^3=e^5-\frac{2d}{\alpha} e^6 + i\frac{1-\lambda^2}{\alpha}
e^6,
\end{array}
$$
satisfy $J_t\omega_t^k = i\,\omega_t^k$ for $k=1,2,3$, i.e. $\{\omega^1_t, \omega^2_t, \omega^3_t\}$ is a basis of type $(1,0)$ for $J_t$.  Furthermore,  with respect to this basis the complex structure equations are $$d\omega_t^1=d\omega_t^2=0,\quad d\omega_t^3=\omega_t^{12}+\omega_t^{1\bar1} + \lambda\,\omega_t^{1\bar2}+D\,\omega_t^{2\bar2},$$
with $D=i\,d(t,\lambda)$.
According to Theorem~\ref{Fro-general}, the Fr\"olicher spectral sequence degenerates if and only if $D\neq0$, i.e.
if and only if $t>0$. In conclusion, $J$ can be deformed into a non-abelian complex structure with $E_1\cong E_{\infty}$.
}
\end{example}

\begin{corollary}\label{def-h5-Fro}
Let $M=\nilm$ be the nilmanifold underlying the Iwasawa manifold, i.e. $\frg\cong \frh_5$.
Let $J$ be a non complex-parallelizable and non-abelian complex structure on $M$ given in Table 1
with $E_1\not\cong E_{\infty}$.
Then, $J$ can be deformed into an invariant complex structure with degenerate Fr\"olicher spectral sequence.
\end{corollary}

The Lie algebra $\frh_{15}$ has a rich complex geometry with respect to the Fr\"olicher sequence and in the next example
we construct a family $J_t$ along which the three cases in (f) of Theorem~\ref{Fro-general} are realized.

\begin{example}\label{example-h15}
{\rm
On $\frh_{15}$, let us consider the real basis $\{e^1,\ldots, e^6\}$ given in Theorem~\ref{clasif-complex}  and the following family of complex structures
\begin{eqnarray*}
J_te^1&=&-\sqrt{\frac{3(3-\sin t)(7+3\sin t)}{(5+\sin t)(11-\sin t)}}\,e^2, \\[6pt]
J_te^3&=&\sqrt{\frac{3(3-\sin t)(11-\sin t)}{(5+\sin t)(7+3\sin t)}}\,e^4, \\[6pt]
J_te^5&=&-\sqrt{\frac{(11-\sin t)(7+3\sin t)}{3(3-\sin t)(5+\sin t)}}\,e^6,
\end{eqnarray*}
where $t\in \mathbb{R}$.
Let
\begin{eqnarray*}
4\,\omega^1_t \!\!\!&=&\!\!\! \sqrt{(11-\sin t)(5+\sin t)}\,e^1 + i\, \sqrt{3(3-\sin t)(7+3\sin t)}\,e^2,\\[6pt]
8\, \omega^2_t \!\!\!&=&\!\!\! (5+\sin t)(7+3\sin t)\,e^3 - i\, \sqrt{3(5+\sin t)(3-\sin t)(11-\sin t)(7+3\sin t)}\,e^4,
\end{eqnarray*}
and
\begin{eqnarray*}
128\, \omega^3_t=(5+\sin t)(7+3\sin t)\!\! \!\!\!&\!\!\!&\!\!\! \left[3(3-\sin t)\sqrt{(11-\sin t)(5+\sin t)}\,e^5 \right. \\[4pt]
\!\!\!&\!\!\!&\!\!\! \left. \ + \,i\, (11-\sin t)\sqrt{3(3-\sin t)(7+3\sin t)}\,e^6 \right].
\end{eqnarray*}
Then, $\{\omega^1_t,\omega^2_t,\omega^3_t\}$ is a $(1,0)$-basis for $J_t$
satisfying
$$
d\omega^1_t=0,\quad
d\omega^2_t=\omega_t^{1\bar1},\quad
d\omega^3_t=\frac{1-\sin t}{2}\,\omega_t^{12} + 2\,\omega_t^{1\bar2} + \frac{1+\sin t}{4}\,\omega_t^{2\bar1}.
$$

If $\sin t =1$, 
the coefficient of $\omega_t^{12}$ vanishes and therefore $J_t$ is an abelian complex structure that is equivalent to the one given by  $(\rho, B_t, c_t) =(0,1,\frac14)$ (see Lemma~\ref{nilp-up-to-equiv}).  If $\sin t \neq 1$, then we can normalize the coefficient of $\omega_t^{12}$ and the complex structure equations can be written in form \eqref{reduced-noabel-nilp} as
$$
d\omega_t^1=0,\quad
d\omega_t^2=\omega^{1\bar1},\quad
d\omega_t^3=\omega_t^{12} + \frac{4}{1-\sin t}\,\omega_t^{1\bar2} + \frac{1+\sin t}{2(1-\sin t)}\,\omega_t^{2\bar1},
$$ i.e. they are determined by the triple $(\rho, B_t, c_t) = \left(1, \frac{4}{1-\sin t}, \frac{1+\sin t}{2(1-\sin t)} \right)$.  Now, concerning the Fr\"olicher spectral sequence for the family $\{J_t\}_{t\in \mathbb{R}}$, by Theorem~\ref{Fro-general}~(f) we get
\begin{itemize}
\item If $\sin t=1$, then $(\rho_t, B_t, c_t) = (0, 1, \frac14)$ and therefore $E_1\not\cong E_2\not\cong E_3 \cong E_{\infty}$.
\item If $\sin t=-1$, then $(\rho_t, B_t, c_t) = (1, 2, 0)$ and $E_1\not\cong E_2\cong E_{\infty}$.
\item If $|\sin t|\neq 1$,   $E_1\cong E_2\not\cong E_3 \cong E_{\infty}$.
\end{itemize}
}\end{example}




As a consequence of this example, in the following result we show that for $r\geq 2$ the dimension of the term $E_r^{p,q}(J_t)$
in general is neither upper nor lower semi-continuous function of~$t$.
This is in deep contrast with the case $r=1$, as it is well known the upper semicontinuity
of the Hodge numbers $\dim H^{p,q}_{\db}(J_t)$ with respect to $t$ along a deformation.

\begin{corollary}\label{no-semi-continuous}
Let $M$ be a nilmanifold with underlying Lie algebra~$\frh_{15}$
endowed with the invariant complex structures $J_t$ given in Example~\ref{example-h15}.
Then,
$$
\dim E_2^{0,2}(J_{\frac{\pi}{2}})=3 > 2=\dim E_2^{0,2}(J_{t}),
\quad
\dim E_2^{1,1}(J_{\frac{\pi}{2}})=2 < 3=\dim E_2^{1,1}(J_{t}),
$$
and
$$
\dim E_3^{0,2}(J_{\frac{\pi}{2}})=2 > 1=\dim E_3^{0,2}(J_{t}),
\quad
\dim E_3^{1,1}(J_{\frac{\pi}{2}})=2 < 3=\dim E_3^{1,1}(J_{t}),
$$
for any $t\in(\frac{\pi}{2},\frac{3\pi}{2})$. Therefore, the dimensions of the terms $E_2^{1,1}(J_t)$ and $E_3^{1,1}(J_t)$
are not upper semi-continuous functions of $t$, and the dimensions of the terms $E_2^{0,2}(J_t)$ and $E_3^{0,2}(J_t)$
are not lower semi-continuous functions of $t$.
\end{corollary}

\begin{proof}
It follows directly from the proof of Theorem~\ref{Fro-general} taking into account that for $t=\frac{\pi}{2}$ the complex structure lies in case (f.3) and for any $t\in(\frac{\pi}{2},\frac{3\pi}{2})$ the structures $J_t$ lie in case (f.2).
\end{proof}

\section{Strongly Gauduchon and balanced Hermitian metrics} \label{sG}

\noindent
Let $(M,J)$ be a complex manifold of complex dimension $n$.
A Hermitian metric $g$ on $(M,J)$ can be described by means of a positive definite smooth form $\Omega$ on $M$
of bidegree $(1,1)$ with respect to $J$.
We will use this approach in what follows and we will refer to $\Omega$ as a Hermitian structure or as a Hermitian metric indistinctly.

A Hermitian structure $\Omega$ is \emph{strongly Gauduchon} (\emph{sG} for short)
if $\partial\Omega^{n-1}$ is $\db$-exact~\cite{Pop0,Pop1}. In particular, any balanced Hermitian structure (i.e. $d\Omega^{n-1}=0$)
is sG, and any sG metric is a {\em Gauduchon} metric \cite{Gau}, that is, $\Omega^{n-1}$ is
$\partial \db$-closed or equivalently the Lee form is co-closed.

Next we suppose that $(M=\nilm, J)$ is a nilmanifold endowed with an invariant complex structure.
It is proved in \cite{FG} that $(M=\nilm, J)$ has a balanced metric if and only if it has an invariant one.
Moreover, by using the symmetrization process given in \cite{Be} (see also \cite{FG}, \cite{U} and \cite[Proposition 3.2]{UV2})
one easily arrives at:

\begin{proposition}\label{sym-proc-sG}
$(M=\nilm,J)$ has an sG metric if and only if it has an invariant one.
\end{proposition}

Therefore, the existence of sG metrics on $(M=\nilm,J)$ is reduced to the existence at the Lie algebra level $\frg$ of $G$.

\begin{corollary}\label{abelian-sG}
Let $\Omega$ be an invariant Hermitian structure on $(M=\nilm,J)$.
If $J$ is abelian, then $\Omega$ is sG if and only if it is balanced.
\end{corollary}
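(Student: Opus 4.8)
The plan is to establish the non-trivial implication, namely that an sG invariant metric $\Omega$ on $(M,J)$ with $J$ abelian is necessarily balanced; the converse is the general fact, recalled above, that every balanced metric is sG. First I would rewrite both properties in terms of $\partial$ and $\db$. As $\Omega$ has type $(1,1)$, the form $\Omega^{n-1}$ has type $(n-1,n-1)$, and integrability forces $d\Omega^{n-1}=\partial\Omega^{n-1}+\db\Omega^{n-1}$ with $\partial\Omega^{n-1}\in\bigwedge^{n,n-1}(\frg^*)$ and $\db\Omega^{n-1}=\overline{\partial\Omega^{n-1}}\in\bigwedge^{n-1,n}(\frg^*)$ (the a priori remaining components $\bigwedge^{n+1,n-2}(\frg^*)$ and $\bigwedge^{n-2,n+1}(\frg^*)$ vanish for dimension reasons). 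Hence $\Omega$ is balanced precisely when $\partial\Omega^{n-1}=0$, while $\Omega$ being sG means $\partial\Omega^{n-1}=\db\gamma$ for some $(n,n-2)$-form $\gamma$ on $M$; taking complex conjugates this reads $\db\Omega^{n-1}=\partial\bar\gamma$. Since $\partial\Omega^{n-1}$ is invariant, the symmetrization argument behind Proposition~\ref{sym-proc-sG} lets us take $\gamma$ invariant as well, so the whole discussion may be carried out at the level of $\frg$.

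The only use of the abelian hypothesis is this: by the characterization recalled in Section~\ref{ComplexStructures}, $J$ abelian means $d(\frg^{1,0})\subset\bigwedge^{1,1}(\frg^*)$, i.e. $\partial$ vanishes on $\frg^{1,0}=\bigwedge^{1,0}(\frg^*)$. Fix an invariant $(1,0)$-basis $\{\omega^j\}_{j=1}^n$ of $\frg^*$. For each $j$, the bidegree count $(n-1,n)+(1,0)=(n,n)$ and $\partial\omega^j=0$ give
\[
\db\Omega^{n-1}\wedge\omega^j=\partial\bar\gamma\wedge\omega^j=\partial(\bar\gamma\wedge\omega^j)=d(\bar\gamma\wedge\omega^j),
\]
the last equality because $\bar\gamma\wedge\omega^j$ has bidegree $(n-1,n)$, so its $\db$-part lies in $\bigwedge^{n-1,n+1}(\frg^*)=0$. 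Thus $\db\Omega^{n-1}\wedge\omega^j$ is an exact invariant $2n$-form, hence zero, either because nilpotent Lie algebras are unimodular (so $d$ annihilates $\bigwedge^{2n-1}(\frg^*)$) or, keeping $\gamma$ non-invariant, simply by integrating over $M$ and applying Stokes' theorem.

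It then remains to conclude $\db\Omega^{n-1}=0$ from $\db\Omega^{n-1}\wedge\omega^j=0$ for all $j$. This is pure linear algebra: writing $\eta_j=\omega^1\wedge\cdots\wedge\widehat{\omega^j}\wedge\cdots\wedge\omega^n$, the family $\{\eta_j\wedge\omega^{\bar1}\wedge\cdots\wedge\omega^{\bar n}\}_{j=1}^n$ is a basis of $\bigwedge^{n-1,n}(\frg^*)$ and $\{\omega^k\}_{k=1}^n$ a basis of $\bigwedge^{1,0}(\frg^*)$, and the wedge pairing of these into $\bigwedge^{n,n}(\frg^*)\cong\C$ has a diagonal matrix with nonzero entries; being non-degenerate, it forces $\db\Omega^{n-1}=0$. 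Then $\partial\Omega^{n-1}=\overline{\db\Omega^{n-1}}=0$ and $d\Omega^{n-1}=0$, so $\Omega$ is balanced.

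The bidegree bookkeeping and the non-degeneracy computation are routine. The one point to be careful about is the logical role of invariance: the argument never needs $\gamma$ itself to be invariant, because after wedging with the constant-coefficient form $\omega^j$ and using $\partial\omega^j=0$ the expression becomes globally $d$-exact on $M$, so Stokes applies --- and this is also exactly why it is legitimate, alternatively, to invoke Proposition~\ref{sym-proc-sG} to replace $\gamma$ by an invariant primitive. I expect this to be the main subtlety in writing out the proof cleanly.
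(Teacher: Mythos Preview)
Your argument is correct and rests on the same mechanism as the paper's: the abelian hypothesis gives $\partial\omega^j=0$ (equivalently $\db\omega^{\bar j}=0$), and combined with unimodularity this forces the sG condition to collapse to the balanced one. The paper compresses all of this into the single assertion that $\db\bigl(\bigwedge^{n,n-2}(\frg^*)\bigr)=0$ for abelian $J$, whence $\partial\Omega^{n-1}=\db\gamma=0$ immediately; your pairing-with-$\omega^j$ and Stokes/unimodularity argument is essentially an inline proof of (the conjugate of) that fact.
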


\begin{proof}
Let $\frg$ be the Lie algebra of $G$.
First we prove that $\bar \partial (\bigwedge^{n, k}(\frg^{*})) = 0$ for every $1\leq k\leq n$.
Let us consider a decomposable form $\alpha\in\bigwedge^{n,k}(\frg^{*})$ given by $\alpha = \beta\wedge\gamma$,
where $\beta\in\bigwedge^{n,0}(\frg^{*})$ and $\gamma\in\bigwedge^{0,k}(\frg^{*})$.
Since $\frg$ is nilpotent and $J$ is abelian, one has that $d\beta=0$ and $d\gamma\in \bigwedge^{1,k}(\frg^{*})$,
so in particular $\beta$ and $\gamma$ are $\bar\partial$-closed.
Hence,
$$
\bar\partial \alpha = (\bar\partial \beta)\wedge\gamma + (-1)^{n} \beta\wedge(\bar\partial \gamma)=0.
$$
Now, the statement in the corollary follows directly from Proposition~\ref{sym-proc-sG} and from the previous property for $k=n-2$,
i.e. $\db(\bigwedge^{n,n-2}(\frg^*))=0$.
\end{proof}

From now on we consider $n=3$.

\begin{proposition}\label{dim6-sG}
Let $M=\nilm$ be a $6$-dimensional nilmanifold endowed with an invariant complex structure $J$.
There exists an sG metric on $(M=\nilm,J)$ if and only if the Lie algebra $\frg$ of $G$ is isomorphic to $\frh_1,\ldots,\frh_6$ or $\frh_{19}^-$.
\end{proposition}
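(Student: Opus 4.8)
The plan is to reduce everything to the Lie algebra. By Proposition~\ref{sym-proc-sG}, if $(M,J)$ carries an sG metric then it carries an invariant one, so it suffices to show that none of the Lie algebras $\frh_7,\frh_8,\frh_9,\frh_{10},\ldots,\frh_{16},\frh_{26}^+$ admits an invariant sG metric for any complex structure on it. Since the sG condition is invariant under equivalence, we may use the complex structure equations of Tables~1 and~2. An invariant Hermitian metric is $2F=i\sum_{j,k=1}^{3}h_{j\bar k}\,\omega^j\wedge\omega^{\bar k}$ with $(h_{j\bar k})$ positive-definite Hermitian, and $F$ is sG exactly when $\partial F^2\in\db\bigl(\bigwedge^{3,1}(\frg^*)\bigr)$. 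Everything is read off in the basis $\{\omega^{123\bar1\bar2},\omega^{123\bar1\bar3},\omega^{123\bar2\bar3}\}$ of $\bigwedge^{3,2}(\frg^*)$, using that $(2F)^2=2\sum_{j<l,\,k<m}M_{jl,km}\,\omega^{jl\bar k\bar m}$, where $M_{jl,km}=h_{j\bar k}h_{l\bar m}-h_{j\bar m}h_{l\bar k}$ are the $2\times2$ minors of $(h_{j\bar k})$; the principal ones $M_{jj',jj'}=h_{j\bar j}h_{j'\bar j'}-|h_{j\bar j'}|^2$ are strictly positive.

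The first step is $\db\bigl(\bigwedge^{3,1}(\frg^*)\bigr)$. Using the reduced equations of Proposition~\ref{J-red} and Proposition~\ref{reduced-nonnilpotent} one checks $\omega^{123}$ is always closed, so $\db\omega^{123\bar k}=\omega^{123}\wedge\db\omega^{\bar k}$; this gives $\db\bigl(\bigwedge^{3,1}(\frg^*)\bigr)=\rho\,\langle\omega^{123\bar1\bar2}\rangle$ for a nilpotent $J$ and $\langle\omega^{123\bar1\bar3}\rangle$ for a non-nilpotent one. The second step is $\partial F^2$. Here the key simplification is that $\partial\omega^1=\partial\omega^2=0$ (resp. $\partial\omega^2=0$ when $d\omega^2=\omega^{1\bar1}$) and $\partial\omega^3\in\langle\omega^{12}\rangle$, so all contributions from differentiating $(1,0)$-generators vanish by repeated indices; thus $\partial F^2$ only feels the $(1,1)$-parts of $d\omega^{\bar2}$ and $d\omega^{\bar3}$, keeping the bookkeeping short. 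One finds: (a) if $d\omega^2=0$ then $\partial F^2$ is always a multiple of $\omega^{123\bar1\bar2}$, equal to $\bigl(\bar D\,M_{13,13}-\lambda\,M_{13,23}+M_{23,23}\bigr)\,\omega^{123\bar1\bar2}$ in the notation of Table~1; (b) if $d\omega^2=\omega^{1\bar1}$ then $\partial F^2=-\bigl(h_{2\bar2}h_{3\bar3}-|h_{2\bar3}|^2\bigr)\,\omega^{123\bar1\bar3}+(\ast)\,\omega^{123\bar1\bar2}$, with no $\omega^{123\bar2\bar3}$-component; (c) in the non-nilpotent case $\partial F^2$ has no $\omega^{123\bar2\bar3}$-component and its $\omega^{123\bar1\bar2}$-component is $\mp2\,\Imag(M_{13,23})-i\,\epsilon\,\bigl(h_{2\bar2}h_{3\bar3}-|h_{2\bar3}|^2\bigr)$.

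The proposition then follows case by case. For $\frh_8$ (abelian, $d\omega^2=0$) Table~1 gives $\lambda=D=0$, so $\partial F^2=\bigl(h_{2\bar2}h_{3\bar3}-|h_{2\bar3}|^2\bigr)\,\omega^{123\bar1\bar2}\neq0$ while $\rho=0$ forces $\db\bigl(\bigwedge^{3,1}\bigr)=0$; hence no sG (equivalently, by Corollary~\ref{abelian-sG}, no balanced) invariant metric exists. For $\frh_7$ and $\frh_9,\frh_{10},\ldots,\frh_{16}$ every complex structure has $d\omega^2=\omega^{1\bar1}$ and $\db\bigl(\bigwedge^{3,1}\bigr)\subseteq\langle\omega^{123\bar1\bar2}\rangle$, which misses the $\omega^{123\bar1\bar3}$-direction; so $\partial F^2\in\db\bigl(\bigwedge^{3,1}\bigr)$ would force $h_{2\bar2}h_{3\bar3}-|h_{2\bar3}|^2=0$, impossible. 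For $\frh_{26}^+$ ($\epsilon=1$, non-nilpotent) one has $\db\bigl(\bigwedge^{3,1}\bigr)=\langle\omega^{123\bar1\bar3}\rangle$, so sG forces the $\omega^{123\bar1\bar2}$- and $\omega^{123\bar2\bar3}$-components of $\partial F^2$ to vanish; the latter is automatic by (c), but the former has imaginary part $-\bigl(h_{2\bar2}h_{3\bar3}-|h_{2\bar3}|^2\bigr)\neq0$, a contradiction. (The same formulas show, consistently, that for the listed Lie algebras the obstruction disappears — for $\epsilon=0$ the offending imaginary part is absent, and for the nilpotent algebras with $\rho=1$ every invariant metric is sG — so the list is sharp.)

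The main obstacle is carrying out step~(b)/(c): one has to differentiate $F^2$ for each complex-structure family of Tables~1 and~2 and identify the coefficients in the $\bigwedge^{3,2}(\frg^*)$-basis in terms of the minors $M_{jl,km}$, being careful with signs. The decisive recurring point, which closes the argument uniformly, is that for every Lie algebra outside the list the sG condition pins down the principal minor $h_{2\bar2}h_{3\bar3}-|h_{2\bar3}|^2$ (or its real/imaginary part) to be zero, which positive-definiteness of $(h_{j\bar k})$ forbids.
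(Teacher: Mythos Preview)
Your proof is correct and takes the same route as the paper: reduce to invariant metrics via Proposition~\ref{sym-proc-sG}, compute $\db\bigl(\bigwedge^{3,1}(\frg^*)\bigr)$ and $\partial\Omega\wedge\Omega$ for each family of complex structure equations, and compare components in the basis $\{\omega^{123\bar1\bar2},\omega^{123\bar1\bar3},\omega^{123\bar2\bar3}\}$. The paper carries out exactly the same computation using the explicit coefficients $r,s,t,u,v,z$ of \eqref{F-nonnilp} where you use the $2\times 2$ minors $M_{jl,km}$; your cases (a), (b), (c) correspond to the paper's nilpotent case with $\epsilon=0$, nilpotent with $\epsilon=1$, and non-nilpotent, respectively, and the key obstruction in each excluded case is the same positive principal minor $s^2t^2-|v|^2=h_{2\bar2}h_{3\bar3}-|h_{2\bar3}|^2$.

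One point where your write-up is actually more careful than the paper's: the paper's proof in the nilpotent case deduces only $\epsilon=0$ and then asserts ``$\frg\cong\frh_i$ for $i=1,\ldots,6$'', but $\epsilon=0$ in \eqref{nilpotentJ} also allows $\frh_8$ (see Proposition~\ref{nueva}\,(i.5)). You close this gap explicitly: for $\frh_8$ one has $\rho=\lambda=0$, $D=0$, so $\partial F^2=M_{23,23}\,\omega^{123\bar1\bar2}\neq0$ while $\db\bigl(\bigwedge^{3,1}\bigr)=0$ (equivalently, $J$ is abelian and Corollary~\ref{abelian-sG} reduces sG to balanced, which fails).
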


\begin{proof}
By Proposition~\ref{sym-proc-sG} it suffices to study the invariant case. By \cite{U}, the fundamental 2-form of any $J$-Hermitian metric is given~by
\begin{equation}\label{F-nonnilp}
2\,\Omega=i\,(r^2\omega^{1\bar1}+s^2\omega^{2\bar2}+t^2\omega^{3\bar3})+u\omega^{1\bar2}-\bar
u\omega^{2\bar1}+v\omega^{2\bar3}-\bar
v\omega^{3\bar2}+z\omega^{1\bar3}-\bar
z\omega^{3\bar1},
\end{equation}
where coefficients $r^2,\,s^2,\,t^2$ are non-zero real numbers and $u,\, v,\, z\in\C$ satisfy $r^2s^2>|u|^2$, $s^2t^2>|v|^2$, $r^2t^2>|z|^2$ and $r^2s^2t^2 + 2\Real(i\bar u\bar v z)>t^2|u|^2 + r^2|v|^2 + s^2|z|^2$.

Let us start with the non-nilpotent case. From \eqref{eq-no-nilpot}
\begin{eqnarray*}
2\partial\Omega&=&(i\epsilon v\mp iz)\omega^{12\bar1} \mp iv\,\omega^{12\bar2} + (u-\bar u - \epsilon\,t^2)\omega^{13\bar1} + (is^2\pm t^2)\omega^{13\bar2} +\\ && v\,\omega^{13\bar3} + (is^2\mp t^2)\omega^{23\bar1}
\end{eqnarray*}
and therefore
$$
4\partial\Omega\wedge\Omega = \left(i\epsilon(s^2t^2-|v|^2)\pm (t^2u+t^2\bar u + iv\bar z - i\bar v z)\right)\,\omega^{123\bar 1\bar 2} + \left(uv-is^2z\right)\,\omega^{123\bar 1\bar 3}.
$$
Direct computations show that  $\bar\partial (\bigwedge^{3,1}(\frg^*))=\langle \omega^{123\bar 1\bar 3}\rangle$.  If the Hermitian structure $(J,\Omega)$ is sG then
$$
\mp i\epsilon(s^2t^2-|v|^2)=t^2(u+\bar u) + iv\bar z - i\bar v z.
$$
Since the left-hand side is purely imaginary and the right-hand side is real,
we get that $\epsilon=0$ and therefore $\frg\cong \frh_{19}^-$.

For the nilpotent case, let us consider the general complex equations~\eqref{nilpotentJ}. Now, the fundamental 2-form of any $J$-Hermitian metric is given also by \eqref{F-nonnilp}.  Using  \cite[Lemma 17 and Proposition 25]{U}, we get
\begin{eqnarray*}
4\partial\Omega\wedge \Omega \!\!\!&\!\!\!=\!\!\!&\!\!\!\left((1-\epsilon)\bar A(s^2t^2-|v|^2) + \bar B(it^2u + \bar vz) - \bar C(it^2\bar u - v\bar z)\right. \\ &&+ \left.(1-\epsilon)\bar D (r^2t^2-|z|^2)\right)\,\omega^{123\bar1\bar2} -\epsilon(s^2t^2-|v|^2)\,\omega^{123\bar1\bar3}.
\end{eqnarray*}
It is straightforward to verify that  $\bar\partial (\bigwedge^{3,1}(\frg^*))=\langle \rho\,\omega^{123\bar 1\bar 2}\rangle$, and therefore, if the Hermitian structure $(J,\Omega)$ is sG then $\epsilon=0$, i.e. $\frg\cong\frh_i$ for $i=1,\ldots, 6$.  Moreover, if in addition $\rho=1$, then any $J$-Hermitian structure is sG.

In conclusion, if there exists an sG metric then $\frg\cong \frh_1,\ldots,\frh_6$ or $\frh_{19}^-$.
The converse follows directly from \cite[Theorem 26]{U} because these Lie algebras admit balanced Hermitian metrics.
\end{proof}

\begin{remark}\label{anysG}
{\rm
From the proof of the previous proposition it follows that on $\frh_2,\frh_4,\frh_5$ and $\frh_6$, if $J$ is a non-abelian nilpotent complex structure then any invariant $J$-Hermitian metric is sG.
This is in contrast with $\frh_{19}^-$, where for any complex structure the space of balanced metrics is strictly contained in the
space of sG metrics, and moreover there are Hermitian metrics which are not sG. For instance, consider a Hermitian metric
on $\frh_{19}^-$ given by
$$
\Omega=\frac{i}{2}\,\omega^{1\bar1}+(u^2+z^2+1)i\,\omega^{2\bar2}+(u^2+z^2+1)i\,\omega^{3\bar3}+\frac{u}{2}(\omega^{1\bar2}-\omega^{2\bar1})+
\frac{z}{2}(\omega^{1\bar3}-\omega^{3\bar1}),
$$
that is, in \eqref{F-nonnilp} we take $r=1$, $v=0$, $u$ and $z$ real and $s^2=t^2=2(u^2+z^2+1)$:
\begin{itemize}
\item if $u=z=0$ then the metric is balanced;
\item if $u=0$ and $z\not=0$ then the metric is sG but not balanced;
\item if $u\not=0$ then the metric is not sG.
\end{itemize}
Notice that this indicates a contrast between the sG and SKT geometries, since by~\cite{FPS} the existence of an SKT structure
on a 6-dimensional nilpotent Lie algebra depends only on the complex structure.
}
\end{remark}

There exist compact complex manifolds having sG metrics
but not admitting any balanced metric~\cite[Theorem 1.8]{Pop2}.
Next we show the general situation for nilmanifolds in dimension~6.

\begin{proposition}\label{examples-sG-nobalanced}
Let $M=\nilm$ be a $6$-dimensional nilmanifold with an invariant complex structure $J$ such that
$(M=\nilm,J)$ does not admit balanced metrics. If $(M=\nilm,J)$ has sG metric, then
$J$ is non-abelian nilpotent and $\frg$ is isomorphic to $\frh_2$, $\frh_4$ or $\frh_5$.
Moreover, according to the classification in Table 1, such a $J$ is given by:
$\Real D+(\Imag D)^2\geq \frac14$ on $\frh_2$;  $\Real D\geq \frac14$ on $\frh_4$;
and $\lambda=0,\Imag D\not=0$ or $\lambda=\Imag D=0,\Real D\geq 0$ on $\frh_5$.
\end{proposition}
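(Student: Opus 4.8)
The plan is to first pin down the possible underlying Lie algebras and then reduce the balanced question to a short extremal computation over positive Hermitian forms. By Proposition~\ref{sym-proc-sG} we may work with invariant metrics, so Proposition~\ref{dim6-sG} shows that if $(\frg,J)$ carries an sG metric then $\frg\cong\frh_1,\dots,\frh_6$ or $\frh^-_{19}$. If $J$ is abelian then by Corollary~\ref{abelian-sG} every sG metric is already balanced, so $J$ must be non-abelian; this excludes $\frh_1$ and $\frh_3$ by Theorem~\ref{clasif-complex}(c). On $\frh_6$ the only complex structure up to equivalence is $d\omega^3=\omega^{12}+\omega^{1\bar1}+\omega^{1\bar2}$, and on $\frh^-_{19}$ only the non-nilpotent ones of Table~2; in each case the formula for $4\,\partial\Omega\wedge\Omega$ from the proof of Proposition~\ref{dim6-sG} produces an invariant balanced metric (for $\frh^-_{19}$ take $v=z=0$ with $u$ purely imaginary; for $\frh_6$ take $z=v=0$, $u=i\,s^2$ with $r^2>s^2$), so these two cannot occur. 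Hence $\frg\cong\frh_2,\frh_4$ or $\frh_5$ and $J$, being non-abelian, is nilpotent by Theorem~\ref{clasif-complex}(b). Finally, by Remark~\ref{anysG} every invariant $J$-Hermitian metric on such a $(\frg,J)$ is sG, so the sG hypothesis is automatic and the statement reduces to deciding, for each $(\lambda,D)$ in Table~1, whether $(\frg,J)$ admits no balanced metric. (The complex-parallelizable structure $d\omega^3=\omega^{12}$ on $\frh_5$ carries balanced metrics, e.g. the diagonal one, and is thus excluded.)

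So let $J$ be given by $d\omega^3=\omega^{12}+\omega^{1\bar1}+\lambda\,\omega^{1\bar2}+D\,\omega^{2\bar2}$ with $\lambda\ge0$ and $D=x+iy$, $y\ge0$. Writing a general invariant Hermitian form $\Omega$ as in~\eqref{F-nonnilp}, the proof of Proposition~\ref{dim6-sG} gives that $\Omega$ is balanced (equivalently $d\Omega^2=0$) if and only if
\[
(s^2t^2-|v|^2)+\lambda\,(it^2u+\bar v z)+\bar D\,(r^2t^2-|z|^2)=0 .
\]
The key point is that, for $H$ the positive-definite Hermitian matrix of $\Omega$ in the basis $\{\omega^j\}$, the three coefficients $s^2t^2-|v|^2$, $r^2t^2-|z|^2$ and $it^2u+\bar v z$ are exactly the entries $(\mathrm{adj}\,H)_{11}$, $(\mathrm{adj}\,H)_{22}$ and $(\mathrm{adj}\,H)_{12}$ of the adjugate. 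Dividing by $\det H>0$ and setting $G=H^{-1}$, the balanced condition becomes $G_{11}+\bar D\,G_{22}+\lambda\,G_{12}=0$; and since $G$ runs over all positive-definite Hermitian $3\times3$ matrices as $H$ does, its upper-left $2\times2$ block $\bigl(\begin{smallmatrix}a&\bar b\\ b&c\end{smallmatrix}\bigr)$ runs over \emph{all} positive-definite Hermitian $2\times2$ matrices. Therefore $(\frg,J)$ admits a balanced metric if and only if there exist $a,c>0$ and $b\in\C$ with $ac>|b|^2$ and $a+\bar D\,c+\lambda\bar b=0$.

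It then remains to solve this condition case by case. If $\lambda>0$ we may eliminate $b$, obtaining the existence of $a,c>0$ with $|a+\bar D c|^2<\lambda^2 ac$; by homogeneity set $c=1$. On $\frh_2$ and $\frh_4$, where $\lambda=1$, this reads $a-(a+x)^2>y^2$ for some $a>0$; the supremum of $a-(a+x)^2$ over $a>0$ equals $\tfrac14-x$ (attained at $a=\tfrac12-x$) when $x<\tfrac12$ and is non-positive otherwise, so the condition holds exactly when $x+y^2<\tfrac14$. Hence there is no balanced metric precisely for $x+y^2\ge\tfrac14$ on $\frh_2$ and for $x\ge\tfrac14$ on $\frh_4$ (where $y=0$). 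For the $\frh_5$-structures of Table~1 with $\lambda>0$ one has $x=0$, so the condition becomes $a^2-\lambda^2 a+y^2<0$ for some $a>0$, i.e. $2y<\lambda^2$; each of the Table~1 bounds ($2y<\lambda^2$ when $0<\lambda^2<\tfrac12$; $2y<1-\lambda^2\le\lambda^2$ when $\tfrac12\le\lambda^2<1$; $2y<\lambda^2-1<\lambda^2$ when $\lambda^2>1$) forces it, so those $J$ always admit a balanced metric. When $\lambda=0$ the condition is $a+\bar D c=0$ with $a,c>0$, solvable if and only if $\bar D$ is a negative real number, i.e. $y=0$ and $x<0$; hence on $\frh_5$ with $\lambda=0$ there is no balanced metric exactly when $\lambda=0$, $y\neq0$ or $\lambda=y=0$, $x\ge0$. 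Assembling the three cases yields the asserted classification.

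The step carrying the weight is the identity of the second paragraph rewriting the balanced equation as $G_{11}+\bar D\,G_{22}+\lambda\,G_{12}=0$ for an arbitrary positive-definite Hermitian $G$: one must verify the adjugate computation and the elementary but essential fact that every positive-definite Hermitian $2\times2$ block is realized as the upper-left block of a positive-definite Hermitian $3\times3$ matrix. Once this is in hand, each remaining case is a single-variable extremum and is routine. Alternatively, the whole balanced analysis of the second and third paragraphs can be replaced by quoting the classification of complex structures carrying balanced metrics in~\cite{UV2}, after matching its parameters with those of Table~1.
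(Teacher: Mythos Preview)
Your proof is correct. The first paragraph matches the paper's reduction almost exactly (the paper simply asserts that every complex structure on $\frh_6$ and $\frh^-_{19}$ admits balanced metrics, whereas you construct such metrics explicitly; your construction is valid).

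The genuinely different step is your treatment of the balanced condition. The paper invokes a normalization from \cite[Proposition~2.3]{UV2}, which allows one to take $r^2=1$ and $v=z=0$ in~\eqref{F-nonnilp}; this collapses the balanced equation to $s^2+x+yi=\lambda u_2+\lambda u_1 i$, and the case analysis then involves only $s^2$ and $u$. Your route avoids this external normalization by observing that the three coefficients $s^2t^2-|v|^2$, $r^2t^2-|z|^2$ and $it^2u+\bar v z$ are precisely the $(1,1)$, $(2,2)$ and $(1,2)$ entries of $\operatorname{adj}H$, so that after dividing by $\det H$ the balanced condition becomes a linear constraint on the upper-left $2\times2$ block of $G=H^{-1}$, which ranges over all positive-definite Hermitian $2\times2$ matrices. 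The two approaches are equivalent (the paper's normalized metric corresponds, after rescaling, to a $2\times2$ block with $G_{22}=1$), but yours is more self-contained and makes transparent why the problem reduces to a single extremal calculation in one real variable. The subsequent case analysis ($\lambda=1$ for $\frh_2,\frh_4$; $\lambda>0$, $x=0$ for $\frh_5$; and $\lambda=0$) coincides with the paper's and is carried out correctly.
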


\begin{proof}
Any complex structure on $\frh_6$ or $\frh_{19}^-$ admits balanced metrics.
From \cite{UV2} we have that only $\frh_3$ and $\frh_5$ have abelian complex structures $J$
admitting balanced metric. In fact, any such $J$ on $\frh_5$ admits balanced Hermitian
metrics, whereas for $\frh_3$ the complex structure must be equivalent to the choice of $(-)$-sign in Table 1.
From Corollary~\ref{abelian-sG}, it remains to study the non-abelian nilpotent complex structures $J$
on $\frh_2$, $\frh_4$ and $\frh_5$. Since any such $J$ admits sG metrics by Remark~\ref{anysG},
next we show which of them do not admit balanced metrics.

In the three cases the complex equations are of the form
\begin{equation}\label{J-nilp}
d\omega^1 = d\omega^2 = 0,\quad\ \   d\omega^3 =
\omega^{12} + \omega^{1\bar{1}} + \lambda\, \omega^{1\bar{2}} +
D\, \omega^{2\bar{2}}.
\end{equation}
A similar argument as in the proof of \cite[Proposition 2.3]{UV2} shows that, up to equivalence,
the fundamental 2-form of any $J$-Hermitian metric is given~by
$$
2\,\Omega=i\,(\omega^{1\bar1}+s^2\,\omega^{2\bar2}+t^2\,\omega^{3\bar3})+u\,\omega^{1\bar2}-\bar
u\,\omega^{2\bar1},
$$
where $s^2>|u|^2$ and $t^2>0$.

If $D=x+i y$ and $u=u_1+i u_2$, the balanced condition is
\begin{equation}\label{balanced-condition-nilp}
s^2+x+i y = u_2 \lambda  + i u_1 \lambda.
\end{equation}
We distinguish several cases depending on the values of $\lambda$.

If $\lambda\not=0$ then $\Omega$ is balanced if and only if $u_1=y/\lambda$ and $u_2=(s^2+x)/\lambda$.
The condition $s^2>|u|^2$ is equivalent to $s^4+(2x-\lambda^2)s^2+x^2+y^2 < 0$
and it is easy to see that a non-zero $s$ satisfying this condition exists if and only if  the discriminant of the previous equation as a second degree equation in $s^2$ is positive, i.e.
\begin{equation}\label{lambda-balanced}
\lambda^4-4x\lambda^2-4y^2 >0.
\end{equation}
According to Table 1, non-abelian complex structures on $\frh_2$ have $\lambda=1$.  In this case \eqref{lambda-balanced} reads as $x+y^2<1/4$, which means that any $J$ such that $x+y^2\geq \frac14$ has no balanced metrics.
Similarly, for $\frh_4$ any $J$ such that $x\geq \frac14$ does not admit balanced metric.

For $\frh_5$ and $\lambda\not=0$ we have that $x=0$ by Table 1. Thus, there is no balanced metrics
if and only if $\lambda^4\leq 4y^2$. Since $y\geq 0$, this is equivalent to $\lambda^2\leq 2y$.
However, none of the three cases detailed in Table 1 verifies that $\lambda^2\leq 2y$, and therefore any complex structure on $\frh_5$ with $\lambda\neq 0$ admits balanced metrics.

Finally, in the case $\lambda=0$ on $\frh_5$ we get that the balanced condition~\eqref{balanced-condition-nilp}
reduces to $y=0$ and $s^2=-x>0$.
From Table 1 we have that $0<1+4x$, i.e. $x\in (-\frac14,\infty)$. Therefore, if $y\not=0$ or $y=0, x\geq 0$ then
there are no balanced metrics.
\end{proof}

As pointed out by Popovici~\cite{Pop2}, the degeneration of the Fr\"olicher sequence at $E_1$ and the existence
of sG metrics are unrelated. From the study of the sG geometry above and from Theorem~\ref{Fro-general}
we get:

\begin{theorem}\label{dim6-sG-Frolicher}
Let $M=\nilm$ be a $6$-dimensional nilmanifold endowed with an invariant complex structure $J$.
If there exists an sG metric then
the Fr\"olicher spectral sequence degenerates at the second level, i.e. $E_2(M)\cong E_{\infty}(M)$.
Moreover, if there exists an sG metric and $\frg\not\cong \frh_5$, then $E_1(M)\cong E_{\infty}(M)$.
\end{theorem}

\begin{proof}
By Proposition~\ref{dim6-sG}, the Lie algebra $\frg$ underlying $M=\nilm$ must be isomorphic to
$\frh_1,\ldots,\frh_6$ or $\frh_{19}^-$, so Theorem~\ref{Fro-general} implies
that the Fr\"olicher sequence degenerates at the second level.
The last assertion follows directly by taking into account Corollary~\ref{abelian-sG} and Table~3 below.
\end{proof}

It is interesting whether this result holds in general, that is:

\begin{question}\label{open-q}
Does the Fr\"olicher spectral sequence degenerate at the second step
for any compact complex manifold $M$ of complex dimension $3$ admitting an sG metric?
\end{question}

\medskip

In the following table we show the complex structures $J$, up to equivalence, on $\frh_1,\ldots,\frh_6$ that admit
balanced Hermitian metrics. The classification follows from the proof of Proposition~\ref{examples-sG-nobalanced}.

\bigskip

\begin{center} {\large\bf Table 3: Classification of nilpotent complex structures \\admitting balanced metrics}\vspace{5pt}

{
\begin{tabular}{|c|l|l|}
\hline
$\frg$ & \textsf{Abelian structures} & \textsf{Non-Abelian Nilpotent structures}\\
\hline && \\[-7pt]
$\frh_{1}$ &  $d\omega^2=0,\ d\omega^3=0$ &  \hskip3cm{\bf ---} \\
\hline &&\\[-9pt]
$\frh_{2}$ & \hskip1.5cm{\bf ---}
 & \hskip-.2cm$
\begin{array}{rl}
\zzz&\zzz d\omega^2 =0,\ d\omega^3 =\omega^{12} +
\omega^{1\bar1}+\omega^{1\bar2}+(x+iy)\,\omega^{2\bar2},\\[2pt]
\zzz&\zzz y>0,\quad x+y^2< \frac14
\end{array}
$ \\
\hline &&\\[-9pt]
$\frh_{3}$ & $d\omega^2 = 0,\ d\omega^3
=\omega^{1\bar{1}}- \omega^{2\bar{2}}$ & \hskip3cm{\bf ---} \\
\hline &&\\[-9pt]
$\frh_{4}$ & \hskip1.5cm{\bf ---}
& \hskip-.2cm$
\begin{array}{rl}
\zzz&\zzz d\omega^2 =0,\ d\omega^3=\omega^{12} +
\omega^{1\bar1}+\omega^{1\bar2}+x\,\omega^{2\bar2},\\[2pt]
\zzz&\zzz x<\frac14, \ x\neq 0
\end{array}
$ \\[1pt]
\hline &&\\[-9pt]
 &  & $d\omega^2 =0,\ d\omega^3=\omega^{12}$ \\
\cline{3-3}
& & \\[-8pt]
&  & $d\omega^2 =0,\ d\omega^3=\omega^{12} +
\omega^{1\bar1}+\lambda\,\omega^{1\bar 2} +(x+iy)\,\omega^{2\bar2},$ \\[2pt]
& $d\omega^2=0,$& $\mbox{with } (\lambda,x,y) \mbox{ satisfying one of:}$ \\[2pt]
$\frh_{5}$& $d \omega^3=\omega^{1\bar{1}}+\omega^{1\bar{2}}+x\,\omega^{2\bar{2}}$,& $\bullet\ \lambda=y=0, \, x\in\left(-\frac14, 0\right)$;  \\[2pt]
& $0\leq x<\frac14$& $\bullet\  0<\lambda^2<\frac 12,\ 0\leq y<\frac{\lambda^2}{2},\ x=0$;\\[3pt]
& & $\bullet\  \frac 12\leq \lambda^2< 1,\ 0\leq y<\frac{1-\lambda^2}{2},\ x=0$;\\[3pt]
& & $\bullet\  \lambda^2>1,\ 0\leq y<\frac{\lambda^2-1}{2},\ x=0$. \\ [2pt]
\hline && \\[-7pt]
$\frh_{6}$ & \hskip1.5cm{\bf ---} & $d\omega^2 =0,\ d\omega^3 =\omega^{12} + \omega^{1\bar{1}} + \omega^{1\bar{2}}$ \\
\hline
\end{tabular}}
\end{center}

\bigskip

Motivated by \cite[Theorem 1.9]{Pop2} next we study the relation between the degeneration
of the Fr\"olicher spectral sequence and the existence of sG or balanced metrics. The possibilities are
well illustrated in the following deformations of the complex structure corresponding to $\lambda=x=y=0$ on
a nilmanifold with underlying Lie algebra~$\frh_5$.

\begin{example}\label{def-on-h5}
{\rm Let us consider the Lie algebra  $\frh_5$ with the real basis $\{e^1,\ldots,e^6\}$ described in Theorem~\ref{clasif-complex}.
Let us consider the complex structure $J_{0,0}$ given by
$$
\begin{array}{lll}
J_{0,0}\,e^1 = -e^2,\quad &
J_{0,0}\,e^3 = -2e^2 - e^4,\quad & J_{0,0}\,e^5 = -e^6, \\[5pt]
J_{0,0}\,e^2 = e^1,\quad &
J_{0,0}\,e^4 = -2 e^1 +  e^3,\quad & J_{0,0}e^6 = e^5.
\end{array}
$$
With respect to the $(1,0)$-basis $\omega^k_{0,0} = e^{2k-1} - i\,J_{0,0}e^{2k-1},$ for $k=1,2,3$,
the complex structure equations are given by \eqref{epsilonzero-red-new-bis} where $(\rho, \lambda, D) = (1,0,0)$.  Therefore,
there are sG metrics (according to Remark~\ref{anysG} because $\rho=1$), there do not exist balanced metrics (see Table 3) and $E_1\not\cong E_2\cong E_{\infty}$ (by Theorem~\ref{Fro-general} (c.2) since $\rho D = 0$).


We consider the following deformation of $J_{0,0}$: 
$$
\begin{array}{ll}
J_{\lambda,0}\,e^1 = -e^2,\quad &
J_{\lambda,0}\,e^2 = e^1,\\[5pt]
J_{\lambda,0}\,e^3 = \frac{-1}{1-\lambda}(2e^2 + (1+\lambda)\,e^4),\quad &
J_{\lambda,0}\,e^4 = \frac{1}{1+\lambda}(-2 e^1 +  (1-\lambda)\,e^3),\\[5pt]
J_{\lambda,0}\,e^5 = -e^6,\quad & J_{\lambda,0}e^6 = e^5,
\end{array}
$$
where $\lambda^2\in [0,\frac12)$.  The $(1,0)$-basis $\omega^k_{\lambda,0} = e^{2k-1} - i\,J_{\lambda,0}e^{2k-1},$ for $k=1,2,3$,
satisfies  \eqref{epsilonzero-red-new-bis} where $(\rho, \lambda, D) = (1,\lambda,0)$.
If $\lambda^2\in (0,\frac12)$, then there are balanced metrics by Table 3 and $E_1\not\cong E_2\cong E_{\infty}$ by Theorem~\ref{Fro-general} (c.2).

Finally, let us consider this other deformation of $J_{0,0}$:
$$
\begin{array}{ll}
J_{0,x}\,e^1 = \frac{1}{\sqrt{1+4x}}\, \left[(4x-1)e^2 + 2x e^4\right],\quad &
J_{0,x}\,e^2 = \sqrt{1+4x}\, e^1 + \frac{2x}{\sqrt{1+4x}}\, e^3,\\[5pt]
J_{0,x}\,e^3 = -\sqrt{1+4x}\,(2e^2 + e^4),\quad &
J_{0,x}\,e^4 = -2\sqrt{1+4x}\, e^1 + \frac{1-4x}{\sqrt{1+4x}}\, e^3,\\[5pt]
J_{0,x}\,e^5 = -\sqrt{1+4x}\,e^6,\quad & J_{0,x}e^6 = \frac{1}{\sqrt{1+4x}}\,e^5,
\end{array}
$$ where $x\in(-\frac14, \infty)$.
The $(1,0)$-basis $\omega^1_{0,x} = i(e^2- i J_{0,x}e^2)$, $\omega^2_{0,x} =\frac{1}{\sqrt{1+4x}} (e^3- i J_{0,x}e^3)$, $\omega^3_{0,x} = e^5- i J_{0,x}e^5$
satisfies \eqref{epsilonzero-red-new-bis} with $(\rho, \lambda, D) = (1,0,x)$.  Using Theorem~\ref{Fro-general}, Remark~\ref{anysG} and Table 3 we get:
\begin{itemize}
\item
If $x\in (-\frac14,0)$ then there are balanced metrics and $E_1\cong E_{\infty}$.
\item
If $x\in (0,\infty)$ then there are sG metrics, there do not exist balanced metrics and $E_1\cong E_{\infty}$.
\end{itemize}

}
\end{example}

\medskip

Next we address some problems on deformation openness or closedness of several properties.
Let $\Delta$ be an open disc around the origin in $\mathbb{C}$. Following \cite[Definition 1.12]{Pop2},
a given property $\mathcal{P}$ of a compact complex manifold is said to be \emph{open}
under holomorphic deformations if for every holomorphic family of compact complex manifolds
$(M,J_a)_{a\in \Delta}$ and for every $a_0\in \Delta$ the following implication holds:

\medskip

$(M,J_{a_0})$ has property $\mathcal{P}$ $\Longrightarrow$ $(M,J_a)$ has property $\mathcal{P}$ for all $a\in\Delta$ sufficiently

\hskip4.65cm close to $a_0$.

\medskip

A given property $\mathcal{P}$ of a compact complex manifold is said to be \emph{closed} under holomorphic deformations if for every holomorphic family of compact complex manifolds $(M,J_a)_{a\in \Delta}$ and for every $a_0\in \Delta$ the following implication holds:

\medskip

$(M,J_{a})$ has property $\mathcal{P}$ for all $a\in \Delta\backslash \{a_0\}$ $\Longrightarrow$ $(M,J_{a_0})$ has property $\mathcal{P}$.

\medskip

Alessandrini and Bassanelli proved in \cite{AB} (see also \cite{FG}) that the balanced property of compact complex manifolds is not deformation open.
In contrast, Popovici has shown in \cite{Pop1} that the sG property is open under
holomorphic deformations, and conjectured in \cite[Conjectures 1.21 and 1.23]{Pop2} that both the sG and the balanced properties of compact complex manifolds are closed under holomorphic deformation.

The following result provides a counterexample to both conjectures.
For that, we start with
a nilmanifold $M$
with underlying Lie algebra isomorphic to $\frh_4$,
endowed with its abelian complex structure, which we will denote by $J_0$. We know from Corollary~\ref{abelian-sG} and Table~3 that
the complex nilmanifold $(M,J_0)$ does not admit sG metrics.
The idea is to deform holomorphically $J_0$ in an open disc $\Delta=\{ a\in \mathbb{C}\mid |a|<1 \}$ around the origin so that $J_a$ admits balanced metric for any $a\not=0$.
To find such a deformation we will use a result by Maclaughlin, Pedersen, Poon and Salamon \cite{MPPS} that describes the Kuranishi space
of the abelian complex structure $J_0$ in terms of invariant forms.
We will combine this result with our existence result
of balanced metrics (see Table~3).

\begin{theorem}\label{counterexample}
There is a holomorphic family $(M,J_a)_{a\in \Delta}$ of compact complex manifolds,
where $\Delta=\{ a\in \mathbb{C}\mid |a|<1 \}$, such that $(M,J_a)$ has balanced metrics for each $a\in \Delta\backslash \{0\}$,
but $(M,J_0)$ does not admit any strongly Gauduchon metric.  In particular, the sG property and the balanced property are not closed under holomorphic deformations.
\end{theorem}

\begin{proof}
Let $M$ be a nilmanifold with underlying Lie algebra~$\frh_4$ and let $J_0$ be its abelian complex structure.
Recall (see Table~1) that there is a (1,0)-basis $\{\omega^1,\omega^2,\omega^3\}$ for $J_0$ satisfying
$d\omega^1=d\omega^2=0$ and $d\omega^3=\omega^{1\bar{1}} +\omega^{1\bar{2}} +\frac14 \omega^{2\bar{2}}$.
However, instead of using these structure equations for $J_0$, we will consider another (1,0)-basis given by
$\{\eta^1=2\omega^1 + \omega^2, \eta^2=4i\,\omega^1 + i\,\omega^2, \eta^3=2i\,\omega^3\}$ which satisfies
$$
d\eta^1=d\eta^2=0,\quad d\eta^3=\frac{i}{2} \eta^{1\bar{1}} +\frac12 \eta^{1\bar{2}} +\frac12 \eta^{2\bar{1}}.
$$
The reason for using these complex structure equations for $J_0$ instead of the previous ones is that the latter
are better adapted to the deformation parameter space of $J_0$ found by Maclaughlin, Pedersen, Poon and Salamon in \cite{MPPS}.
In fact, using Kuranishi's method, it is proved in \cite[Example 8]{MPPS} (see also \cite{KS}) that $J_0$ has a locally complete family of deformations consisting entirely of invariant complex structures and obtained the deformation parameter space in terms of the invariant forms
$\eta^1,\eta^2,\eta^3,\eta^{\bar{1}},\eta^{\bar{2}},\eta^{\bar{3}}$.
Indeed, any complex structure sufficiently close to $J_0$ has a basis $\{\mu_\Phi^1,\mu_\Phi^2,\mu_\Phi^3\}$ of $(1,0)$-forms such that
\begin{equation}\label{def-space}
\left\{
\begin{array}{lcl}
\mu_\Phi^1 \zzz & = &\zzz \eta^1 + \Phi^1_1\, \eta^{\bar{1}} + \Phi^1_2\, \eta^{\bar{2}},\\[4pt]
\mu_\Phi^2 \zzz & = &\zzz \eta^2 +\Phi^2_1\, \eta^{\bar{1}} + \Phi^2_2\, \eta^{\bar{2}},\\[4pt]
\mu_\Phi^3 \zzz & = &\zzz \eta^3 + \Phi^3_3\, \eta^{\bar{3}},
\end{array}
\right.
\end{equation}
where the coefficients $\Phi^j_k$ are sufficiently small and satisfy the condition $i(1+\Phi^3_3)\Phi^1_2=(1-\Phi^3_3)(\Phi^1_1-\Phi^2_2)$.
Therefore, the Kuranishi space has dimension 4. Moreover, the
complex structures remain abelian if and only if $\Phi^1_2=0$ and $\Phi^1_1=\Phi^2_2$.

Next we will find a particular holomorphic deformation for $J_0$ which is not abelian and having balanced metrics.
Let $\Delta=\{ a\in \mathbb{C}\mid |a|<1 \}$. For each $a\in \Delta$, we consider the basis $\{\mu_a^1,\mu_a^2,\mu_a^3\}$ of complex 1-forms
given by
$$\mu_a^1=\eta^1 +a \eta^{\bar{1}} -i a \eta^{\bar{2}},\quad \mu_a^2=\eta^2,\quad \mu_a^3=\eta^3.$$
Note that this corresponds to take $\Phi^1_1=a$, $\Phi^1_2=-ia$ and $\Phi^2_1=\Phi^2_2=\Phi^3_3=0$ in
the parameter space~\eqref{def-space}.
Notice also that this basis defines implicitly an invariant complex structure $J_a$ on $M$ just by
declaring that the forms $\mu_a^1,\mu_a^2,\mu_a^3$ are of type (1,0) with respect to $J_a$.
Moreover, a direct calculation shows that the complex structure equations for $J_a$, with respect to this basis, are
\begin{equation}\label{ecus-disc-a}
d\mu_a^1=d\mu_a^2=0,\quad 2(1-|a|^2) d\mu_a^3=2\bar{a} \mu_a^{12}+i \mu_a^{1\bar{1}} +\mu_a^{1\bar{2}} + \mu_a^{2\bar{1}} -i|a|^2 \mu_a^{2\bar{2}},
\end{equation}
for each $a\in\Delta$.

Recall that by Corollary~\ref{abelian-sG} and Table~3, if $a=0$ then the complex nilmanifold $(M,J_0)$ does not admit sG metrics
because $J_0$ is abelian.

For each $a\in \Delta\backslash \{0\}$ the complex structure is nilpotent but not abelian. In this case we can normalize the coefficient
of $\mu_a^{12}$ by taking $\frac{1-|a|^2}{\bar{a}}\mu_a^3$ instead of $\mu_a^3$, so we can suppose that the complex structure equations are
$$
d\mu_a^1=d\mu_a^2=0,\quad d\mu_a^3= \mu_a^{12}+\frac{i}{2\bar{a}} \mu_a^{1\bar{1}} + \frac{1}{2\bar{a}}(\mu_a^{1\bar{2}} + \mu_a^{2\bar{1}}) -\frac{ia}{2} \mu_a^{2\bar{2}}.
$$
Moreover, with respect to the (1,0)-basis given by $\{\tau_a^1=\mu_a^1-i\mu_a^2$, $\tau_a^2=-2\bar{a}i\,\mu_a^2$, $\tau_a^3=-2\bar{a}i\,\mu_a^3\}$,
the structure equations for $J_a$ become
$$
d\tau_a^1=d\tau_a^2=0,\quad d\tau_a^3=\tau_a^{12} + \tau_a^{1\bar1} -\frac{1}{a}\, \tau_a^{1\bar2} + \frac{1-|a|^2}{4|a|^2}\,\tau_a^{2\bar2}.
$$
Now, according to Proposition~\ref{nueva} we can suppose that the coefficient of $\tau_a^{1\bar2}$ is equal to $1/|a|$
(in fact, any complex structure given by $(\rho, B, D)$ is equivalent to $(\rho, |B|, D)$ where $B\in\mathbb C$).

In conclusion, for each $a\in \Delta\backslash \{0\}$ there exists a (1,0)-basis for which the complex equations
are of the form \eqref{J-nilp} with $\lambda=\frac{1}{|a|}$ and $D=\frac{1-|a|^2}{4|a|^2}$.
Taking $x=\Real D=\frac{1-|a|^2}{4|a|^2}$ and $y=\Imag D=0$, one has $\lambda^2-4x=1$.
Now, following the proof of Proposition~\ref{examples-sG-nobalanced}, since $\lambda\not=0$ the complex structure $J_a$
admits a balanced metric if and only if
\eqref{lambda-balanced} is satisfied. But the latter condition reads
$$
\lambda^2(\lambda^2-4x)=\frac{1}{|a|^2}>0,
$$
so there exists a balanced Hermitian metric for each $a\in \mathbb{C}$ such that $0<|a|<1$.
\end{proof}

\begin{remark}\label{failure}
{\rm
It is worth giving a closer look at the failure of the sG property at $a=0$.
Let $J_a$ be the family of complex structures given by
\eqref{ecus-disc-a} for any $a\in\Delta=\{a\in \mathbb{C}\mid |a|<1 \}$, and
let us consider the real 2-form $\Omega$
of bidegree (1,1) for $J_a$
given by
$$
2\Omega=i r^2\, \mu_a^{1\bar{1}} + i s^2\, \mu_a^{2\bar{2}} + i t^2\, \mu_a^{3\bar{3}},
$$
where $r,s,t\in \mathbb{R}$. Since
$$
4\Omega\wedge d\Omega=\frac{it^2}{2(1-|a|^2)}(s^2-|a|^2r^2)(\mu_a^{12\bar{1}\bar{2}\bar{3}}-\mu_a^{123\bar{1}\bar{2}}),
$$
the 4-form $\Omega^2$ is closed if and only if $s^2=|a|^2r^2$, i.e. if and only if $\Omega$ is given by
$$
2\Omega=i r^2\, \mu_a^{1\bar{1}} + i |a|^2r^2\, \mu_a^{2\bar{2}} + i t^2\, \mu_a^{3\bar{3}}.
$$
This defines a balanced $J_a$-Hermitian metric for any $r,t>0$ and for any $0<|a|<1$.
However, in the central limit, $a=0$ and the form $\Omega$ becomes degenerate, i.e. $\Omega^3=0$,
therefore it does not define a $J_0$-Hermitian metric because the fundamental form of any Hermitian structure
is always non degenerate.
}
\end{remark}

It is well known that the property of \emph{``the Fr\"olicher spectral sequence degenerating at $E_1$''}
is open under holomorphic deformations. In \cite[Theorem 5.4]{ES} it is proved that this property
is not closed under holomorphic deformations. As a consequence of
Theorems~\ref{Fro-general} and~\ref{counterexample}
we obtain another example based on the complex geometry of $\frh_4$.

\begin{corollary}\label{nonclosedFro}
Let $(M,J_0)$ be a nilmanifold with underlying Lie algebra~$\frh_4$ endowed with abelian complex structure $J_0$.
There is a holomorphic family of compact complex manifolds $(M,J_a)_{a\in \Delta}$,
where $\Delta=\{a\in \mathbb{C}\mid |a|<1 \}$, such that $E_1(M,J_a)\cong E_{\infty}(M,J_a)$ for each $a\in \Delta\backslash \{0\}$,
but $E_1(M,J_0)\not\cong E_{\infty}(M,J_0)$.
\end{corollary}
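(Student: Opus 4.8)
The plan is to take as the desired family the very same holomorphic family $(M,J_a)_{a\in\Delta}$, $\Delta=\{a\in\C\mid |a|<1\}$, produced in the proof of Theorem~\ref{counterexample}, and to read off the behaviour of the Fr\"olicher sequence of each member directly from the classification results already obtained. Recall that in that proof one exhibits, for every $a\in\Delta\setminus\{0\}$, a $(1,0)$-basis in which $J_a$ has structure equations of the form \eqref{J-nilp} with $\lambda=1/|a|\neq 0$ and $D=(1-|a|^2)/(4|a|^2)$; in particular $J_a$ is a non-abelian nilpotent complex structure on a nilmanifold with underlying Lie algebra $\frh_4$, whereas $J_0$ is the abelian complex structure on the same $\frh_4$.

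The first step is to pass to the Lie algebra level. Since $\frh_4\not\cong\frh_7$, the isomorphism \eqref{iso-Frol-sequence} applies to every invariant complex structure on $M$, so $E_r^{p,q}(M,J_a)\cong E_r^{p,q}(\frh_4,J_a)$ for all $p,q$, all $r\geq 1$ and each $a\in\Delta$. Hence it is enough to describe the Fr\"olicher sequences $E_r(\frh_4,J_a)$. For $a\neq 0$ the complex structure $J_a$ is non-abelian, so by Theorem~\ref{degenera}(b) the Fr\"olicher sequence of $(\frh_4,J_a)$ collapses at the first step; combined with \eqref{iso-Frol-sequence} this gives $E_1(M,J_a)\cong E_{\infty}(M,J_a)$ for every $a\in\Delta\setminus\{0\}$. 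On the other hand $J_0$ is abelian, so Theorem~\ref{degenera}(b) (equivalently Theorem~\ref{no-degenera}(a), which moreover specifies the finer statement $E_1\not\cong E_2\cong E_{\infty}$) shows that the Fr\"olicher sequence of $(\frh_4,J_0)$ does not degenerate at $E_1$; hence $E_1(M,J_0)\not\cong E_{\infty}(M,J_0)$. This is exactly the assertion of the corollary.

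There is essentially no obstacle beyond bookkeeping: every analytic ingredient — holomorphicity of the family $(M,J_a)_{a\in\Delta}$, invariance of all the $J_a$, and their appearance in Table 1 — was already established in Theorem~\ref{counterexample}, and the jump in the Fr\"olicher sequence is then a purely algebraic consequence of Theorems~\ref{degenera} and~\ref{no-degenera} via \eqref{iso-Frol-sequence}. The only point worth double checking is that the normalizations carried out at the end of the proof of Theorem~\ref{counterexample} really do land each $J_a$ with $a\neq 0$ among the representatives of $\frh_4$ listed in Table 1, so that Theorem~\ref{degenera} applies verbatim; this is immediate since those equations are precisely of the form \eqref{J-nilp} with $\rho=1$ and $D\neq 0$.
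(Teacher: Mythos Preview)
Your proposal is correct and follows exactly the same approach as the paper, which derives the corollary as an immediate consequence of \eqref{iso-Frol-sequence} together with Theorems~\ref{degenera} and~\ref{counterexample}. Your write-up simply unpacks this in more detail; the only remark is that the final check on $D\neq 0$ is not strictly needed, since Theorem~\ref{degenera}(b) requires only that $J_a$ be non-abelian, which is already established in Theorem~\ref{counterexample}.
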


The upper semicontinuity of the Hodge numbers is crucial in the proof of the openness of the property ``$E_1\cong E_{\infty}$''.
Since we proved in Corollary~\ref{no-semi-continuous} that the upper semicontinuity fails for $E_2^{p,q}$, the following result
is not so unexpected.

\begin{corollary}\label{nonopenE2}
The property of \emph{``the Fr\"olicher spectral sequence degenerating at $E_2$''}
is not open.
\end{corollary}

\begin{proof}
The family $J_t$ given in Example~\ref{example-h15} satisfies
$E_2(J_{-\frac{\pi}{2}})\cong E_{\infty}(J_{-\frac{\pi}{2}})$, because $J_{-\frac{\pi}{2}}$ is in case (f.1) of Theorem~\ref{Fro-general},
but $E_2(J_{t})\not\cong E_{\infty}(J_{t})$
for $t\in(-\frac{\pi}{2},\frac{\pi}{2})$.
\end{proof}

This result is relevant in relation to Question~\ref{open-q} since the existence of sG metric is an open property.
Notice that there is no contradiction because the Lie algebra in Example~\ref{example-h15} is $\frh_{15}$ which does not admit any sG metric.

\bigskip

\noindent {\bf Acknowledgments.}
We would like to thank D. Popovici for very useful comments concerning deformation stability of the sG and the balanced properties.
At the time of writing the version
of June 2012 including Theorem~\ref{counterexample},
we learned that M. Verbitsky may also have found independently a counterexample to the closedness conjecture for the
sG property.
This work has been partially
supported through Projects MICINN (Spain) MTM2008-06540-C02-02, MTM2010-19336 and MTM2011-28326-C02-01.
Raquel Villacampa would like to thank the IMUS (Instituto de Matem\'aticas de la Universidad de Sevilla Antonio de Castro Brzezicki) for having
awarded her with a post-doctoral stay in Seville.
Finally, we also thank the referee for many useful comments and suggestions that have helped us to improve the final version of the paper.

\smallskip


\begin{thebibliography}{33}

\bibitem{AB} L. Alessandrini, G. Bassanelli, Small deformations of a class of compact non-K\"ahler manifolds,
\emph{Proc. Amer. Math. Soc.} {\bf 109} (1990), 1059--1062.

\bibitem{ABD} A. Andrada, M.L. Barberis, I.G. Dotti, Classification of abelian complex structures on 6-dimensional Lie algebras,
\emph{J. Lond. Math. Soc.} {\bf 83} (2011), 232--255;
Corrigendum: \emph{J. Lond. Math. Soc.} {\bf 87} (2013), 319--320.

\bibitem{AT} D. Angella, A. Tomassini, On the $\partial \db$-Lemma and Bott-Chern cohomology,
\emph{Invent. Math.} {\bf 192} (2013), 71--81.

\bibitem{BDV} M.L. Barberis, I. Dotti, M. Verbitsky, Canonical bundles of complex nilmanifolds, with
applications to hypercomplex geometry, \emph{Math. Res. Lett.} {\bf 16} (2009), 331--347.

\bibitem{Be} F.A. Belgun, On the metric structure of non-K\"ahler complex surfaces,
\emph{Math. Ann.} {\bf 317} (2000), 1-–40.

\bibitem{CP} R. Cleyton, Y.S. Poon, Differential Gerstenhaber algebras associated to nilpotent algebras,
\emph{Asian J. Math.} {\bf 12} (2008), 225--249.

\bibitem{CF} S. Console, A. Fino, Dolbeault cohomology of compact nilmanifolds,
\emph{Transform. Groups} {\bf 6} (2001), 111--124.

\bibitem{CFP} S. Console, A. Fino, Y.S. Poon, Stability of abelian complex structures,
\emph{Int. J. Math.} {\bf 17} (2006), 401--416.

\bibitem{CFG1} L.A. Cordero, M. Fern\'andez, A. Gray, The Fr\"olicher spectral sequence and
complex compact nilmanifolds, \emph{C.R. Acad. Sci. Paris S\'er. I Math.} {\bf 305} (1987), 753--756.

\bibitem{CFG2} L.A. Cordero, M. Fern\'andez, A. Gray, The Fr\"olicher spectral sequence
for compact nilmanifolds, \emph{Illinois J. Math.} {\bf 35} (1991), 56--67.

\bibitem{CFGU0} L.A. Cordero, M. Fern\'andez, A. Gray, L. Ugarte, A general description of the terms in
the Fr\"olicher spectral sequence, \emph{Differential Geom. Appl.} {\bf 7} (1997), 75--84.

\bibitem{CFGU1bis}
L.A. Cordero, M. Fern\'andez, A. Gray, L. Ugarte,
Fr\"olicher spectral sequence of compact nilmanifolds with nilpotent complex structure,
New developments in differential geometry, Budapest 1996, 77–102, Kluwer Acad. Publ., Dordrecht, 1999.

\bibitem{CFGU2}
L.A. Cordero, M. Fern\'andez, A. Gray, L. Ugarte, Compact
nilmanifolds with nilpotent complex structure: Dolbeault cohomology,
\emph{Trans. Amer. Math. Soc.} {\bf 352} (2000), 5405--5433.

\bibitem{ES} M.G. Eastwood, M.A. Singer,
The Fr\"olicher spectral sequence on a twistor space,
\emph{J. Differential Geom.} {\bf 38} (1993), 653--669.

\bibitem{FG} A. Fino, G. Grantcharov, Properties of manifolds with
skew-symmetric torsion and special holonomy, \emph{Adv. Math.} {\bf 189}
(2004), 439--450.

\bibitem{FPS} A. Fino, M. Parton, S. Salamon, Families of strong
KT structures in six dimensions, \emph{Comment. Math. Helv.}  {\bf 79}
(2004), 317--340.

\bibitem{Fro} A. Fr\"olicher, Relations between the cohomology groups
of Dolbeault and topological invariants, \emph{Proc. Nat. Acad. Sci. USA} {\bf 41} (1955), 641--644.

\bibitem{Gau} P. Gauduchon, La $1$-forme de torsion d'une vari\'et\'e hermitienne compacte,
\emph{Math. Ann.} {\bf 267} (1984), 495-–518.

\bibitem{KS} G. Ketsetzis, S. Salamon, Complex structures on the Iwasawa manifold,
\emph{Adv. Geom.} {\bf 4} (2004), 165--179.

\bibitem{MPPS} C. Maclaughlin, H. Pedersen, Y.S. Poon, S. Salamon,
Deformation of 2-step nilmanifolds with abelian complex structures,
\emph{J. Lond. Math. Soc.} {\bf 73} (2006), 173--193.

\bibitem{Pittie} H.V. Pittie, The nondegeneration of the Hodge-de Rham spectral sequence,
\emph{Bull. Amer. Math. Soc.} {\bf 20} (1989), 19--22.

\bibitem{Pop0} D. Popovici, Deformation limits of projective manifolds:
Hodge numbers and strongly Gauduchon metrics,
\emph{Invent Math.} {\bf 194} (2013), 515--534.

\bibitem{Pop1} D. Popovici, Stability of strongly Gauduchon manifolds under modifications,
\emph{J. Geom. Anal.} {\bf 23} (2013), 653--659.

\bibitem{Pop2} D. Popovici, Deformation openness and closedness of various classes
of compact complex manifolds; Examples, \emph{Ann. Sc. Norm. Super. Pisa Cl. Sci.} (5) Vol. XIII {\bf 341} (2014), 255--305.

\bibitem{R0} S. Rollenske, The Fr\"olicher spectral sequence can be arbitrarily non-degenerate,
\emph{Math. Ann.} {\bf 341} (2008), 623--628.

\bibitem{R1} S. Rollenske, Lie-algebra Dolbeault-cohomology and small deformations of nilmanifolds,
\emph{J. Lond. Math. Soc.} {\bf 79} (2009), 346--362.

\bibitem{R2} S. Rollenske, Geometry of nilmanifolds with left-invariant
complex structure and deformations in the large, \emph{Proc. Lond. Math.
Soc.} {\bf 99} (2009), 425--460.

\bibitem{S} S. Salamon, Complex structures on nilpotent Lie algebras,
\emph{J. Pure Appl. Algebra} {\bf 157} (2001), 311--333.

\bibitem{Tanre} D. Tanr\'e, Mod\`ele de Dolbeault et fibr\'e holomorphe,
\emph{J. Pure Appl. Algebra} {\bf 91} (1994), 333--345.

\bibitem{U} L. Ugarte, Hermitian structures on six dimensional
nilmanifolds, \emph{Transform. Groups} {\bf 12} (2007), 175--202.

\bibitem{UV1} L. Ugarte, R. Villacampa, Non-nilpotent complex geometry of nilmanifolds and heterotic
supersymmetry, \emph{Asian J. Math.} {\bf 18} (2014), 229--246.

\bibitem{UV2} L. Ugarte, R. Villacampa, Balanced Hermitian geometry on 6-dimensional nilmanifolds,
to appear in \emph{Forum Math.}

\end{thebibliography}
\end{document}